\g@addto@macro\bfseries{\boldmath}
\newtheorem{theorem}{Theorem}
\newtheorem{lemma}{Lemma}[section]
\newtheorem{corollary}[lemma]{Corollary}
\newtheorem{proposition}[lemma]{Proposition}
\theoremstyle{definition}
\newtheorem{remark}[lemma]{Remark}
\newtheorem{definition}[lemma]{Definition}
\newtheorem*{korollar*}{Corollary}
\newcommand{\NN}{\mathbb{N}}
\newcommand{\RR}{\mathbb{R}}
\newcommand{\CC}{\mathbb{C}}
\newcommand{\cS}{\mathcal S}
\newcommand{\cE}{\mathcal E}
\newcommand{\cO}{\mathcal O}
\newcommand{\cF}{\mathcal F}
\newcommand{\cD}{\mathcal D}
\newcommand{\cH}{\mathcal H}
\newcommand{\cC}{\mathcal C}
\newcommand{\fg}{\mathfrak g}
\newcommand{\fh}{\mathfrak h}
\newcommand{\fr}{\mathfrak r}
\newcommand{\fm}{\mathfrak m}
\newcommand{\fn}{\mathfrak n}
\newcommand{\fz}{\mathfrak z}
\newcommand{\fw}{\mathfrak w}
\DeclareMathOperator{\im}{Im}
\DeclareMathOperator{\Id}{Id}
\DeclareMathOperator{\re}{Re}
\DeclareMathOperator{\wf}{WF}
\DeclareMathOperator{\tr}{Tr}
\DeclareMathOperator{\supp}{supp}
\DeclareMathOperator{\ac}{AC}
\DeclareMathOperator{\Ad}{Ad}
\DeclareMathOperator{\ad}{ad}
\DeclareMathOperator{\Ind}{Ind}
\newcommand{\matz}[4]{\left(\begin{array}{cc}
                             #1&#2\\
                             #3&#4
                            \end{array}\right)}
\newcommand{\matd}[9]{\left(\begin{array}{ccc}
                             #1&#2&#3\\
                             #4&#5&#6\\
                             #7&#8&#9
                            \end{array}\right)}
\newcommand{\Abb}[4]{\begin{cases}\begin{aligned} #1 & \rightarrow  #2 \\ #3 &\mapsto  #4\end{aligned}\end{cases}}
\title{Wave Front Sets of Nilpotent Lie Group Representations}
\author{Julia Budde and Tobias Weich}
\email{jbudde@math.uni-paderborn.de, weich@math.uni-paderborn.de}
\begin{document}
\begin{abstract}
Let $G$ be a nilpotent, connected, simply connected Lie group with Lie algebra $\mathfrak g$, and $\pi$ a unitary representation of $G$. In this article we prove that the wave front set of $\pi$ coincides with the asymptotic cone of the orbital support of $\pi$, i.e. $\mathrm{WF}(\pi)=\mathrm{AC}(\bigcup_{\sigma\in \mathrm{supp}(\pi)}\mathcal O_\sigma)$, where $\mathcal O_\sigma\subset i\mathfrak g^\ast$ is the coadjoint Kirillov orbit associated to the irreducible unitary representation $\sigma\in \hat{G}$.
\end{abstract}

\maketitle

\tableofcontents

\section{Introduction}
The concept of wave front sets was introduced by Sato and Hörmander. Given a distribution $u\in \mathcal D'(M)$ its wave front set is a closed conical subset $\wf(u)\subset T^*M$ that encodes the singularities of the distributions $u$. Informally speaking one can consider the wave front set as those directions in which the distribution is not smooth (in a $C^\infty$ sense). Wave front sets are extensively used in PDE theory as a very concise measure of singularities. For example Hörmanders famous theorem about propagation of singularities is formulated in terms of wave front sets.

The concept of the wave front set for a unitary Lie group representation was introduced by Howe \cite{howe}\footnote{For compact Lie groups a very similar concept based on the analytic instead of the $C^\infty$ regularities was introduced slightly before by Kashiwara and Vergne in \cite{KV79}}. Given a Lie group $G$ with Lie algebra $\fg$ and a unitary representation $(\pi,\mathcal H)$ the wave front set of the representation yields a closed $\Ad^*(G)$-invariant cone $\wf(\pi)\subset i\fg^*$. Informally speaking it captures the singular directions of all matrix coefficients of $\pi$ (see Definition~\ref{def:WFpi} for a precise definition). The remarkable property of $\wf(\pi)$ is that it is defined entirely in terms of singularities of matrix coefficients but it captures essential information of the spectral measure of $\pi$. This relation can be expressed by certain \emph{wave front-orbital support (WFOS) theorems} which we want to explain next: Suppose that the Lie group $G$ is of type I such that we can write any unitary representation $(\pi, \mathcal H)$ as a direct integral $\pi= \int^\oplus_{\hat G} \sigma^{m(\sigma)}\mu_\pi(\sigma)$ where $\hat G$ is the unitary dual endowed with the Fell topology and $\mu_\pi$ a Borel measure on $\hat G$, the spectral measure of $\pi$. Suppose furthermore that there is a canonical way to associate to any $\sigma\in \supp\mu_\pi\subset \hat G$ a coadjoint orbit $\mathcal O_\sigma\subset i\fg^*$ (or possibly a finite collection of such orbits), then we define the \emph{orbital support} to be
\begin{equation}\label{def:orbital_support}
\cO-\supp(\pi) := \bigcup_{\sigma\in \supp(\mu_\pi)}\cO_\sigma\subset i\fg^*.
\end{equation}
Furthermore, we define for any subset $S\subset i\fg^*$ its asymptotic cone
\[
 \ac(S) :=\{\xi\in i\fg^*|\mathcal C \text{ an open cone containing }\xi \Rightarrow S\cap\mathcal C\text{ unbounded }\}\cup\{0\}.
\]
A Wave front-orbital support theorem is then a theorem that states (for a suitable class of Lie groups $G$ and unitary representations $(\pi, \mathcal H)$) the equality
\begin{equation}
 \label{eq:wf_plancherel}
 \wf(\pi) = \ac(\cO-\supp(\pi))
\end{equation}

and thus connects the wave front set to the asymptotic support of the spectral measure. For abelian Lie groups the WFOS-theorem is just a reflection of the definition of the wave front set and Fourier inversion formulas as had been noted by Howe \cite{howe}. For non-commutative Lie groups the relation is much more subtle and has been shown for compact groups by Kashiwara-Vergne \cite{KV79}\footnote{with their slightly different notion of wavefront set, as mentioned above} and Howe~\cite{howe}. Much more recently Harris, He and \'Olafsson \cite[Theorem 1.2]{harrisheolaf} have shown a WFOS-theorem for real reductive algebraic groups $G$ and unitary representations $\pi$ which are weakly contained in the tempered representations (see \cite{BenII, BenYoshiki, BenYoshiki22} for follow up works that aim to weaken the temperedness assumption).

The practical purpose of WFOS-theorems is that they connect the spectral measure $\mu_\pi$ of general unitary representations to the wave front set of $\pi$. While the former is in general very difficult to determine, the latter has been shown to be explicitly calculable in very general settings. For example if $G$ is an arbitrary Lie group  and $H\subset G$ a closed subgroup such that $G/H$ carries a non-vanishing $G$-invariant smooth density then one can consider the regular representation of $G$ on $L^2(G/H)$. While determining the exact spectral measure (i.e. the Plancherel measure) of $L^2(G/H)$ is in general extremely difficult and so far only known for certain classes of homogeneous spaces, the wave front set of $L^2(G/H)$ is known \cite[Theorem 2.1]{harrisweich}  without any further assumptions 

\[
 \wf(L^2(G/H)) = \overline{ \Ad^*(G)i(\fg/\fh)^*}.
\]
Similar identities have also been derived for certain classes of induced representations \cite[Theorem 2.2 and 2.3]{harrisweich} and also the behavior of wave front sets under restrictions is rather well understood \cite[Prop 1.5]{howe}\cite[Corollary 1.4]{harrisheolaf}. Combining the explicit knowledge of $\wf(L^2(G/H))$ with a WFOS-theorem one can then deduce results about the Plancherel measures, e.g. existence of discrete series (see e.g. \cite[Example 7.5]{harrisweich}\cite[Theorem 21.1]{DKKS18}\cite{BenYoshiki22}).

In contrast to the knowledge about $\wf(L^2(G/H))$ that is known without any structural assumptions on $G$ and only mild assumptions on the quotient $G/H$, the cases in which WFOS-theorems are established are rather limited (abelian \cite{howe}, compact \cite{howe, KV79} and real reductive groups \cite{harrisheolaf} as mentioned above). One might hope that they can be proven for any class of Lie groups where a suitable relation between unitary irreducible representations and coadjoint orbits is established, for example in the setting of real linear algebraic group (see e.g. \cite{Duf10}). The purpose of this article is to establish a WFOS-theorem for nilpotent Lie groups. We prove

\begin{theorem}\label{thm:wf=ac}
 Let $G$ be a nilpotent, connected, simply connected Lie group and $\pi$ a unitary representation of $G$.  Then
 \begin{eqnarray*}
 	\wf(\pi) = \ac(\cO-\supp\pi).
 \end{eqnarray*}
Where the orbit support	\eqref{def:orbital_support} is defined by the Kirillov orbits $\mathcal O_\sigma\subset i\fg^*$ of the unitary irreducible representation $\sigma$.
\end{theorem}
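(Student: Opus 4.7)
The plan is to prove the two inclusions $\wf(\pi)\subset \ac(\cO-\supp\pi)$ and $\ac(\cO-\supp\pi)\subset \wf(\pi)$ separately, using the Kirillov orbit method, which for a nilpotent, connected, simply connected Lie group $G$ provides a homeomorphism $\hat G\cong i\fg^*/\Ad^*(G)$. As a preliminary step I would verify the theorem in the irreducible case: for $\sigma=\sigma_\xi\in\hat G$, realize $\sigma=\Ind_H^G\chi_\xi$ from a polarization $\fh\subset\fg$ at $\xi$, and combine the induced-representation wave-front formula of Harris--Weich \cite{harrisweich} with the identity $\Ad^*(G)(\xi+i\fh^\perp)=\mathcal O_\xi$ to conclude $\wf(\sigma_\xi)=\ac(\mathcal O_\xi)$.

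For $\ac(\cO-\supp\pi)\subset\wf(\pi)$, weak containment of each $\sigma\in\supp(\mu_\pi)$ in $\pi$ combined with monotonicity $\wf(\sigma)\subset\wf(\pi)$ (\cite{howe}) and the irreducible case yields $\overline{\bigcup_{\sigma}\ac(\mathcal O_\sigma)}\subset\wf(\pi)$. However, $\ac(\cO-\supp\pi)$ is in general strictly larger than this closure --- for instance for a direct integral of characters $\chi_{\xi_n}$ with $\xi_n\to\infty$, each $\ac(\{\xi_n\})$ is $\{0\}$ while $\ac(\cO-\supp\pi)$ captures the limit direction. To recover these transverse asymptotic directions I would invoke the continuity of the Kirillov map together with the closedness of coadjoint orbits in the nilpotent setting, approximating general asymptotic sequences in $\cO-\supp\pi$ by sequences sitting inside single, suitably translated orbits.

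For the reverse inclusion $\wf(\pi)\subset\ac(\cO-\supp\pi)$, I would represent smooth matrix coefficients of $\pi$ as oscillatory integrals over $\cO-\supp\pi$. Decomposing $v=\int^\oplus v_\sigma\,d\mu_\pi(\sigma)$ and applying the Kirillov character formula orbit by orbit, the matrix coefficient $\langle\pi(\exp X)v,v\rangle$ becomes, after Fubini,
\[
\int_{\cO-\supp\pi} e^{i\langle\xi,X\rangle}\,A_v(\xi)\,d\nu(\xi)
\]
for a measure $\nu$ on $\cO-\supp\pi$ and a smooth density $A_v$. The wave front set of such a Fourier transform at the identity is contained in $\ac(\supp(A_v\,d\nu))\subset\ac(\cO-\supp\pi)$, giving the desired inclusion.

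The principal obstacle lies in making the oscillatory integral representation of the previous paragraph rigorous across the direct integral: the orbits $\mathcal O_\sigma$ vary in dimension as $\sigma$ ranges over $\supp(\mu_\pi)$, so one needs a stratification of $\hat G$ (e.g.\ via Pukanszky's layers) together with a uniform Kirillov character formula on each stratum in order to glue the orbit-by-orbit integrals into a single smoothly-varying density on $\cO-\supp\pi$. Combined with the delicate asymptotic-cone analysis needed to deal with the transverse directions in the first inclusion, this is where the detailed structure theory of nilpotent coadjoint orbits enters in an essential way.
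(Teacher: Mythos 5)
Your outline founders at both inclusions, and in each case the missing ingredient is exactly what the paper has to work hard to supply. For $\wf(\pi)\subset\ac(\cO-\supp\pi)$ you propose to write $\langle\pi(\exp X)v,v\rangle$ as an oscillatory integral $\int e^{i\langle\xi,X\rangle}A_v(\xi)\,d\nu(\xi)$ with $\nu$ supported on $\cO-\supp\pi$, ``applying the Kirillov character formula orbit by orbit.'' But Kirillov's formula is a statement about the distributional character $\tr\sigma(f)$, not about individual matrix coefficients: the Fourier transform of $\langle\sigma_l(\exp X)u,v\rangle$ is \emph{not} a measure carried by $\cO_l$ (it has nontrivial tails off the orbit), so the claimed representation with a density on the orbital support simply does not exist. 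This is precisely the reason the authors abandon the character-formula (Harris--He--\'Olafsson) route -- for nilpotent groups the analogue of the invariant-integral machinery produces extra singularities -- and instead prove a quantitative substitute: Proposition~\ref{prop:detaO}, a decay estimate $\langle d(B_{\varepsilon\|\eta\|}(\eta),\cO_l)\rangle^{-N}$ for the cut-off Fourier transform of \emph{every} matrix coefficient, with constants uniform in $l$, established by induction on $\dim\fg$ through the two Kirillov cases of Theorem~\ref{thm:sigmaOg0}; uniformity is what allows integration against $d\mu_\pi$ afterwards. Your fallback for the irreducible case is also not quotable: since $\chi_\xi$ is a character of the polarizing subgroup, $\wf(\chi_\xi)=\{0\}$, so any Harris--Weich-type formula expressed through $\wf$ of the inducing representation forgets $\xi$ and cannot produce $\ac(\cO_\xi)$ (already for the Heisenberg group it yields a line where the answer is a plane), while the Pukanszky identity $\Ad^\ast(G)(\xi+i\fm^\perp)=\cO_\xi$ gives a non-conical set, so ``$\wf(\sigma_\xi)=\ac(\cO_\xi)$'' is not a corollary of those theorems but is essentially the statement to be proved.

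For $\ac(\cO-\supp\pi)\subset\wf(\pi)$ you correctly identify that weak containment plus monotonicity only yields $\overline{\bigcup_\sigma\ac(\cO_\sigma)}$, which misses the directions created by unboundedness \emph{across} the family of orbits, but your proposed repair -- ``approximating general asymptotic sequences by sequences sitting inside single, suitably translated orbits'' via continuity of the Kirillov map and closedness of orbits -- is not an argument: the orbits available are only those of $\supp\mu_\pi$ (they cannot be translated), and in your own example of characters $\xi_n\to\infty$ every single orbit contributes nothing, so no soft continuity statement can recover the limit direction. What is actually needed, and what the paper does, is a uniform \emph{lower} bound: Proposition~\ref{prop:ReMcoeffsgeq} constructs, measurably in $\zeta$, unit vectors whose matrix coefficient has $\re\,\cF\geq 2^{-3n}\int\phi$ for cutoffs supported at the scale $\varepsilon\langle\|\zeta\|\rangle^{-1/2}$; one then smears these vectors over positive-$\mu_\pi$-measure neighborhoods $N_m$ in the cross-section $\Sigma_d$ to get genuine vectors in $\cH_\pi$, and contradicts rapid decay using the refined Howe criterion of Lemma~\ref{lem:HoweKritsPhi}, whose explicit dependence $C_N\|\phi\|_{W^{N+n,1}}$ on the cutoff is essential because the cutoffs shrink with $\|\zeta\|$. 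None of this quantitative, uniform-in-$\zeta$ structure (nor the preliminary reduction to finitely many orbit types via the cross-section of Theorem~\ref{thm:AbkParamO}) is present in your proposal, and without it both inclusions remain open.
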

It was rather surprising to us, that the proof strategy of \cite{harrisheolaf} could not be transferred to the setting of nilpotent Lie groups. A central object in the proof of the Wave front-Plancherel theorem in \cite{harrisheolaf} was the analysis of integrated characters\footnote{Such integrated characters had before been introduced and used in the context of restriction problems by Kobayashi \cite{ToshiI, ToshiII, ToshiIII}.} $\int_{\hat G} \chi_\sigma f(\sigma)d\mu_\pi(\sigma)\in \mathcal D'(G)$ where $\chi_\sigma\in \mathcal D'(G)$ is the distributional character of the tempered irreducible representation $\sigma$. Harris, He and \'Olaffson then use character formulas of Duflo and Rossmann as well as Harish-Chandra's invariant integrals to relate the wave front set of the integrated characters to the asymptotic orbital support. While Kirillov's character formula provides a natural (and even simpler) replacement to the Duflo-Rossmann formula, the analog to the Harish-Chandra invariant integrals for nilpotent groups produces additional singularities which make the proof break down (see \cite[Section 5.1]{budde21} for a detailed discussion of the occurring problems). We therefore had to establish an alternative method to prove the above result. Instead of working with integrated characters and character formulas we directly work with matrix coefficients. In contrast to the characters, the Fourier transform of individual matrix coefficients of an irreducible representation are not supported on the coadjoint orbits. However we can show (Proposition~\ref{prop:ReMcoeffsgeqSimpl} and Proposition~\ref{prop:detaO}) that they are microlocally supported ``near'' the orbit and that the precise meaning of ``near'' can be made uniform about all unitary representations. Our proof of these key propositions is based on concrete microlocal estimates on induced representations. The induction scheme hereby is similar to the induction in the traditional proof of Kirillov's character formula.

\emph{Acknowledgments}
We thank Benjamin Harris, Joachim Hilgert, Jan Frahm and Clemens Weiske for many encouraging discussions and helpful remarks and suggestions. 
This project has received funding from Deutsche Forschungsgemeinschaft (DFG) (Grant No. 422642921  Emmy Noether group ``Microlocal Methods for Hyperbolic Dynamics'') as well as from SFB-TRR 358/1 2023 — (Grant No. 491392403) (CRC ``Integral Structures in Geometry and Representation Theory'').

\section{Preliminaries}
\subsection{Wave Front Sets}\label{sec:wf_prelim}
In this section we give definitions of the wave front set of a distribution and of a unitary Lie group representation and provide some facts about these objects that we will use later in the article.

Let $W$ be a real, finite-dimensional vector space and fix a Lebesgue measure $dx$ on $W$.
We define the Fourier transform as the map   $\cF: \cS(W)\to\cS(iW^\ast)$ between Schwartz spaces with
\begin{align*}
	\cF(\varphi)(\zeta):=\int_W \varphi(x)e^{-2\pi\langle \xi,x\rangle}~dx, \quad \xi\in iW^\ast,
\end{align*}
and for a tempered distribution $u\in\cS'(W)$ as $\cF(u)\in\cS'(iW^\ast)$ with
$\cF(u)(\psi):=u(\cF(\psi))$ for  $\psi\in\cS(iW^\ast)$. 
The inversion formula for $\cF:\cS(W)\to\cS(iW^\ast)$ gives us
\begin{align*}
	\cF^{-1}:\cS(iW^\ast)\to\cS(W), \quad \psi\mapsto \left(x\mapsto\int_{iW^\ast} \psi(\xi)e^{2\pi\langle\xi,x\rangle}~d\xi\right)
\end{align*}
for a suitable measure $d\xi$ on $iW^\ast$.

In addition to that, we define the Fourier transform of a distribution $v\in\cE'(W)$ with compact support to be	
\begin{align*}
	\cF(v)(\xi):= v\left\lbrack e^{-2\pi \langle \xi,\bullet\rangle}\right\rbrack, \quad \xi\in iW^\ast.
\end{align*}

\begin{definition} \label{def:WFvs}
Let $W$ be a real, finite-dimensional vector space and $u\in \cD'(X)$ a distribution on an open subset $X\subset W$. Then we say $(x_0,\xi_0)\in X\times iW^\ast\setminus \{0\} \subset iT^\ast X$ is \emph{not} in the \emph{wave front set} $\operatorname{WF}(u)\subset iT^\ast X$ if there exist open neighborhoods $U$ of $x_0$ and $V$ of $\xi_0$ and a smooth compactly supported function $\phi\in C_c^{\infty}(U)$ with $\phi(x_0)\neq 0$ such that for all $N\in\mathbb{N}$ there exists a constant $C_{N,\phi}>0$ such that
$$|\mathcal{F}(\phi u)(\tau \xi)|\leq C_{N,\phi} |\tau|^{-N}\quad \forall\, \tau\gg 0, \xi\in V.$$
\end{definition}
Note that  $(x,0)\in iT^\ast X$ is never in the wave front set (contrary to Definition~\ref{def:WFpi} for unitary representations) because in order to analyze the singularities of a function or distribution it only makes sense to look in the directions $\xi\neq 0$. \\
Furthermore, it is easily seen from the definition that the wave front set $\operatorname{WF}(u)\subset iT^\ast X$ is a closed cone (in the second component).

Now, if $\psi:X\to Y$ is a diffeomorphism between two open sets and $u$ is a distribution on $Y$, then	$\psi^\ast \wf(u)=\wf(\psi^\ast u)$,
where the pullback on the cotangent bundle is defined by
\begin{align*}
	\psi^\ast(y,\xi)=\left( \psi^{-1}(y),(D\psi(\psi^{-1}(y)))^T\xi \right), \quad (y,\xi)\in iT^\ast Y.
\end{align*}
Thus, the notion of the wave front set of a distribution on a smooth manifold is independent of the choice of local coordinates and is therefore well-defined.

Now let $G$ be a $n$-dimensional Lie group with Lie algebra $\fg$ and $(\pi,\cH)$ a unitary representation of $G$.
Denote by  $J_1(\cH)$ the space of trace class operators with trace class norm $\|T\|_1$.

\begin{definition}\label{def:WFpi}
The \emph{wave front set of a unitary representation $\pi$} is defined as the closure of the union of the wave front sets at the identity of the matrix coefficients of $\pi$: 
\begin{align*}
	\wf(\pi) :=  \overline{\bigcup_{v,w\in\cH}\wf_e(\langle \pi(g)v,w\rangle_\cH)} \cup \{0\} \subset iT^\ast_e G \cong i\fg^\ast.
\end{align*}
Here we use the convention that zero is always in the wave front set (contrary to Definition~\ref{def:WFvs}) because it makes the statements of the results for unitary representations cleaner. \\
Howe used in \cite{howe} the equivalent definition 
\begin{align*}
	\wf(\pi) = \overline{\bigcup_{T\in J_1(\cH)}\wf_e(\tr_\pi(T))}\cup\{0\} ,
\end{align*}
where  $\tr_\pi(T):=\tr(\pi(\cdot)T)$, $T\in J_1(\cH)$, is a continuous bounded function on $G$ regarded as a distribution on $G$ by integration. The equivalence of these definitions was shown in \cite[Proposition 2.4]{harrisheolaf}.
\end{definition}

It is a well-known fact that the wave front set $\wf(\pi)\subset i\fg^\ast$ is a closed, $\Ad^\ast(G)$-invariant cone.

The following result provides another description of the wave front set which we will use in our proof.

\begin{lemma}[see {\cite[Theorem 1.4 v)]{howe}} and {\cite[Lemma 2.5 (iii)]{harrisheolaf}}]\label{lem:HoweKrits}~\\
Let $\pi\neq 0$ and $\xi_0 \in i\fg^\ast$. Then $\xi_0\notin \wf(\pi)$ if and only if there are an open set $e\in U\subset G$ on which the logarithm is a well-defined diffeomorphism onto its image and an open set $\xi_0\in V\subset i\fg^\ast$ such that for every $\phi\in C_c^\infty(U)$ there exists a family of constants $C_N(\phi)>0$ independent of both $\xi\in V$ and $T\in J_1(H)$, such that
\begin{align*}
		\left|\int_G \tr_\pi(T)(g)e^{-2\pi\tau\xi(\log g)}\phi(g)~dg\right| \leq C_N(\phi)\|T\|_1 \tau^{-N}
\end{align*}
for $\tau\gg 0$, $\xi\in V$, $T\in J_1(H)$.
\end{lemma}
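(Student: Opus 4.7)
\emph{Direction ($\Rightarrow$).} Assume $\xi_0\notin\wf(\pi)$. The first step is the key manipulation
\begin{align*}
\int_G \tr_\pi(T)(g)\,e^{-2\pi\tau\xi(\log g)}\phi(g)\,dg \;=\; \tr\bigl(A(\phi,\tau,\xi)\,T\bigr),
\end{align*}
where the bounded operator
\begin{align*}
A(\phi,\tau,\xi) \;:=\; \int_G \phi(g)\,e^{-2\pi\tau\xi(\log g)}\,\pi(g)\,dg \;\in\; \mathcal B(\cH)
\end{align*}
is obtained as a weak integral; the trace and integral may be interchanged using the continuity of $B\mapsto \tr(BT)$ on $\mathcal B(\cH)$ for trace class $T$ together with $\|\pi(f)\|_{\mathrm{op}}\le\|f\|_1$. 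By the standard pairing $|\tr(AT)|\le\|A\|_{\mathrm{op}}\|T\|_1$, the full assertion then reduces to the $T$-free operator-norm estimate
\begin{align*}
\|A(\phi,\tau,\xi)\|_{\mathrm{op}} \;\le\; C_N(\phi)\,\tau^{-N},\qquad \tau\gg 0,\ \xi\in V.
\end{align*}

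This operator-norm decay is precisely the equivalent reformulation of $\xi_0\notin\wf(\pi)$ furnished by Howe's \cite[Thm.~1.4 (v)]{howe}, and establishing it from the matrix-coefficient definition (Definition~\ref{def:WFpi}) is the main content and the expected main obstacle. The hypothesis only supplies, for each unit pair $v,w\in\cH$, rapid decay of $\cF(\phi\langle\pi(\cdot)v,w\rangle)(\tau\xi)$ uniformly in $\xi$ near $\xi_0$, but with constants a priori depending on $v,w$. To upgrade these pointwise bounds to a uniform operator-norm bound I would apply a smoothing/factorisation device in the spirit of Dixmier--Malliavin: writing $\phi=\sum_j \phi_j\ast\psi_j$ with $\phi_j,\psi_j\in C_c^\infty$ supported in a small neighbourhood of $e$, the operator $A(\phi,\tau,\xi)$ decomposes, modulo a rapidly decaying remainder coming from Baker--Campbell--Hausdorff corrections to the exponential factor, into a finite sum of terms in which the unit vectors $v$, $w$ are effectively replaced by images of the fixed bounded smoothing operators $\pi(\phi_j)$ and $\pi(\psi_j)^{\ast}$. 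On these bounded families of smooth vectors the matrix-coefficient decay becomes uniform, yielding the desired operator-norm bound.

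\emph{Direction ($\Leftarrow$).} Conversely, assume the uniform estimate holds on some $U$, $V$. Fix an arbitrary $T\in J_1(\cH)$. Pulling back via the $\log$-diffeomorphism on $U$ rewrites the integral as the Euclidean Fourier transform on $\fg$ of $\tilde\phi\cdot\widetilde{\tr_\pi(T)}$ evaluated at $\tau\xi$, where the tildes denote push-forward by $\log$ and the smooth exponential Jacobian is absorbed into $\tilde\phi$. The hypothesis is then literally the rapid-decay condition of Definition~\ref{def:WFvs}, so $\xi\notin\wf_e(\tr_\pi(T))$ for every $\xi\in V$ and every $T\in J_1(\cH)$. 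Since $V$ is open and independent of $T$, it remains disjoint from the union $\bigcup_T\wf_e(\tr_\pi(T))$ and hence from its closure, giving $\xi_0\notin\wf(\pi)$ (assuming $\xi_0\ne 0$, as $0$ is by our convention always contained in $\wf(\pi)$).
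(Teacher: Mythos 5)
The first thing to note is that the paper does not prove Lemma~\ref{lem:HoweKrits} at all: it is quoted directly from \cite[Theorem 1.4 v)]{howe} and \cite[Lemma 2.5 (iii)]{harrisheolaf}, and only the refinement Lemma~\ref{lem:HoweKritsPhi} is proved in the text. Measured against that, much of your proposal is consistent with the paper's treatment: the converse direction (pull back by $\log$, absorb the Jacobian into the cutoff, invoke the trace-class formulation of $\wf(\pi)$ from Definition~\ref{def:WFpi} and the openness of $V$) is correct, and your reduction of the forward direction to the operator-norm bound $\|A(\phi,\tau,\xi)\|_{\mathrm{op}}\leq C_N(\phi)\tau^{-N}$ via $|\tr(AT)|\leq\|A\|_{\mathrm{op}}\|T\|_1$ is sound; since that operator-norm decay is exactly what you then cite from Howe's Theorem 1.4 v), at this level you are doing what the paper does, namely deferring the crux to the literature.

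The gap appears the moment your Dixmier--Malliavin sketch is read as an actual derivation of that operator-norm decay from the matrix-coefficient definition. The constants (and even the neighborhoods $U,V$) in Definition~\ref{def:WFvs} depend on the individual pair $(v,w)$; writing $\phi=\sum_j\phi_j\ast\psi_j$ and replacing $v,w$ by $\pi(\psi_j)v$, $\pi(\phi_j)^\ast w$ does not remove this dependence, because nothing in the hypothesis makes the matrix-coefficient constants uniform over the bounded families $\{\pi(\psi_j)v:\|v\|\leq 1\}$ --- obtaining precisely that uniformity is the nontrivial content of Howe's Theorem 1.4 and of \cite[Proposition 2.4, Lemma 2.5]{harrisheolaf}, and it requires an additional functional-analytic argument (a Baire/uniform-boundedness type step), not just smoothing. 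Moreover, the claim that the Baker--Campbell--Hausdorff correction to the exponential phase contributes only ``a rapidly decaying remainder'' is unsupported: the phase discrepancy $\tau\bigl(\xi(\log(hk))-\xi(\log h)-\xi(\log k)\bigr)$ grows linearly in $\tau$ and is not small on the support of $\phi_j\otimes\psi_j$, so it cannot be dismissed as a remainder without a genuine stationary-phase analysis. In short: as a citation-based argument your proof matches the paper; as a self-contained proof the forward direction is incomplete at exactly the step that makes the lemma nontrivial.
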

For our proof in Section \ref{subsec:1stIncl} we need to know more about the dependence of the constant $C_N(\phi)$ on the cut-off function $\phi\in C_c^\infty(G)$.
\begin{lemma}\label{lem:HoweKritsPhi}
For all $N>n=\dim(G)$ the above statement holds with the choice of the constant $C_N(\phi)=C_N \|\phi\|_{W^{N+n,1}}$ where  $\|\phi\|_{W^{M,1}}:=\sum_{|\alpha|\leq M}\|D^\alpha\phi\|_{L^1}$ is a Sobolev norm.
\end{lemma}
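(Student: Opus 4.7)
The strategy is to apply Lemma~\ref{lem:HoweKrits} once, for a single fixed cutoff, and to transfer all dependence on $\phi$ into a Sobolev norm of $\phi$ via Fourier analysis. Given $\xi_0\notin\wf(\pi)$, let $U_0,V_0$ be the neighborhoods produced by Lemma~\ref{lem:HoweKrits}. Shrink to $e\in U\Subset U_0$ and $\xi_0\in V\Subset V_0$, and fix once and for all $\psi\in C_c^\infty(U_0)$ with $\psi\equiv 1$ on $\overline U$. Passing to exponential coordinates on $U_0$ and writing
\[
h_\sigma(X):=\tr_\pi(T)(\exp X)\,\sigma(\exp X)\,|J(X)|
\]
where $|J|$ is the Jacobian of $\exp$, the integral in Lemma~\ref{lem:HoweKrits} equals $\mathcal F(h_\sigma)(\tau\xi)$. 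Applied to the fixed $\psi$, the lemma supplies a constant $A_N$, independent of $T,\xi,\tau$, with $|\mathcal F(h_\psi)(\tau\xi)|\leq A_N\|T\|_1\tau^{-N}$ for all $\xi\in V_0$, $\tau\gg 0$.

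For a general $\phi\in C_c^\infty(U)$, set $\tilde\phi(X):=\phi(\exp X)$. Since $\psi\equiv 1$ on $\supp\phi$, one has $h_\phi=h_\psi\cdot\tilde\phi$, and the convolution theorem gives
\[
\mathcal F(h_\phi)(\tau\xi)\;=\;\int_{i\fg^\ast}\mathcal F(h_\psi)(\tau\xi-\eta)\,\mathcal F(\tilde\phi)(\eta)\,d\eta.
\]
Choose $c>0$ so small that $\xi\in V$ and $|\eta|\leq c\tau$ force $\xi-\eta/\tau\in V_0$, which is possible because $V\Subset V_0$; then $\tau\xi-\eta=\tau(\xi-\eta/\tau)$ lies in the regime controlled by the previous step. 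Split the convolution at $|\eta|=c\tau$: on the inner region Lemma~\ref{lem:HoweKrits} gives $|\mathcal F(h_\psi)(\tau\xi-\eta)|\lesssim\|T\|_1\tau^{-N}$, which can be pulled outside the integral; on the outer region use the crude bound $|\mathcal F(h_\psi)|\leq\|h_\psi\|_{L^1}\lesssim\|T\|_1$ and extract the required decay from the tail of $\mathcal F(\tilde\phi)$.

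The explicit $\phi$-dependence arises from the standard integration-by-parts inequality $(1+|\eta|)^M|\mathcal F(\tilde\phi)(\eta)|\lesssim\|\tilde\phi\|_{W^{M,1}}$. Taking $M=N+n$ yields $\int|\mathcal F(\tilde\phi)(\eta)|\,d\eta\lesssim\|\tilde\phi\|_{W^{N+n,1}}$, since $\int_{\RR^n}(1+|\eta|)^{-(N+n)}d\eta<\infty$ (this is where $N+n>n$ is used), and controls the inner region; similarly
\[
\int_{|\eta|>c\tau}|\mathcal F(\tilde\phi)(\eta)|\,d\eta\;\lesssim\;\tau^{n-(N+n)}\|\tilde\phi\|_{W^{N+n,1}}\;=\;\tau^{-N}\|\tilde\phi\|_{W^{N+n,1}},
\]
controls the outer tail. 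The smoothness of $\exp$ on the compact $\overline{U_0}$ ensures $\|\tilde\phi\|_{W^{N+n,1}}\lesssim\|\phi\|_{W^{N+n,1}}$, and summing the two contributions produces the claimed bound $|\mathcal F(h_\phi)(\tau\xi)|\leq C_N\|T\|_1\|\phi\|_{W^{N+n,1}}\tau^{-N}$ for $\xi\in V$, $\tau\gg 0$. The main technical balance is the choice of Sobolev index: $M=N+n$ is precisely what matches the outer tail rate $\tau^{n-M}$ to the desired decay $\tau^{-N}$, while keeping $\mathcal F(\tilde\phi)$ integrable on $\fg^\ast$.
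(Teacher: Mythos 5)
Your proposal is correct and essentially reproduces the paper's own proof: a fixed cutoff equal to $1$ on the support of $\phi$, the convolution identity for $\mathcal F\bigl(\tr_\pi(T)(\exp\cdot)\,\phi(\exp\cdot)\bigr)$, and a frequency splitting in which Lemma~\ref{lem:HoweKrits} (with its constant uniform in the direction) handles the region where the argument stays near the ray through $\xi_0$, while the $\|\phi\|_{W^{N+n,1}}$ tail bound on $\mathcal F(\phi\circ\exp)$ supplies the $\tau^{-N}$ decay elsewhere. Up to the substitution $\eta\mapsto\tau\xi-\eta$ and normalizing the direction instead of keeping $\tau'=\tau$, this is exactly the paper's $I_1$/$I_2$ decomposition.
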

\begin{proof}
We may assume without loss of generality that in Lemma \ref{lem:HoweKrits} $V=B_{2\epsilon}(\xi_0)$ for an $\frac 12 >\varepsilon>0$ and $\|\xi_0\|=1$, and may prove our statement for  $\xi\in V'\coloneq  B_{\epsilon}(\xi_0)$ with $\|\xi\|=1$.
Now, let $U\subset G$ be the open set given by Lemma \ref{lem:HoweKrits} and take $U'\subsetneqq U$ open and $\chi\in C_c^\infty(\log(U))$ a function on $\fg$ with $\chi=1$ on $\log(U')\subset\fg$. Then we can estimate for all $\phi\in C_c^\infty(U')$, $\varphi=\phi\circ\exp\in C_c^\infty(\fg)$:
\begin{align*}
I(\phi,\xi,T)(\tau)&:= \int_G \tr_\pi(T)(g)e^{-2\pi\tau\xi(\log g)}\phi(g)~dg 
=\int_\fg \tr_\pi(T)(\exp(X))e^{-2\pi\tau\xi(X)}\chi(X)\varphi(X)~dX \\
&= \int_{i\fg^\ast} \left(\int_\fg \tr_\pi(T)(\exp(X))\chi(X)e^{-2\pi\eta(X)} dX\right) \left(\int_\fg \varphi(Y)e^{2\pi(\eta-\tau\xi)(Y)} dY\right) d\eta,
\end{align*}
and define $J_1(\eta)\coloneq \int_\fg \tr_\pi(T)(\exp(X))\chi(X)e^{-2\pi\eta(X)} dX$ and $J_2(\eta)\coloneq \int_\fg \varphi(Y)e^{2\pi(\eta-\tau\xi)(Y)} dY$.
With the $\frac 12 >\varepsilon>0$ chosen above we split up the integral as $I(\phi,\xi,T)(\tau)=I_1+I_2$ where
\begin{align*}
I_1\coloneq \int_{\|\tau\xi-\eta\| \geq \varepsilon\tau} J_1(\eta)J_2(\eta) d\eta, \quad 
I_2\coloneq \int_{B_{\varepsilon\tau}(\tau\xi)} J_1(\eta)J_2(\eta) d\eta.
\end{align*}
For the first integral we estimate for $\eta\notin B_{\varepsilon\tau}(\tau\xi)$ by estimation of the integrand and partial integration, respectively
\begin{align*}
|J_1(\eta)|\leq \|T\|_1\|\chi\|_{L^1}, \quad 
|J_2(\eta)|\leq \|\varphi\|_{W^{N,1}} \|\tau\xi-\eta\|^{-N}
\end{align*}
and therefore
\begin{align*}
|I_1| &\leq \|T\|_1\|\chi\|_{L^1} \|\varphi\|_{W^{N,1}} \int_{\|\tau\xi-\eta\| \geq \varepsilon\tau} \|\tau\xi-\eta\|^{-N} d\eta
= \|T\|_1\|\chi\|_{L^1} \|\varphi\|_{W^{N,1}} \mathrm{vol}_{n-1}(S^{n-1})\int_{\varepsilon\tau}^\infty r^{-N}r^{n-1} dr \\
&=\frac{1}{N-n} \|T\|_1\|\chi\|_{L^1} \|\varphi\|_{W^{N,1}} \mathrm{vol}_{n-1}(S^{n-1})\varepsilon^{-N+n} \tau^{-N+n}.
\end{align*} 
For the second integral we estimate for $\eta\in  B_{\varepsilon\tau}(\tau\xi)$ with Lemma \ref{lem:HoweKrits} applied to $\chi$ and $\frac 1{\|\eta\|}\eta\in B_{2\epsilon}(\xi_0)=V$
\begin{align*}
|J_1(\eta)|\leq \|T\|_1 C_N(\chi)\|\eta\|^{-N}, \quad 
|J_2(\eta)|\leq \|\varphi\|_{L^1} \leq \|\varphi\|_{W^{N,1}}.
\end{align*}
Since $\|\eta\|\geq (1-2\varepsilon)\tau$ we have
\begin{align*}
|I_2| &\leq \|T\|_1 C_N(\chi) \|\varphi\|_{W^{N,1}} \int_{B_{\varepsilon\tau}(\tau\xi)} \|\eta\|^{-N}d\eta \\
&\leq \|T\|_1 C_N(\chi) \|\varphi\|_{W^{N,1}} ((1-2\varepsilon)\tau)^{-N} C (\varepsilon\tau)^n
\leq C_N \|T\|_1 \|\varphi\|_{W^{N,1}} \tau^{-N+n}.
\end{align*}
This proves the statement with $U'$ as the open neighborhood of $e\in G$ and $V'$ as the open neighborhood of $\xi_0$ in $i\fg^\ast$.
\end{proof}

Lastly, the following simple result gives us an idea why wave front sets might be interesting for the decomposition of unitary representations.

\begin{proposition}\label{prop:wfoplus}
Let $(\pi_1,\cH_1)$,\ldots, $(\pi_k,\cH_k)$ be unitary representations of $G$, then
\begin{eqnarray*}
\wf\left(\bigoplus_{j=1}^k\pi_j\right) = \bigcup_{j=1}^k \wf(\pi_j).
\end{eqnarray*}
\end{proposition}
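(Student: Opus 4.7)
The plan is to reduce the statement to the elementary observation that a matrix coefficient of $\pi = \bigoplus_{j=1}^k \pi_j$ is precisely a finite sum of matrix coefficients of the summands. Concretely, writing $\mathcal H = \bigoplus_j \mathcal H_j$ and decomposing $v = (v_1,\dots,v_k)$, $w = (w_1,\dots,w_k) \in \mathcal H$, one has
\begin{equation*}
  \langle \pi(g) v, w \rangle_{\mathcal H} \;=\; \sum_{j=1}^k \langle \pi_j(g) v_j, w_j \rangle_{\mathcal H_j}.
\end{equation*}
This identity is the only representation-theoretic input; the rest is a formal manipulation with wave front sets of distributions.

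For the inclusion $\bigcup_j \wf(\pi_j) \subset \wf(\pi)$ I would use that every matrix coefficient of $\pi_j$ is, via the inclusion $\mathcal H_j \hookrightarrow \mathcal H$, also a matrix coefficient of $\pi$ (take all components other than the $j$-th to be zero). Hence $\wf_e(\langle \pi_j(g) v_j, w_j\rangle) \subset \wf(\pi)$ for every $v_j, w_j$, and since $\wf(\pi)$ is closed and $0 \in \wf(\pi)$ by convention, we get $\wf(\pi_j) \subset \wf(\pi)$ for each $j$, whence the union is contained in $\wf(\pi)$.

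For the reverse inclusion $\wf(\pi) \subset \bigcup_j \wf(\pi_j)$ I would invoke the standard fact that the wave front set is subadditive: $\wf_e(f_1 + \cdots + f_k) \subset \bigcup_j \wf_e(f_j)$ for distributions $f_1,\dots, f_k$. Applying this to the decomposition of matrix coefficients above gives
\begin{equation*}
  \wf_e\bigl(\langle \pi(g) v, w\rangle\bigr) \subset \bigcup_{j=1}^k \wf_e\bigl(\langle \pi_j(g) v_j, w_j\rangle\bigr) \subset \bigcup_{j=1}^k \wf(\pi_j).
\end{equation*}
Since $\bigcup_{j=1}^k \wf(\pi_j)$ is a finite union of closed cones and hence already closed, taking the union over all $v,w\in\mathcal H$ and then the closure (and adjoining $\{0\}$, which is already present) preserves the inclusion, yielding $\wf(\pi) \subset \bigcup_j \wf(\pi_j)$.

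There is no real obstacle; the only small point worth being careful about is that the outer closure in Definition~\ref{def:WFpi} is harmless here because a finite union of closed sets is closed, which is why the statement uses a finite direct sum rather than a countable one (where the analogous identity would only hold up to closure).
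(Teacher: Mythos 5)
Your argument is correct: the decomposition of a matrix coefficient of $\bigoplus_j\pi_j$ as a finite sum of matrix coefficients of the $\pi_j$, the embedding of each $\cH_j$ into $\cH$ for one inclusion, subadditivity of the wave front set under finite sums for the other, and the observation that a finite union of closed cones is closed so the outer closure in Definition~\ref{def:WFpi} causes no loss. The paper states this proposition without proof, and your argument is exactly the standard one it implicitly relies on, including the correct caveat that finiteness of the sum is what makes the identity exact rather than valid only up to closure.
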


\subsection{Nilpotent Lie Groups}\label{sec:nilpotent_prelim}
In order to prove Theorem \ref{thm:wf=ac} we use the structure theory of nilpotent Lie algebras and Lie groups. The required results below are mostly from  the book by Corwin and Greenleaf \cite{corgre}.

Let $G$ be a nilpotent, connected, simply connected Lie group with Lie algebra $\fg$ of dimension $n$ and $\fg^\ast$ its vector space dual. By $\hat{G}$ we denote the unitary dual of $G$ and by $i\fg^\ast/G$ the space of coadjoint orbits.

The main results are the following two theorems:

\begin{theorem}[see {\cite[Theorems 2.2.1 - 2.2.4]{corgre}}]\label{thm:BIJ}
	There exists a homeomorphism $\hat{G} \to i\fg^\ast/G$, $\sigma\mapsto\cO_\sigma$, and $\sigma_l \leftmapsto \cO_l=\Ad^\ast(G)l$.
For the continuity of the map $i\fg^\ast/G\to\hat{G}$ see \cite[Theorem 8.2]{kirillov} and for the continuity of the map $\hat{G}\to i\fg^\ast/G$ see \cite{brown}.
\end{theorem}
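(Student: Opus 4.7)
The plan is to follow Kirillov's original proof by induction on $\dim \fg$, as carried out in \cite{corgre}. For each functional $l \in i\fg^\ast$ one chooses a \emph{polarizing subalgebra} $\fm \subset \fg$, that is, a subalgebra maximal with respect to the condition $l([\fm, \fm]) = 0$; existence is a linear-algebra construction using a Jordan--Hölder flag of ideals $0 = \fg_0 \subset \fg_1 \subset \cdots \subset \fg_n = \fg$, taking $\fm$ to be the sum of the radicals of the restrictions of $B_l(X,Y) := l([X,Y])$ to each $\fg_i$. Setting $M := \exp \fm$ and $\chi_l(\exp X) := e^{2\pi l(X)}$, the candidate representation is $\sigma_l := \Ind_M^G \chi_l$, which is unitary because $\chi_l$ is a unitary character on the closed connected subgroup $M$.

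The main task is to prove by induction on $\dim \fg$ that $\sigma_l$ is irreducible, independent up to unitary equivalence of the choice of polarization $\fm$, and that $\sigma_l \simeq \sigma_{l'}$ iff $l$ and $l'$ lie in the same coadjoint orbit. The inductive step exploits the fact that a nontrivial nilpotent Lie algebra has nontrivial center $\fz(\fg)$: if $l$ vanishes on some central line $\RR Z$, then $\sigma_l$ descends to $G/\exp(\RR Z)$ and the inductive hypothesis applies; otherwise one picks a codimension-one ideal $\fh \subset \fg$, writes $\fg = \fh \oplus \RR X$, and distinguishes the two standard cases according to whether $X$ lies in the radical $\fg(l)$ of $B_l$. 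In each case one deduces the three claims (irreducibility, polarization-independence, orbit parametrization) from the inductive hypothesis on $\fh$ together with induction in stages and Mackey's imprimitivity machinery applied to $\sigma_l|_H$. The equivalence $\sigma_l \simeq \sigma_{\Ad^\ast(g) l}$ is immediate by conjugating the polarization; the converse and the surjectivity of $l \mapsto \sigma_l$ onto $\hat G$ are then extracted from the same induction by tracking the central character.

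For the homeomorphism one must also verify continuity of both maps between $\hat G$ (with the Fell topology) and $i\fg^\ast/G$ (with the quotient topology). The forward direction $i\fg^\ast/G \to \hat G$, due to Kirillov \cite[Theorem 8.2]{kirillov}, follows from the concrete $L^2(G/M)$-realization of $\sigma_l$: one shows that $l_n \to l$ forces uniform convergence on compact subsets of $G$ of a suitable collection of matrix coefficients of $\sigma_{l_n}$ to those of $\sigma_l$, which is equivalent to Fell convergence. The reverse direction is Brown's theorem \cite{brown}.

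\textbf{Main obstacle.} The most delicate step is Brown's direction, the continuity of $\hat G \to i\fg^\ast/G$: since the Fell topology records only convergence of matrix coefficients, recovering convergence of the associated coadjoint orbits requires a careful inductive argument controlling how polarizations and orbit representatives can degenerate along a convergent net, together with a compactness argument in $i\fg^\ast$ modulo the coadjoint action. A secondary technical point in the inductive construction is proving polarization independence when the polarizing $\fm$ is not itself an ideal, which forces one to compare two inductions via an intertwining integral whose convergence and non-degeneracy are non-trivial; Vergne's canonical choice of polarization can be used to circumvent this, but one must still check that any other polarization yields an equivalent representation.
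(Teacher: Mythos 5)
Your outline is correct and follows essentially the same route as the paper, which does not prove this statement itself but cites it from Corwin--Greenleaf, Kirillov, and Brown: the inductive construction via polarizing subalgebras and the two central cases is exactly the argument of \cite[Theorems 2.2.1--2.2.4]{corgre}, and the two continuity statements are handled precisely as you indicate, by \cite[Theorem 8.2]{kirillov} and \cite{brown} respectively.
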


The structure and parametrization of the coadjoint orbits is given by

\begin{theorem}[see {\cite[Theorem 3.1.14]{corgre}}]\label{thm:AbkParamO}
Fix a (strong Malcev) basis $\{X_1,\ldots ,X_n\}$ of $\fg$. Then there exists a finite set $D$ of orbit types. Denote by $U_d\subset i\fg^\ast$ the union of all orbits of type $d\in D$. Moreover, all orbits in $U_d$ have the same dimension $d_n$. \\
For each $d\in D$ there also exists a cross-section $\Sigma_d\subset i\fg^\ast$ of the orbits in $U_d$, i.e. each orbit $\cO\subset U_d$ intersects $\Sigma_d$ in a unique point. Then 
\begin{align*}
\Sigma := \bigsqcup_{d\in D} \Sigma_d \cong i\fg^\ast/G
\end{align*}
is a cross-section of all $\Ad^\ast(G)$-orbits. \\
Furthermore, for each $d\in D$ there exists a decomposition 
\begin{align*}
i\fg^\ast = V_{S(d)}\oplus V_{T(d)}
\end{align*}
as a direct sum of vector spaces and a birational, non-singular, surjective map
\begin{align*}
	\psi_d\colon \Sigma_d \times V_{S(d)} \rightarrow U_d
\end{align*}						
such that for each $l\in\Sigma_d$ its orbit is given by $\cO_l=\psi_d\left(l,V_{S(d)}\right)$.
\end{theorem}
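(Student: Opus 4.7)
The plan is to follow the Pukanszky jump-index construction attached to the strong Malcev basis, essentially as carried out in Corwin--Greenleaf, Chapter 3. First I would form the ascending ideal flag $\fg_{(j)} = \spn\{X_1,\ldots,X_j\}$; because the basis is strong Malcev, each $\fg_{(j)}$ is an ideal and $[\fg,\fg_{(j)}] \subset \fg_{(j-1)}$. For $l \in i\fg^\ast$ define the jump set
\[
 e(l) = \{\, j : \fg^l + \fg_{(j-1)} \neq \fg^l + \fg_{(j)}\,\} \subset \{1,\ldots,n\},
\]
where $\fg^l$ denotes the coadjoint stabilizer. A standard count with the alternating form $B_l(X,Y) = l([X,Y])$ yields the key identity $|e(l)| = \dim \cO_l$. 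Since every $\fg_{(j)}$ is $\Ad(G)$-invariant, $e$ is constant along coadjoint orbits; and since $e$ takes only finitely many values, this produces the finite set $D$ of orbit types together with the partition $i\fg^\ast = \bigsqcup_{d\in D} U_d$, $U_d = e^{-1}(d)$, with common orbit dimension $d_n = |d|$ on each stratum.

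Next I would construct the cross-section and the parametrization. Set $V_{S(d)} = \spn\{X_j^\ast : j \in d\}$ and $V_{T(d)} = \spn\{X_j^\ast : j \notin d\}$, so that $i\fg^\ast = V_{S(d)} \oplus V_{T(d)}$. Heuristically the ``jump coordinates'' $l(X_j)$ for $j\in d$ can be shifted arbitrarily along the orbit, while the coordinates $l(X_j)$, $j\notin d$, are orbit-invariant up to a polynomial correction. Proceeding inductively up the flag one produces polynomials $P_j$ ($j \in d$) in the $V_{T(d)}$-coordinates so that
\[
 \Sigma_d := \{\, l \in U_d : l(X_j) = P_j(l|_{V_{T(d)}})\ \text{for all}\ j \in d\,\}
\]
meets every $G$-orbit in $U_d$ in exactly one point. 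The map $\psi_d\colon \Sigma_d \times V_{S(d)} \to U_d$ is then defined by $(l,v) \mapsto \Ad^\ast(g(l,v))\,l$, where $g(l,v) = \exp(t_n X_n)\cdots\exp(t_1 X_1)$ with $t_j = 0$ for $j \notin d$ and the remaining $t_j$ determined by Cramer's rule so that the jump coordinates of $\psi_d(l,v)$ equal the components of $v$.

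The main obstacle will be to prove that $\psi_d$ is birational and non-singular, and that $\Sigma_d$ is genuinely a cross-section. The plan is to induct on $n = \dim\fg$ by splitting off a one-dimensional central ideal $\fz = \RR X_1$ and distinguishing the cases $1 \in e(l)$ and $1 \notin e(l)$: in the first case the orbit is transversal to the hyperplane $\{l(X_1) = c\}$ and one reduces to a coadjoint orbit of $G/\exp(\fz)$ on that hyperplane, while in the second case the whole structure descends to the quotient. Combined with the strict triangularity of the coadjoint action of $\exp(X_j)$ with respect to the dual Malcev basis (which follows from $[\fg,\fg_{(j)}] \subset \fg_{(j-1)}$), this reduces the statement to the inductive hypothesis for $\fg/\fz$ and a one-variable polynomial calculation showing that the Jacobian of the jump-coordinate map is non-zero on $U_d$. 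The resulting $\psi_d$ is a polynomial map admitting a rational inverse (obtained by reading off the jump coordinates and solving the triangular system), hence birational and non-singular, so that $\Sigma\cong i\fg^\ast/G$ as claimed.
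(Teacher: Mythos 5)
First, a framing remark: the paper does not prove Theorem~\ref{thm:AbkParamO} at all; it is quoted from \cite[Theorem 3.1.14]{corgre}, so your proposal can only be compared with the standard Corwin--Greenleaf/Pukanszky argument. The first half of your sketch is essentially that argument and is sound: for a strong Malcev basis each $\fg_{(j)}$ is an ideal, $\fg_{(j)}/\fg_{(j-1)}$ is a one-dimensional (hence central) ideal of $\fg/\fg_{(j-1)}$, so indeed $[\fg,\fg_{(j)}]\subset\fg_{(j-1)}$; the jump set $e(l)$ is constant on orbits, $|e(l)|=\dim\cO_l$, and there are only finitely many layers $U_d$. Parametrizing the orbit by flows along $\exp(t_jX_j)$, $j\in d$, with a triangular, Cramer-solvable dependence of the jump coordinates on the $t_j$ is also how Corwin--Greenleaf obtain rationality; in their version the cross-section is simply $\Sigma_d=U_d\cap V_{T(d)}$ (no correction polynomials $P_j$ are needed), and the triangularity is proved by induction along the flag, not on $\dim\fg$.

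The genuine gap is in your concluding induction, which is precisely the step you assign to prove the cross-section property, birationality and non-singularity. The dichotomy ``$1\in e(l)$'' versus ``$1\notin e(l)$'' is vacuous: $\RR X_1$ is a one-dimensional ideal of a nilpotent Lie algebra, hence central, so $X_1\in\fr_l$ for every $l$ and $1$ is never a jump index. Moreover no coadjoint orbit is transversal to $\{l(X_1)=c\}$, since $l\mapsto l(X_1)$ is $\Ad^\ast(G)$-invariant and every orbit lies inside such a hyperplane; and in the remaining case the structure does \emph{not} descend to the quotient unless $l(X_1)=0$: for $l(X_1)=c\neq 0$ the $G$-action on the affine hyperplane $\{l(X_1)=c\}$ is an affine action of $G/\exp(\RR X_1)$, not its coadjoint action (already for the Heisenberg algebra the generic two-dimensional orbits cannot come from the abelian quotient, whose coadjoint orbits are points). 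If you insist on a dimension induction, the correct split is the Kirillov-type one used elsewhere in this paper (Lemma~\ref{lem:KirillovL}, Theorem~\ref{thm:sigmaOg0}): if $l$ annihilates a central vector, pass to $\fg/\RR Z$; if $\fz(\fg)=\RR Z$ and $l(Z)\neq 0$, pass to the codimension-one subalgebra $\fg_0$, where $\cO_l=p^{-1}(p(\cO_l))$ is saturated over a one-parameter family of $G_0$-orbits, and one must additionally arrange the Malcev basis compatibly with $\fg_0$. As written, your induction cannot deliver the two nontrivial claims, namely that the relevant Jacobian/Cramer determinant is nowhere zero on $U_d$ and that $\Sigma_d$ meets each orbit in exactly one point; the direct flag-induction of Corwin--Greenleaf is the cleaner route.
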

\begin{remark}\label{rem:HlL2Rdn}
For $d\in D$ we know $\cH_l \cong L^2\left(\RR^{d_n/2}\right)$ for all $l\in \Sigma_d$, where $d_n=\dim\cO_l$ for all $l\in \Sigma_d$.
\end{remark}

Now, we collect the ingredients and underlying concepts of the main statements starting at the level of nilpotent Lie algebras. These details will not only be presented as background material but will be crucial for our own results.

\begin{lemma}[see {\cite[Kirillov's Lemma 1.1.12]{corgre}}] \label{lem:KirillovL}
Let $\fg$ be a non-abelian nilpotent Lie algebra whose center $\fz(\fg)=\RR Z$ is one-dimensional. Then $\fg$ can be written as
\begin{align*}
	\fg=\RR Z \oplus \RR Y \oplus \RR X \oplus \fw = \RR X \oplus \fg_0,
\end{align*}
a vector space direct sum with a suitable subspace $\fw$. Furthermore, $[X,Y]=Z$ and 
	$\fg_0= \RR Y \oplus \RR Z \oplus \fw$ is the centralizer of $Y$ and an ideal.
\end{lemma}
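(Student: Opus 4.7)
The plan is to exploit the nilpotence of $\fg$ through its ascending central series in order to locate an element $Y$ whose adjoint action lands inside the one-dimensional center; a transverse element $X$ will then yield the relation $[X,Y]=Z$, and $\fg_0$ will be forced to be the kernel of $\ad Y$. Concretely, I would define $\fz_2(\fg)$ as the preimage in $\fg$ of the center of $\fg/\fz(\fg)$ under the quotient map. Since $\fg$ is nilpotent and non-abelian, its ascending central series is strictly increasing until it fills $\fg$, so in particular $\fz_2(\fg)\supsetneq \fz(\fg)=\RR Z$. Pick any $Y\in \fz_2(\fg)\setminus \fz(\fg)$.

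By the defining property of $\fz_2(\fg)$, the linear map $\ad Y\colon \fg\to \fg$ takes values in $\fz(\fg)=\RR Z$, and it is nonzero because $Y\notin \fz(\fg)$. Hence there exists $X\in \fg$ with $[X,Y]=Z$ after a harmless rescaling. Setting $\fg_0:=\ker(\ad Y)$ makes $\fg_0$ the centralizer of $Y$; since $\ad Y$ has rank exactly one, $\fg_0$ has codimension one in $\fg$, and since $X\notin \fg_0$ we obtain $\fg=\RR X\oplus \fg_0$. Because $Y$ and $Z$ both lie in $\fg_0$ (trivially, respectively by centrality), any vector space complement $\fw$ of $\RR Y\oplus \RR Z$ inside $\fg_0$ gives the refined decomposition $\fg_0=\RR Z\oplus \RR Y\oplus \fw$, and hence $\fg=\RR Z\oplus \RR Y\oplus \RR X\oplus \fw$.

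It remains to verify that $\fg_0$ is in fact an ideal, which I would do via the Jacobi identity: for $A\in \fg_0$ and $B\in \fg$,
\begin{align*}
[Y,[B,A]]=[[Y,B],A]+[B,[Y,A]]=[[Y,B],A],
\end{align*}
since $[Y,A]=0$; but $[Y,B]\in \RR Z$ is central, so $[[Y,B],A]=0$, which shows $[B,A]\in \fg_0$. The only truly conceptual step is the initial choice of $Y$ inside the second center of $\fg$; once this is done, the remaining assertions reduce to a codimension count and one application of the Jacobi identity, so I do not foresee any serious obstacle. The mild subtlety worth flagging is ensuring that $\ad Y$ has rank exactly one, so that the transverse direction $\RR X$ is genuinely one-dimensional and the stated direct sum decomposition holds.
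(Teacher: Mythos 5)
Your argument is correct and is essentially the standard proof of Kirillov's Lemma from the cited reference (Corwin--Greenleaf, Lemma 1.1.12), which the paper itself quotes without proof: choose $Y$ in the second term of the upper central series, note $\ad Y$ is a nonzero map into $\fz(\fg)=\RR Z$ and hence has rank one, take $\fg_0=\ker(\ad Y)$, and verify the ideal property via the Jacobi identity. All steps, including the codimension count and the strict growth of the ascending central series for a non-abelian nilpotent $\fg$, check out.
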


In order to study the coadjoint orbits we start with

\begin{lemma}[see {\cite[Lemma 1.3.2]{corgre}}]\label{lem:radeven}
	For $l\in i\fg^\ast$ we define the bilinear form $B_l(X,Y)=l(\lbrack X,Y\rbrack )$ on $\fg$.  Then the radical 
	\begin{align}\label{eq:Defrl}
		\fr_l:=\{X\in\fg : B_l( X,Y)=0 \; \forall\; Y\in\fg\}=\{X\in\fg : \ad^\ast(X)l=0\}
	\end{align}
	has even codimension in $\fg$. Hence coadjoint orbits are of even dimension. \\
	They are actually symplectic manifolds with the non-degenerate skew symmetric 2-form $\omega(l')\in\Lambda^2(T^*_{l'}\cO_l)$ such that $\omega(l')(-(\ad^\ast X) l', -(\ad^\ast Y) l') = l'(\lbrack X,Y\rbrack)$, $l'\in\cO_l$. Note that $\omega$ is $\Ad^\ast(G)$-invariant.
\end{lemma}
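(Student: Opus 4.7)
The plan is three steps: identify the two descriptions of $\fr_l$, deduce even codimension from skew-symmetry of $B_l$, then construct and verify the symplectic form.

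First, by definition of the coadjoint action $\ad^\ast(X)l = 0$ in $i\fg^\ast$ exactly means that the functional $Y \mapsto l([X,Y])$ on $\fg$ vanishes identically, which is the condition $B_l(X,Y) = 0$ for all $Y$. Hence the two descriptions of $\fr_l$ agree. Differentiating the orbit map $g \mapsto \Ad^\ast(g)l$ at the identity yields a linear surjection $\fg \twoheadrightarrow T_l\cO_l$, $X \mapsto -\ad^\ast(X)l$, whose kernel is exactly $\fr_l$, so $T_l\cO_l \cong \fg/\fr_l$ as vector spaces.

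Second, since $[X,Y] = -[Y,X]$ the form $B_l$ is alternating with radical $\fr_l$, and it therefore descends to a non-degenerate alternating form $\overline{B}_l$ on $\fg/\fr_l$. Standard symplectic linear algebra (existence of a symplectic basis) shows that a non-degenerate alternating form can live only on an even-dimensional space, so $\dim(\fg/\fr_l) \in 2\ZZ$ and consequently $\dim \cO_l = \dim\fg - \dim\fr_l$ is even.

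Third, I would define $\omega$ pointwise by transporting $\overline{B}_{l'}$ along the isomorphism $T_{l'}\cO_l \cong \fg/\fr_{l'}$ at every $l' \in \cO_l$; this forces precisely the formula $\omega(l')(-\ad^\ast(X)l', -\ad^\ast(Y)l') = l'([X,Y])$. Well-definedness and non-degeneracy are immediate from step two, and smoothness in $l'$ is clear from the formula. The $\Ad^\ast(G)$-invariance reduces to the two standard identities $\Ad(g)[X,Y] = [\Ad(g)X, \Ad(g)Y]$ and $(\Ad^\ast(g)l')(\Ad(g)Z) = l'(Z)$, both of which are formal consequences of the definitions. The one step that requires more than linear algebra, and the main potential obstacle, is verifying closedness of $\omega$; this is handled by applying Cartan's formula for $d\omega$ to the fundamental vector fields $\xi_X(l') := -\ad^\ast(X)l'$, combined with the bracket relation $[\xi_X,\xi_Y] = -\xi_{[X,Y]}$ and the Jacobi identity on $\fg$, after which all terms cancel cyclically.
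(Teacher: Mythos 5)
Your proof is correct and follows the standard route: the paper itself does not prove this lemma but cites \cite[Lemma 1.3.2]{corgre}, and your argument (reduce the alternating form $B_l$ modulo its radical $\fr_l$ to get a non-degenerate alternating form on $\fg/\fr_l\cong T_l\cO_l$, hence even dimension, then the Kirillov--Kostant--Souriau construction with invariance from equivariance of $\Ad^\ast$ and closedness via Cartan's formula and the Jacobi identity) is exactly the argument of that reference. The only point to watch is the sign in $[\xi_X,\xi_Y]=\pm\xi_{[X,Y]}$, which depends on the convention for $\ad^\ast$ and the fundamental vector fields but does not affect the cyclic cancellation.
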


Now, we are interested in how we can define an irreducible unitary representation of $G$ given an element $l\in i\fg^\ast$ (with Theorem \ref{thm:BIJ} in mind).

\begin{definition}
A \emph{polarizing subalgebra} for $l\in i\fg^\ast$ is a subalgebra $\fm\subset\fg$ that is a maximal isotropic subspace for the bilinear form $B_l:\fg\times\fg\to i\RR$.\\ 
They are also called \emph{maximal subordinate subalgebras} for $l$.
\end{definition}

\begin{proposition}[see {\cite[Proposition 1.3.3]{corgre}}]
Let $\fg$ be a nilpotent Lie algebra and let $l\in i\fg^\ast$. Then there exists a polarizing subalgebra for $l$.
\end{proposition}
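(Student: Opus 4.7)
My plan is induction on $n = \dim \fg$, with the trivial observation that $\fm = \fg$ works whenever $B_l \equiv 0$ (in particular when $\fg$ is abelian or when $l = 0$) providing the base case. For the inductive step, since $\fg$ nilpotent forces $\fz(\fg) \neq 0$, I would split into two cases according to whether $l$ vanishes on some nonzero central element. This split is the natural one because a central element in $\ker l$ is the cleanest way to reduce the dimension.

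In the easy case, suppose $Z \in \fz(\fg) \setminus \{0\}$ satisfies $l(Z) = 0$. Then $\RR Z$ is a central ideal contained in $\ker l$, so $l$ descends to a functional $\tilde l$ on the lower-dimensional nilpotent Lie algebra $\tilde\fg := \fg / \RR Z$. The inductive hypothesis furnishes a polarizing subalgebra $\tilde\fm$ for $\tilde l$; pulling it back produces $\fm \subset \fg$ containing $\RR Z$, and a short dimension check, using that $Z \in \fr_l$ gives $\dim \fr_{\tilde l} = \dim \fr_l - 1$, confirms that $\fm$ is polarizing for $l$.

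The substantive case is when $l$ does not annihilate any nonzero central element. Then $l|_{\fz(\fg)}$ is injective into $i\RR$, which forces $\dim \fz(\fg) = 1$, and I may invoke Kirillov's Lemma (Lemma~\ref{lem:KirillovL}) to decompose $\fg = \RR X \oplus \fg_0$, where $\fg_0$ is a codimension-one ideal, $Z$ spans $\fz(\fg)$, and $[X,Y] = Z$ for some $Y \in \fg_0$ central in $\fg_0$. Induction produces a polarizing $\fm_0 \subset \fg_0$ for $l_0 := l|_{\fg_0}$, and the claim I would verify is that this same $\fm_0$ is already polarizing for $l$ in the full algebra $\fg$. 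That $\fm_0$ is a subalgebra and is isotropic for $B_l$ are automatic, so the only nontrivial point is maximal isotropy.

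The main obstacle is therefore this dimension comparison: one needs $\dim \fm_0 = \frac{1}{2}(\dim \fg + \dim \fr_l)$, which reduces to the identity $\dim \fr_l = \dim \fr_{l_0} - 1$. I would establish this in two steps. First, $\fr_l \subset \fg_0$: writing $W = aX + W_0$ with $W_0 \in \fg_0$ and using that $Y$ is central in $\fg_0$, one computes $[W,Y] = aZ$, so $W \in \fr_l$ combined with $l(Z) \neq 0$ forces $a = 0$. Second, $\fr_l$ is exactly the hyperplane $\{W \in \fr_{l_0} : l([W,X]) = 0\}$ inside $\fr_{l_0}$, which has codimension one because $Y \in \fr_{l_0}$ (as $Y$ is central in $\fg_0$) whereas $l([Y,X]) = -l(Z) \neq 0$ puts $Y$ outside $\fr_l$. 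Together these two observations deliver the required dimension identity and close the induction.
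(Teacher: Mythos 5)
Your argument is correct. Note that the paper itself does not prove this statement; it simply cites \cite[Proposition 1.3.3]{corgre}, so the comparison is with the standard proofs in the literature. Your induction is essentially the classical one, and it dovetails with the machinery the paper already quotes: your ``easy case'' is the reduction modulo a central element in $\ker l$ (where the key point, which you state correctly, is $\fr_{\tilde l}=\fr_l/\RR Z$, so $\dim\fr_{\tilde l}=\dim\fr_l-1$ and the pullback of a maximal isotropic subspace has the right dimension $\tfrac12(\dim\fg+\dim\fr_l)$), while your ``substantive case'' is exactly Case II of Proposition~\ref{prop:cases} specialized to the Kirillov-Lemma subalgebra $\fg_0$: you reprove the needed characterization $\fr_l\subset\fg_0$ and $\dim\fr_l=\dim\fr_{l_0}-1$ directly, using $[aX+W_0,Y]=aZ$ and $l(Z)\neq 0$, and $Y\in\fr_{l_0}$ with $l([Y,X])=-l(Z)\neq 0$; both computations are right, and together with isotropy of $\fm_0$ for $B_l$ (immediate since $\fm_0\subset\fg_0$) the dimension count closes the induction. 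The only ingredient you use implicitly is the standard linear-algebra fact that maximal isotropic subspaces for the skew form $B_l$ all have dimension $\tfrac12(\dim\fg+\dim\fr_l)$, which is harmless. For contrast, Corwin--Greenleaf also offer a non-inductive route (the Vergne polarization): given a flag of ideals $\fg_1\subset\cdots\subset\fg_n=\fg$ with $\dim\fg_k=k$, the sum $\sum_k\fr_{l|_{\fg_k}}$ is a polarizing subalgebra; that construction produces an explicit polarization (even one adapted to a given flag) at the cost of a slightly more delicate verification, whereas your induction is shorter and mirrors the Case I/Case II scheme on which the rest of this paper's proofs are built.
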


Now, for $l\in i\fg^\ast$ choose a polarizing $\fm$ and let $M=\exp\fm$. Then $\chi_l(\exp Y)=e^{2\pi l(Y)}$ is a one-dimensional representation of $M$ since $l(\lbrack \fm,\fm\rbrack)=0$. Hence, we can define $$\sigma_l:=\Ind_M^G(\chi_l).$$
More precisely, 
\begin{align*}
\cH_l=\{f:G\to \CC \text{ measurable} : f(mg)=\chi_l(m)f(g)\;\forall m\in M \text{ and } \int\limits_{M\backslash G} \|f(g)\|^2 d\dot{g}<\infty\}
\end{align*}
and $$(\sigma_l(x)f)(g)=f(gx)\quad \forall \; x\in G, f\in\cH_l.$$

With this construction one can prove the bijection $\hat{G}\cong i\fg^\ast/G$.

The proof is by induction on  the dimension of $G$. The inductive step is based on the following statement.

\begin{proposition}[see {\cite[Proposition 1.3.4]{corgre}}]\label{prop:cases}
Let $\fg_0$ be a subalgebra of codimension 1 in a nilpotent Lie algebra $\fg$, let $l\in i\fg^\ast$, and let $l_0=l\big|_{\fg_0}$. Let $\fr_l$ be the radical defined in Equation (\ref{eq:Defrl}). Then there are two mutually exclusive possibilities:
\begin{itemize}
\item \textbf{Case I} characterized by any of the following equivalent properties:
	\begin{enumerate}
		\item[(i)] $\fr_l\nsubseteq\fg_0$;
		\item[(ii)] $\fr_l \supset \fr_{l_0}$;
		\item[(iii)] $\fr_{l_0}$ of codimension 1 in $\fr_{l}$.
	\end{enumerate}
	In this case, if $\fm$ is a polarizing subalgebra for $l$, then $\fm_0=\fm\cap\fg_0$ is a polarizing subalgebra for $l_0$; $\fm_0$ is of codimension 1 in $\fm$ and $\fm=\fr_l + \fm_0$.
\item \textbf{Case II} characterized by any of the following equivalent properties:
	\begin{enumerate}
		\item[(i)] $\fr_l \subset \fg_0$;
		\item[(ii)] $\fr_l \subset \fr_{l_0}$;
		\item[(iii)] $\fr_l$ of codimension 1 in $\fr_{l_0}$.
	\end{enumerate}
In this case, any polarizing subalgebra for $l_0$ is also polarizing for $l$.
\end{itemize} 
\end{proposition}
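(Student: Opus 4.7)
The two cases are obviously mutually exclusive, so the plan is (a) to establish the equivalences inside each case by a codimension argument combined with the parity consequence of Lemma \ref{lem:radeven}, and then (b) to read off the polarizing subalgebra statements from dimension counts.

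First I would fix $Z\in\fg\setminus\fg_0$, so that $\fg=\fg_0\oplus\RR Z$, and observe that an element $X\in\fg_0$ lies in $\fr_l$ iff $X\in\fr_{l_0}$ \emph{and} $l([X,Z])=0$; hence $\fr_l\cap\fg_0$ is the kernel of the linear functional $\phi\colon\fr_{l_0}\to i\RR$, $X\mapsto l([X,Z])$, and so has codimension $0$ or $1$ in $\fr_{l_0}$. Dually, $\fr_l\cap\fg_0$ has codimension $0$ or $1$ in $\fr_l$, with codimension $0$ iff $\fr_l\subset\fg_0$. The decisive input is then Lemma \ref{lem:radeven}: $\fr_l$ has even codimension in $\fg$ while $\fr_{l_0}$ has even codimension in $\fg_0$, and since $\dim\fg-\dim\fg_0=1$ the dimensions $\dim\fr_l$ and $\dim\fr_{l_0}$ have opposite parity. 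In particular they cannot be equal, so the possibilities ``both codimensions $0$'' and ``both codimensions $1$'' are excluded, leaving precisely two configurations: Case I, where $\fr_l\cap\fg_0=\fr_{l_0}\subsetneq\fr_l$ with codimension $1$ (yielding (i)--(iii)); and Case II, where $\fr_l\cap\fg_0=\fr_l\subsetneq\fr_{l_0}$ with codimension $1$ (yielding the other (i)--(iii)).

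For the polarizing subalgebra claims I would use two standard facts: any polarizing $\fm$ for $l$ contains $\fr_l$ (since $\fm+\fr_l$ is still isotropic for $B_l$ by $B_l$-orthogonality of $\fr_l$, and $\fm$ is maximal isotropic as a subspace), and $\dim\fm=(\dim\fg+\dim\fr_l)/2$ is the maximal isotropic dimension of the skew form $B_l$; similarly for $l_0$. In Case I, start from $\fm$ polarizing for $l$: since $\fr_l\not\subset\fg_0$ but $\fr_l\subset\fm$, we have $\fm\not\subset\fg_0$, so $\fm_0:=\fm\cap\fg_0$ has codimension $1$ in $\fm$; it is a subalgebra of $\fg_0$, isotropic for $B_{l_0}$, and the identity $\dim\fr_{l_0}=\dim\fr_l-1$ forces $\dim\fm_0$ to match the maximal isotropic dimension for $l_0$, so $\fm_0$ is polarizing. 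The equality $\fm=\fr_l+\fm_0$ then follows by picking any $X\in\fr_l\setminus\fg_0$ and noting that $\fm_0+\RR X\subset\fr_l+\fm_0\subset\fm$ already has dimension $\dim\fm$. In Case II, start from $\fm_0$ polarizing for $l_0$: it is a subalgebra of $\fg$ contained in $\fg_0$, hence $B_l$-isotropic, and $\dim\fr_l=\dim\fr_{l_0}-1$ gives $\dim\fm_0=(\dim\fg+\dim\fr_l)/2$, so $\fm_0$ is polarizing for $l$.

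The main (small) obstacle is the parity step: the individual codimension bounds on $\fr_l\cap\fg_0$ from both sides are easy, but without the parity consequence of Lemma \ref{lem:radeven} one cannot rule out the intermediate configuration $\dim\fr_l=\dim\fr_{l_0}$ and the dichotomy would collapse. Once that is disposed of, everything else is routine dimension bookkeeping.
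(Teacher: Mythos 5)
Your argument is correct and complete: the parity consequence of Lemma~\ref{lem:radeven} applied to both $\fg$ and $\fg_0$ does exactly the work needed to exclude $\dim\fr_l=\dim\fr_{l_0}$, and the polarization claims follow from the maximal-isotropic dimension formula $\dim\fm=\tfrac12(\dim\fg+\dim\fr_l)$ together with $\fr_l\subset\fm$, as you say. The paper itself gives no proof of this proposition (it is quoted from \cite[Proposition 1.3.4]{corgre}), and your proof is essentially the standard dimension-counting argument given there, so there is nothing further to reconcile.
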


Even though this is a rather technical result its significance becomes clearer in the next statements since we also know how the irreducible representations and the orbits of $G$ and $G_0$ are connected in these  two cases.

\begin{theorem}[see {\cite[Theorem 2.5.1]{corgre}}]\label{thm:sigmaOg0}
Let the notation be as above. Let $p: i\fg^\ast\to i\fg_0^\ast$ be the canonical projection and $G_0=\exp(\fg_0)$.
\begin{enumerate}
	\item[(i)] In Case I, where $\fr_l\nsubseteq\fg_0$, we have 
		\begin{align*}
			\sigma_{l_0}\cong\sigma_l\big|_{G_0} \quad\text{and}\quad p:\cO_l\to\cO_{l_0}:=\Ad^\ast(G_0)l_0 \text{ is a bijection}
		\end{align*}
		(see Figure \ref{fig:OG0G}).
	\item[(ii)] In Case II, where $\fr_l\subset\fg_0$, we have
		\begin{align*}
			\sigma_l\cong\operatorname{Ind}_{G_0}^G(\sigma_{l_0}), \quad
			p(\cO_l)=\bigsqcup_{t\in\RR} (\Ad^\ast\exp tX)\cO_{l_0} \quad\text{and}\quad \cO_l=p^{-1}(p(\cO_l)),
		\end{align*}
		where $X$ is any element such that $\fg=\RR X\oplus \fg_0$.
\end{enumerate}
\end{theorem}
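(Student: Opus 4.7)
The plan is to handle the two cases in parallel, with Proposition~\ref{prop:cases} providing the structural input. A useful preliminary observation is that $\fg_0$, being a codimension-one subalgebra of a nilpotent Lie algebra, is automatically an ideal (since $[\fg,\fg]\subsetneq\fg$), so $G_0\triangleleft G$ and $\Ad(G_0)$ preserves $\fg_0$.

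In Case~I, pick any polarizing subalgebra $\fm$ for $l$, set $M=\exp\fm$, and use Proposition~\ref{prop:cases} to obtain $Z\in\fr_l\setminus\fg_0$ with $\fm=\RR Z\oplus(\fm\cap\fg_0)=\RR Z\oplus\fm_0$ and $\fg=\RR Z\oplus\fg_0$, where $\fm_0$ is polarizing for $l_0$. Since $Z\in\fr_l$ gives $\exp(tZ)\cdot l=l$, the $G$-orbit collapses to $\cO_l=G_0\cdot l$. Because $\Ad(G_0)$ preserves $\fg_0$, the restriction $p$ intertwines the $G_0$-actions, so $p(\cO_l)=\cO_{l_0}$. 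A dimension count using Proposition~\ref{prop:cases}(iii) gives $\dim\cO_l=\dim\cO_{l_0}$, and then the stabilizers $G_0^l=\exp(\fr_l\cap\fg_0)\subseteq\exp(\fr_{l_0})=G_0^{l_0}$ are connected of equal dimension, hence equal, so $p|_{\cO_l}$ is injective. For the representations, $\fg=\fm+\fg_0$ yields $G=MG_0$ with $M\cap G_0=M_0$, and Mackey's subgroup theorem (reducing to the single double coset) gives
\begin{equation*}
\sigma_l\big|_{G_0}=\Ind_M^G(\chi_l)\big|_{G_0}\cong\Ind_{M_0}^{G_0}(\chi_l|_{M_0})=\Ind_{M_0}^{G_0}\chi_{l_0}=\sigma_{l_0}.
\end{equation*}

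In Case~II, choose $\fm$ to be polarizing for $l_0$; by Proposition~\ref{prop:cases} it is also polarizing for $l$ and satisfies $\fm\subset\fg_0$. Since $l|_\fm=l_0|_\fm$, induction in stages gives $\sigma_l=\Ind_M^G\chi_l=\Ind_{G_0}^G\Ind_M^{G_0}\chi_{l_0}=\Ind_{G_0}^G\sigma_{l_0}$. For the orbit picture, write $G=G_0\exp(\RR X)$; because $\fg_0$ is an ideal, $\Ad(\exp-tX)\fg_0\subseteq\fg_0$, and a direct computation on $Y\in\fg_0$ yields $p(\exp(tX)\cdot l)=\Ad^*(\exp tX)l_0\in i\fg_0^*$. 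Combining with normality of $G_0$ gives
\begin{equation*}
p(\cO_l)=\bigcup_{t\in\RR}\Ad^*(\exp tX)\cO_{l_0}.
\end{equation*}

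The main obstacle is to promote this to a \emph{disjoint} union and to establish the saturation $\cO_l=p^{-1}(p(\cO_l))$. For saturation, a rank-nullity count applied to the pairing $(Y,Z)\mapsto l([Y,Z])$ on $\fg\times\fg_0$ shows $\dim\tilde\fr_l=\dim\fr_l+1$, where $\tilde\fr_l:=\{Y\in\fg:l([Y,\fg_0])=0\}$; any $Y\in\tilde\fr_l\setminus\fr_l$ then has $l([X,Y])\neq 0$, so $\ad^*(Y)l$ is a nonzero multiple of $X^*$ and hence $X^*\in T_l\cO_l$. Because $\fg_0$ is an ideal one checks $\Ad^*(g)X^*=X^*$ for every $g\in G$, so the constant direction $X^*$ is tangent to $\cO_l$ at every point; integrating yields $\cO_l+\RR X^*=\cO_l$, i.e.\ the fibres of $p|_{\cO_l}$ are precisely the lines $\RR X^*$. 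This forces $\dim p(\cO_l)=\dim\cO_l-1=\dim\cO_{l_0}+1$, ruling out $\Ad^*(\exp tX)\cO_{l_0}=\cO_{l_0}$ for all $t$. Since the set $S:=\{t\in\RR:\Ad^*(\exp tX)\cO_{l_0}=\cO_{l_0}\}$ is a closed subgroup of $\RR$ cut out by polynomial equations (nilpotency makes both the one-parameter group and the coadjoint orbit algebraic), it must be either $\{0\}$ or $\RR$; the dimension argument excludes the latter, giving $S=\{0\}$ and hence the disjointness.
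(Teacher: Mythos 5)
The paper does not prove this statement at all --- it is quoted verbatim from Corwin--Greenleaf \cite[Theorem 2.5.1]{corgre} --- so there is no internal proof to compare against; your proposal should therefore be judged against the classical argument, and on the whole it is correct and close to it in spirit. Case~I (Mackey's subgroup theorem with the single double coset $G=MG_0$, equivariance of $p$ plus $\cO_l=G_0\cdot l$, and equality of stabilizers via $\fr_l\cap\fg_0=\fr_{l_0}$) and the identity $\sigma_l\cong\Ind_{G_0}^G\sigma_{l_0}$ in Case~II (induction in stages through a polarization $\fm\subset\fg_0$) are exactly the standard route. Your treatment of the Case~II orbit picture is the genuinely different part: you argue differential-geometrically ($X^*$ is an $\Ad^*(G)$-fixed direction tangent to $\cO_l$ at every point, then integrate) and algebro-geometrically (the set $S$ of $t$ with $\Ad^*(\exp tX)\cO_{l_0}=\cO_{l_0}$ is an algebraic subgroup of $\RR$), where the textbook proof is purely Lie-algebraic.

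Two steps need shoring up, both by citing standard facts you only gesture at. First, ``integrating yields $\cO_l+\RR X^*=\cO_l$'' is not automatic for an immersed orbit: everywhere-tangency of the constant field gives only that $\{t:\ l'+tX^*\in\cO_l\}$ is open, and you need closedness of nilpotent coadjoint orbits (Kostant--Rosenlicht; cf.\ \cite[Theorem 3.1.4]{corgre}) to conclude it is also closed, hence all of $\RR$. The same closedness (indeed Zariski-closedness) is what makes $S$ ``cut out by polynomial equations'', and your Case~I identification of stabilizers with $\exp(\fr_l\cap\fg_0)$ and $\exp(\fr_{l_0})$ uses connectedness of stabilizers of unipotent actions --- all true, but they should be invoked explicitly. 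Alternatively, and more in the spirit of \cite{corgre}, both Case~II orbit statements follow from an exact computation that avoids flows and algebraic geometry: for $Y\in\tilde\fr_{l'}\setminus\fr_{l'}$ (your $\tilde\fr$, which exists at every $l'\in\cO_l$ since $\fr_{l'}=\Ad(g)\fr_l\subset\fg_0$) one has $(\ad Y)^k W\in\fg_0$ for every $W\in\fg$ and $k\ge 1$ because $[\fg,\fg]\subset\fg_0$, so the exponential series terminates and $\exp(sY)\cdot l'=l'-s\,l'([Y,X])X^*$ exactly; this sweeps out the whole fibre line and proves saturation outright. It also gives disjointness directly: in Case~II one checks $\tilde\fr_l\subset\fg_0$ (an element of $\tilde\fr_l$ outside $\fg_0$ would lie in $\fr_l$), so the stabilizer of $l_0$ in $G$ lies in $G_0$, and $\Ad^*(\exp tX)l_0\in\Ad^*(G_0)l_0$ forces $\exp(tX)\in G_0$, i.e.\ $t=0$ --- no appeal to Zariski closures or to the dimension of $p(\cO_l)$ is needed. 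Finally, your parenthetical reason that $\fg_0$ is an ideal is too terse; the correct argument is that $[\fg,\fg]\subset\fg_0$ (e.g.\ via the normalizer property of nilpotent algebras), which you in any case need later when computing $\Ad(\exp -tX)$ on $\fg_0$ and showing $\Ad^*(g)X^*=X^*$.
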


\begin{figure}[ht]\centering
      \includegraphics[width=.6\linewidth]{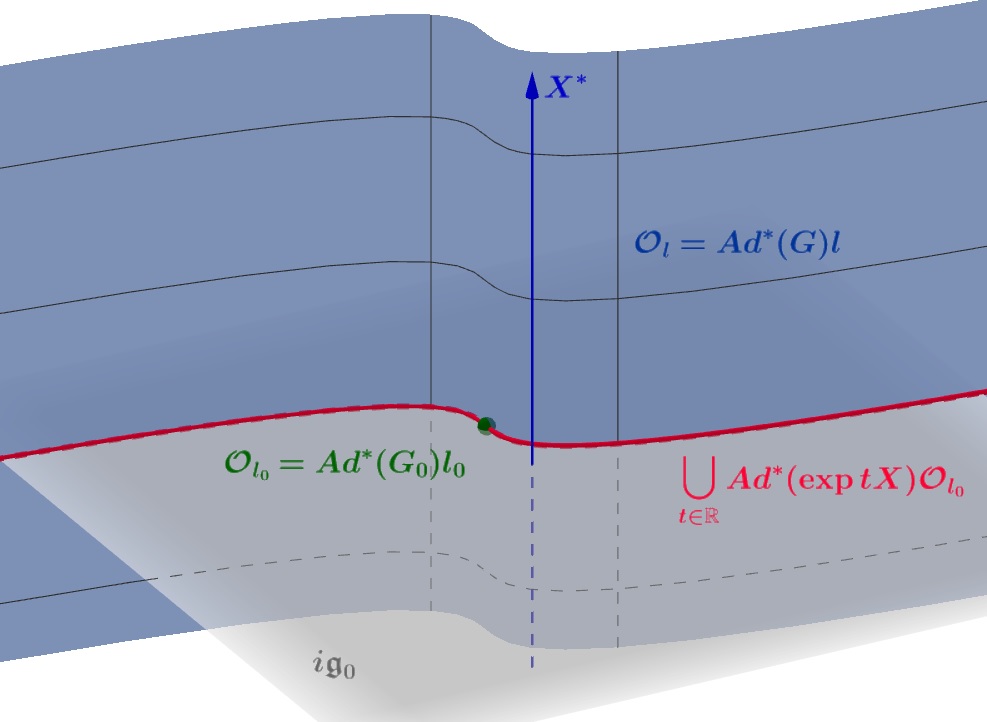}
      \caption{Orbits  of $G_0$ and $G$ in Case II of Theorem \ref{thm:sigmaOg0}} \label{fig:OG0G}    
\end{figure}
In order to nicely formulate the statements about the estimate of matrix coefficients in Section \ref{sec:wf=ac} we introduce the following notation:
\begin{definition}\label{def:gprec}
Let $N$ be a nilpotent, connected, simply connected, nilpotent Lie group with  Lie algebra $\fn$ and fix an inner product on $\fn$. Then for a nilpotent Lie algebra $\fg$ we write $(\fg,\langle ,\rangle_\fg)\prec(\fn,\langle ,\rangle_{\fn})$ if and only if $\fg$ can occur in the induction process of $\fn$ using the two cases of Theorem \ref{thm:sigmaOg0}, i.e. can be obtained via passing to a quotient by a central element or taking the subalgebra of co-dimension 1 given by Kirillov's Lemma \ref{lem:KirillovL}, and the inner product on $\fg$ is the one it inherits from $\fn$.
\end{definition} 

We end this section with two technical lemmas we will use in the next section.

The first lemma concerns the transition maps between different charts of $G$:\\
Let $X_0\in\fg\setminus\{0\}$. Given an inner product in $\fg$ we consider the  orthogonal decomposition $\fg=\RR X_0 \oplus V$ as vector spaces and define
\[
\beta_{X_0}:\Abb{\RR X_0 \oplus V}{G}{tX_0+v}{\exp(tX_0)\exp(v)}
\]
This is clearly a local diffeomorphism
around $0\in\fg$ and we can scale our scalar product on $\fg$ appropriately such that
\begin{equation}\label{eq:kappa_X_0}
\kappa_{X_0} = \beta_{X_0}^{-1}\circ\exp:B_2(0) \to\fg
\end{equation}
is a well defined smooth transition map, and by compactness of the projective space $P\fg$ this can be done uniformly in the choice of $X_0\in\fg\setminus\{0\}$. Thus for each  $N\in\NN$ the quantity $C_{\fg,N}\coloneq \sup_{\|X_0\|=1}\|\kappa_{X_0}\|_{C^N(B_1(0))}$ is finite since it depends continuously on $X_0\in S^{n-1}$.

\begin{lemma}\label{lem:Ckappa}
Let $\fh\subset \fg$ be a subalgebra of co-dimension 1 or $\fh=\fg/\RR Z$ a quotient with $Z\in\fz(\fg)$ and take compatible inner products on $\fg$ and $\fh$. Then $C_{\fh,N}\leq C_{\fg,N}$ for all $N\in\NN$.
\end{lemma}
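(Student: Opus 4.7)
The plan is to treat the two structural relations between $\fh$ and $\fg$ separately, and in each case to express $\kappa_{X_0}^\fh$ directly in terms of $\kappa_{X_0}^\fg$. Once this is done, the inequality $\|\kappa_{X_0}^\fh\|_{C^N}\le\|\kappa_{X_0}^\fg\|_{C^N}$ reduces to the observation that partial derivatives in $\fh$-directions form a subset of those in $\fg$-directions and that $B_R(0)\cap\fh\subset B_R(0)$; passing to the supremum over unit vectors $X_0\in\fh$, which the compatible inner product identifies with a subset of the unit sphere in $\fg$, then yields $C_{\fh,N}\le C_{\fg,N}$.

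In the codimension-one subalgebra case I would fix $X_0\in\fh$ of norm $1$ and decompose $\fh=\RR X_0\oplus V_\fh$ and $\fg=\RR X_0\oplus V_\fg$. Compatibility of the inner products gives $V_\fh=V_\fg\cap\fh$. Because $\exp^\fg$ restricts to $\exp^\fh$ under the Lie subgroup embedding $H\hookrightarrow G$, the chart $\beta_{X_0}^\fg$ restricts to $\beta_{X_0}^\fh$ on $\fh$, and the uniqueness of the decomposition coming from the global diffeomorphism $\beta_{X_0}^\fg$ forces $(\beta_{X_0}^\fg)^{-1}(H)\subset\fh$. Therefore $\kappa_{X_0}^\fh=\kappa_{X_0}^\fg|_\fh$ as maps $\fh\to\fh$, and each partial derivative entering $\|\kappa_{X_0}^\fh\|_{C^N(B_R(0))}$ appears, padded by zeros in the orthogonal $V_\fh^\perp\cap V_\fg$-direction, in $\|\kappa_{X_0}^\fg\|_{C^N(B_R(0))}$ on a larger domain, giving the desired bound.

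In the central quotient case $\fh=\fg/\RR Z$ I would identify $\fh$ isometrically with $(\RR Z)^\perp\subset\fg$, so that the linear projection $\pi\colon\fg\to\fh$ becomes the orthogonal projection $P$, and the group projection $\Pi\colon G\to H=G/\exp(\RR Z)$ satisfies the standard intertwiner $\Pi\circ\exp^\fg=\exp^\fh\circ\pi$. The key additional check is the analogous identity $\Pi\circ\beta_{X_0}^\fg=\beta_{X_0}^\fh\circ\pi$; here compatibility of the inner products forces the orthogonal complement of $\RR X_0$ in $\fg$ to split as $V_\fh\oplus\RR Z$, which is exactly what makes the diagram commute. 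Combining the two intertwiners yields $\kappa_{X_0}^\fh(W)=P(\kappa_{X_0}^\fg(W))$ for $W\in(\RR Z)^\perp\cap B_R(0)$. Since $P$ has operator norm $1$, $\fh$-directional derivatives are a subset of $\fg$-directional ones, and $B_R(0)\cap(\RR Z)^\perp\subset B_R(0)$, the pointwise bound on $C^N$-norms follows, and supping over unit $X_0\in\fh\hookrightarrow\fg$ concludes the argument.

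The main obstacle I anticipate is not conceptual but purely bookkeeping: carefully verifying that the compatibility of inner products really does make the decompositions $\fg=\RR X_0\oplus V_\fg$ and $\fh=\RR X_0\oplus V_\fh$ nested in the subalgebra case and related via $V_\fg=V_\fh\oplus\RR Z$ in the quotient case, so that $\beta_{X_0}^\fg$ genuinely restricts to, respectively descends to, $\beta_{X_0}^\fh$. Once this bookkeeping is in place the inequality on $C^N$-norms and the passage to the supremum are essentially formal.
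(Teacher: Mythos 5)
Your proposal is correct and follows essentially the same route as the paper: in the codimension-one case it identifies $\kappa_{X_0}^\fh$ with the restriction $\kappa_{X_0}^\fg|_\fh$ (using that $\beta_{X_0}^\fg$ restricts to $\beta_{X_0}^\fh$ and uniqueness of the decomposition), and in the central-quotient case it realizes $\fh$ as $(\RR Z)^\perp$ and obtains $\kappa_{X_0}^\fh=\operatorname{pr}_\fh\circ\kappa_{\iota(X_0)}^\fg\circ\iota$, so that the orthogonal projection can only decrease derivative norms. The only difference is that you spell out the bookkeeping (nestedness of the decompositions, the commuting diagram with $\Pi$) which the paper leaves implicit.
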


\begin{proof}
We start with the case that  $\fh\subset \fg$ is a subalgebra of co-dimension 1. Then the exponential map $\exp^\fh$ on $\fh$ is just the exponential map $\exp^\fg$ of $\fg$ restricted to $\fh$. In particular, for $X_0\in\fh$ with $\|X_0\|=1$ we have $\fh=\RR X_0\oplus V_\fh$ and $\fg=\RR X_0\oplus V_\fh\oplus \fh^\perp$, $V_\fg=V_\fh\oplus \fh^\perp$. Thus, $\beta_{X_0}^\fh=\beta_{X_0}^\fg|_{\fh}$ and therefore $\kappa_{X_0}^\fh=\kappa_{X_0}^\fg|_{\fh}$ and $C_{\fh,N}\leq C_{\fg,N}$.

If $\fh=\fg/\RR Z$ is a quotient with $Z\in\fz(\fg)$ we consider the orthogonal complement $W\subset \fg$ of $\RR Z$ in $\fg$ and the vector space isomorphism $\iota:\fh\to W$ such that $\operatorname{pr}:\fg\to\fh$ corresponds to the orthogonal projection.
On the level of the Lie groups we have $H=G/A$ with $A=\exp(\RR Z)$, and $\exp^\fh(X+\RR Z)=\exp^\fg(\iota(X))A\in H$, $\log^\fh(gA)=\log^\fg(g)+\RR Z$.\\
Now, let $X_0\in W, \|X_0\|=1$, $\overline{X}_0=X_0+\RR Z\in\fh$, and $\fh=\RR \overline{X}_0 \oplus V_\fh$. Then $\beta_{\overline{X}_0}^\fh(t\overline{X}_0+\overline{v})=\beta_{\iota(X_0)}^\fg(\iota(tX_0+v))A$ since $Z\in\fz(\fg)$, and $\kappa_{\overline{X}_0}^\fh(t\overline{X}_0+\overline{v})=\kappa_{\iota(X_0)}^\fg(\iota(tX_0+v))+\RR Z = \operatorname{pr}_\fh\circ\kappa_{\iota(X_0)}^\fg(\iota(tX_0+v))$.
This finishes the proof since the projection $\operatorname{pr}_\fh$ can only reduce the norm of derivatives.
\end{proof}
Another estimate which will be useful, later concerns the maximal operator norm of the adjoint action on a Lie algebra $\fg$ with scalar product $\langle\cdot,\cdot\rangle$:
\[
C_{\Ad,\fg} := \sup_{\|X\|, \|Y\| \leq 1} \|[X,Y]\|
\]
We have once more that this quantity can only decrease in the induction process.
\begin{lemma}\label{lem:Ckappa}
Let $\fh\subset \fg$ be a subalgebra of co-dimension 1 or $\fh=\fg/\RR Z$ a quotient with $Z\in\fz(\fg)$ and take compatible inner products on $\fg$ and $\fh$. Then $C_{\Ad,\fh}\leq C_{\Ad, \fg}$.
\end{lemma}
\begin{proof}
 For subalgebras the statement is obvious. If $\fh= \fg/\RR Z$, then let $W<\fg$ be the orthogonal complement and take $X,Y\in W$, $\|X\|_\fg,\|Y\|_\fg\leq 1$, then
 \[
  \|[X+\RR Z, Y+\RR Z]\|_\fh = \|[X,Y]+\RR Z \|_\fh \leq \|[X,Y]\|_\fg\leq C_{\Ad,\fg}
 \]

\end{proof}

\section{Proof of Theorem \ref{thm:wf=ac}}\label{sec:wf=ac}
Let $G$ be a nilpotent, connected, simply connected Lie group with Lie algebra $\fg$ of dimension $n$ and $\fg^\ast$ its vector space dual. By $\hat{G}$ we denote the unitary dual. It is isomorphic to the space of coadjoint orbits $i\fg^\ast / G$.
Let $(\pi,\cH)$ be a unitary representation of $G$. Then we can write
\begin{eqnarray}\label{eq:Intdecomp}
 \pi \cong \int_{\hat{G}}^\oplus \sigma^{\oplus m(\pi,\sigma)}~d\mu_\pi(\sigma), \quad \cH \cong \int_{\hat{G}}^\oplus \cH_\sigma^{\oplus m(\pi,\sigma)}~d\mu_\pi(\sigma),
\end{eqnarray}
where $m(\pi,\sigma)$ keeps track of the multiplicity of $\sigma$ in $\pi$. We recall that for such a representation the orbital support of $\pi$ is given by
\begin{align*}
	\cO - \supp\pi = \bigcup_{\sigma\in\supp(\pi)} \cO_\sigma \; \subset i\fg^\ast, \quad \supp(\pi)=\supp(\mu_\pi),
\end{align*}
where $\cO_\sigma\subset i\fg^\ast$ is the orbit of the coadjoint action corresponding to $\sigma\in\hat{G}$ under the isomorphism $\hat{G} \;\cong\; i\fg^\ast / G $ (see Theorem \ref{thm:BIJ}).\\
We start by using the structure of nilpotent Lie groups and the unitary representations. By Theorem \ref{thm:AbkParamO} after fixing a strong Malcev basis of $\fg$ we have
\begin{align*}
	\hat{G} \;\cong\; i\fg^\ast / G \;\cong\; \Sigma = \bigsqcup_{d\in D} \Sigma_d \;\subset i\fg^\ast,
\end{align*}
where $\Sigma$ is a cross-section of all $G$-orbits and $\Sigma_d$ is a cross-section of all orbits of a certain type $d\in D$, which, in particular, all have the same dimension. Moreover, the set $D$ is finite.\\
Thus, we can push $\mu_\pi$ forward to a positive measure on $\Sigma$ and obtain 
\begin{eqnarray*}
\pi &\cong & \int^\oplus_\Sigma \sigma_l^{\oplus m(\pi,\sigma_l)} ~d\mu_\pi(l) \\
& = & \bigoplus_{d\in D} \int^\oplus_{\Sigma_d} \sigma_l^{\oplus m(\pi,\sigma_l)} ~d\mu_{\pi}(l) =: \bigoplus_{d\in D}  \pi_d.
\end{eqnarray*}
With this decomposition we have
\begin{eqnarray*}
\wf(\pi) = \bigcup_{d\in D} \wf(\pi_d), \qquad \ac(\cO-\supp \pi) = \bigcup_{d\in D} \ac(\cO-\supp\pi_d) 
\end{eqnarray*}
by Proposition \ref{prop:wfoplus} and the fact that $\operatorname{AC}\left(\bigcup_{i=1}^n S_i\right) = \bigcup_{i=1}^n \operatorname{AC}(S_i)$.\\
Therefore, it suffices to show that
\begin{align}
	\ac(\cO-\supp (\pi_d)) = \operatorname{WF}(\pi_d)\quad \forall\;d\in D. \label{eq:decompD}
\end{align}

From now on we fix $d\in D$ and may assume that all the irreducible representations in the support of $\pi$ are of the form $\sigma_l$ for an $l\in\Sigma_d \subset U_d$, where $U_d\subset i\fg^\ast$ is the set of all $l\in i\fg^\ast$ such that its orbit $\cO_l=\Ad^\ast(G)l$ is of type $d$ (see Theorem \ref{thm:AbkParamO}).

Our strategy in the proof of \eqref{eq:decompD} is to prove both inclusions separately in the following two subsections. In both cases we begin with single matrix coefficients $m^{\sigma}_{u,v}(g)=\langle \sigma(g)u,v\rangle$, $\sigma\in\hat{G}$ of type $d$. For the inclusion $\ac(\cO-\supp (\pi))\subset \wf(\pi)$ we find vectors $u,v\in\cH_\sigma$ such that the Fourier transform $\cF(m^{\sigma}_{u,v})$ is bounded from below close to the corresponding orbit $\cO_\sigma$ (see Propositions \ref{prop:ReMcoeffsgeqSimpl} and \ref{prop:ReMcoeffsgeq} in Subsection \ref{subsec:1stIncl}).
For the other inclusion $\wf(\pi)\subset \ac(\cO-\supp(\pi))$ we show that far away from the orbit $\cO_\sigma$ the Fourier transform of all matrix coefficients $m^{\sigma}_{u,v}$ is rapidly decaying (see Proposition \ref{prop:detaO} in Subsection \ref{subsec:2ndIncl}).
Since in both statements the constants can be chosen uniformly for all representations $\sigma\in\hat{G}$ we can then use them to show the desired estimates for the matrix coefficient $m^{\pi}_{u,v}(g)=\langle \pi(g)u,v\rangle=\int^\oplus_{\Sigma_d} m^{\sigma_l}_{u_l,v_l}(g)~d\mu_\pi(l)$ (with corresponding $u_l,v_l\in \cH_l^{\oplus m_l}$) which imply the relation of $\ac(\cO-\supp(\pi))$ and $\wf(\pi)$.

\subsection{Proof of the Inclusion \texorpdfstring{$\ac(\cO-\supp (\pi))\subset \wf(\pi)$}{AC subset WF}}\label{subsec:1stIncl}

For the first inclusion we use Lemma~\ref{lem:HoweKritsPhi}  which states in our setting with $n=\dim\fg$:
\begin{align}
\xi\notin \wf(\pi) \quad \quad \Leftrightarrow \quad \quad
 \exists\; e\in U\subset G, \xi\in V\subset i\fg^\ast \; \forall\; \phi\in C_c^\infty(U) \;\forall\; N>n \; \exists\; C_N>0: \nonumber\\
 |\cF(\langle \pi(\bullet)u,v\rangle \phi)(t \eta)| \leq C_N \|\phi\|_{W^{N+n,1}} \|u\|\|v\|t^{-N} \quad \text{ for } t \gg 0,\; \eta\in V,\; u,v\in\cH, \label{eq:HKritM}
\end{align}
where the constants $C_N$ may be chosen independent of both $\eta\in V$ and $u,v\in\cH$.

As mentioned above we need to find matrix coefficients whose Fourier transform is bounded from below. We use the standard notation $\langle\zeta\rangle := \sqrt{1+\|\zeta\|^2}$.
\begin{proposition}\label{prop:ReMcoeffsgeqSimpl}
Fix an inner product on $\fg$.
There exist $C>0, \varepsilon>0$ such that for all $\zeta\in U_d\subset i\fg^\ast$ we can find vectors $u_\zeta, v_\zeta \in\cH_\zeta^\infty$ with $\|u_\zeta\|=\|v_\zeta\|=1$ that depend measurably on $\zeta$ (i.e. the resulting map $U_d\to L^2(\RR^{d_n/2})\cong\cH_\zeta$ is measurable) such that for all $\eta\in U_d$ with $\|\eta-\zeta\|<\frac{1}{C}$ the following estimate holds for all non-negative $\phi\in C_c^\infty(B_{\frac 1C \langle \zeta \rangle^{-1+\varepsilon}}(0))$:
\begin{align*}
	\re\left( \int_\fg \langle \sigma_\zeta(\exp(X))u_\zeta,v_\zeta \rangle \phi(X) e^{-2\pi\eta(X)}~dX \right) \geq \frac 1C \langle\zeta\rangle^{-1/2}\cdot\int_\fg \phi(X)~dX \geq 0.
\end{align*}
\end{proposition}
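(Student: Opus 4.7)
The plan is to prove the statement by induction on $\dim\fg$, applied uniformly to every $(\fg,\langle\,,\rangle_\fg)\prec(\fn,\langle\,,\rangle_\fn)$ for a fixed ambient $\fn$ (in the sense of Definition \ref{def:gprec}), so that the constants $C,\varepsilon,\delta$ depend only on $\fn$. This is the natural uniformity here: every inductive step descends either to a codimension-one ideal (via Kirillov's Lemma \ref{lem:KirillovL}) or to a quotient by the one-dimensional centre, and both operations preserve the relation $\prec\fn$.

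In the base case $\fg$ is abelian, $\sigma_\zeta$ is the one-dimensional character $e^{2\pi\zeta(\cdot)}$, and we take $u_\zeta=v_\zeta=1$. The integrand equals $\phi(X)\,e^{2\pi(\zeta-\eta)(X)}$, and on $\supp(\phi)$
\begin{align*}
\bigl|(\zeta-\eta)(X)\bigr|\le\|\zeta-\eta\|\,\|X\|<(\varepsilon^{-1}\delta)\,\bigl(\varepsilon\langle\|\zeta\|\rangle^{-1/2}\bigr)\le\delta,
\end{align*}
so for $\delta<\tfrac{1}{12}$ the real part of the integrand is $\ge\tfrac12\phi(X)$ pointwise, giving the claim with $C=\tfrac12$.

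For the inductive step, pick a codimension-one ideal $\fg_0\subset\fg$ via Lemma \ref{lem:KirillovL}, set $\zeta_0=\zeta|_{\fg_0}$, decompose $\fg=\RR X_0\oplus\fg_0$ orthogonally with $\|X_0\|=1$, and work in the chart $\beta_{X_0}$; the transition map $\kappa_{X_0}$ has $C^N$-norms uniformly bounded by $C_{\fn,N}$ via Lemma \ref{lem:Ckappa}. In Case~I of Proposition \ref{prop:cases} we have $\sigma_\zeta|_{G_0}\cong\sigma_{\zeta_0}$, and we may choose $X_0\in\fr_\zeta\setminus\fg_0$; since $\Ad^\ast(\exp tX_0)$ fixes $\zeta$ and hence $\zeta_0$, the operator $\sigma_\zeta(\exp tX_0)$ intertwines $\sigma_{\zeta_0}$ with itself, and Schur's lemma forces $\sigma_\zeta(\exp tX_0)=e^{2\pi t\zeta(X_0)}\Id$. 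Choosing $u_\zeta=u_{\zeta_0}$, $v_\zeta=v_{\zeta_0}$ from the inductive hypothesis and changing variables $X=\kappa_{X_0}^{-1}(tX_0+Y)$ reduces the $\fg$-integral to a $\fg_0$-integral of $\langle\sigma_{\zeta_0}(\exp Y)u_{\zeta_0},v_{\zeta_0}\rangle$ against an effective cutoff $\phi_0(Y)=\int e^{2\pi t(\zeta-\eta)(X_0)}\,\phi\bigl(\kappa_{X_0}^{-1}(tX_0+Y)\bigr)\,|\det D\kappa_{X_0}^{-1}|\,dt$, whose real part is non-negative on a $\fg_0$-ball of radius $\lesssim\langle\|\zeta_0\|\rangle^{-1/2}$, so the inductive hypothesis applies. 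In Case~II, $\sigma_\zeta\cong\Ind_{G_0}^G\sigma_{\zeta_0}$; realise $\cH_\zeta\cong L^2(\RR,\cH_{\zeta_0})$ via the section $s\mapsto\exp(sX_0)$ and set $u_\zeta=\psi\otimes u_{\zeta_0}$, $v_\zeta=\psi\otimes v_{\zeta_0}$ with $\psi\in C_c^\infty(\RR)$ an $L^2$-normalised bump on an interval of length comparable to $\langle\|\zeta\|\rangle^{-1/2}$. The matrix coefficient reads
\begin{align*}
\int_\RR\psi(s+\tau(X,s))\,\overline{\psi(s)}\,\langle\sigma_{\zeta_0}(h(X,s))u_{\zeta_0},v_{\zeta_0}\rangle\,ds
\end{align*}
for the smooth cocycle $(\tau,h)$ of the coset decomposition; after pulling back by $\kappa_{X_0}$ and integrating the $t$-direction first, a non-degenerate stationary-phase argument on the scale $\langle\|\zeta\|\rangle^{-1/2}$ extracts a positive Gaussian factor and leaves a $\fg_0$-integral to which the inductive hypothesis applies.

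The main obstacle will be Case~II: one has to verify that the leading stationary-phase contribution is positive (and not merely non-vanishing), and that the extraction of the Gaussian factor is uniform in $\zeta\in U_d$. This is precisely what forces the $\langle\|\zeta\|\rangle^{-1/2}$ scaling of the ball of $\phi$. Measurability of $\zeta\mapsto(u_\zeta,v_\zeta)$ follows from the cross-section structure of Theorem \ref{thm:AbkParamO} together with the measurable choice of $\psi$; throughout, the bookkeeping of constants must route through $\fn$ rather than $\fg$, which is where Lemma \ref{lem:Ckappa} plays its role.
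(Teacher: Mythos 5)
The decisive gap is in your Case II, which is where the whole content of the proposition lies: a lower bound with a constant that is \emph{uniform} in $\zeta$. With your ansatz $u_\zeta=\psi\otimes u_{\zeta_0}$, $v_\zeta=\psi\otimes v_{\zeta_0}$ and no modulation of $\psi$, the phase in the $t$-direction is linear (frequency $r\approx z$, the $X$-component of $\zeta$, which can be of size $\|\zeta\|$), so there is no stationary point and no Gaussian factor to extract; what the $t$-integration actually produces is essentially the autocorrelation of $\psi$ at frequency $r$, i.e.\ a factor of the type $|\hat\psi(r)|^2$. This is non-negative but rapidly decaying in $\|\zeta\|$ (a bump of width $\langle\|\zeta\|\rangle^{-1/2}$ has negligible Fourier mass at frequency $\sim\|\zeta\|$), so it cannot give a bound $\geq C\int\phi$ with $C$ independent of $\zeta$ --- and ``positive'' alone is not enough. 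The paper's proof removes the large frequency by hand: it takes $u_\zeta=\chi\, e^{2\pi z\zeta_X\circ\log}\otimes u_{\zeta_0}$ (modulated so that only the bounded residual frequency $z-r$, with $|z-r|\leq\delta/\varepsilon$, survives) and, crucially, $v_\zeta=\delta_e\otimes v_{\zeta_0}$, a distributional vector that collapses the integral over $A$ and reduces the matrix coefficient exactly to the $\fg_0$-coefficient times $\chi(e^{tX})\cos(2\pi(z-r)t)$-type factors, to which the induction hypothesis applies; afterwards $\delta_e$ is approximated by smooth bumps with uniform control on compacta via an Arzel\`a--Ascoli argument. Nothing in your sketch substitutes for this cancellation, and you yourself flag the positivity/uniformity as unresolved, so the key estimate is not established.

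Two further deviations would also need repair. First, the induction statement has to be strengthened: the paper proves simultaneously a bound $\|m_{u_\zeta,v_\zeta}\|_{C^1}\leq \tilde{C}_n\langle\|\zeta\|\rangle$, and it is this bound --- not stationary phase --- that dictates the support radius $\varepsilon\langle\|\zeta\|\rangle^{-1/2}$: the passage from the chart $\beta$ to $\exp$ produces an error controlled by $\|M_{u_\zeta,v_\zeta}\|_{C^1}\,\|\kappa(X)-X\|\lesssim \langle\|\zeta\|\rangle\cdot C_{\fn,2}\,\varepsilon^2\langle\|\zeta\|\rangle^{-1}$, which is small only because of this derivative bound; your sketch has no uniform-in-$\zeta$ control of the chart-change error. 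Second, your Case I is not the paper's: the paper's first case is $\zeta(Z)=0$ for some central $Z$, handled by passing to the quotient $\fg/\RR Z$ (which is also what keeps the recursion inside the class $\prec\fn$ of Definition \ref{def:gprec}); your route through a codimension-one subalgebra and Schur's lemma requires justifying that $\sigma_\zeta(\exp tX_0)$ is scalar --- the commutation relation only shows it intertwines $\sigma_\zeta|_{G_0}$ with a conjugate of itself --- and Kirillov's Lemma \ref{lem:KirillovL} as stated is only available when $\dim\fz(\fg)=1$. These points are secondary to the Case II gap but are not cosmetic.
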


Before we can begin with the proof however, we will need to restate this proposition in a more detailed version (see Proposition~\ref{prop:ReMcoeffsgeq}). This is necessary since we want to prove it by induction over $\dim(\fg)$ and need a more detailed induction statement for this. 
We use the notation introduced in Definition \ref{def:gprec} to specify the dependencies of the occurring constants.
The proof of Proposition~\ref{prop:ReMcoeffsgeq} will be based on the distinction of cases for subalgebras of codimension 1 as in Theorem~\ref{thm:sigmaOg0}.
We therefore distinguish the following cases:
\begin{itemize}
\item[i)] If $\zeta(Z)=0$ for some nonzero $Z\in\fz(\fg)$, we consider $\overline{\fg}=\fg/(\RR\cdot Z)$, $\overline{\zeta}=\operatorname{pr}_{i\overline{\fg}^\ast}(\zeta)$ and find that $\sigma_\zeta|_{\overline{G}}\cong \sigma_{\overline{\zeta}}$ and $\cH_\zeta\cong \cH_{\overline{\zeta}}$ analogously to Case I of Theorem~\ref{thm:sigmaOg0}. Thus, we can use for $\sigma_\zeta$ the same vectors that the induction hypothesis applied to $\sigma_{\overline\zeta}$ gives us and check the desired estimates. 

\item[ii)] If $\fz(\fg)=\RR\cdot Z$ and $\zeta(Z)\neq 0$, Kirillov's Lemma \ref{lem:KirillovL} gives us a subalgebra $\fg_0$ to which we apply the induction hypothesis. Writing $\zeta_0=\operatorname{pr}_{i\fg_0^\ast}(\zeta)$ Theorem~ \ref{thm:sigmaOg0} tells us that $\sigma_\zeta=\operatorname{Ind}_{G_0}^G(\sigma_{\zeta_0})$ and this identification allows us to construct the desired vectors $u_\zeta,v_\zeta \in \cH_\zeta$ from two vectors $u_{\zeta_0},v_{\zeta_0}\in \cH_{\zeta_0}$ that are obtained from the induction hypothesis.
However, two difficulties arise: Ideally we would like to take distributional vectors in $\cH_\zeta^{-\infty}$ which we then have to approximate to find a suitable vector in $\cH_\zeta$. Furthermore, to estimate the Fourier transform of the corresponding matrix coefficient we use a chart $\fg\to G$ resulting from the decomposition $\fg=\fg_0\oplus \RR X$ given by the Kirillov Lemma. In order to change to the desired chart $\exp:\fg\to G$ we require further estimations. For these we need an upper bound of the $C^1$-norm of the matrix coefficients which is also added to our second formulation of the proposition.
\end{itemize}
\begin{proposition}\label{prop:ReMcoeffsgeq}
Let $N$ be a nilpotent, connected, simply connected Lie group with  Lie algebra $\fn$ and fix an inner product on $\fn$ such that \eqref{eq:kappa_X_0} is well defined.
Let
\[
 C_0\geq \max(10\pi, e^{C_{\Ad,\fn}}, C_{\fn, 1}, C_{\fn, 2}, C_{\Ad,\fn}+2\pi )
\]
and $\varepsilon>0$ such that $\varepsilon 2^{\dim(\fn)} \leq \frac 12$.

Then for any  $n\leq \dim(\fn)$ and all nilpotent, connected, simply connected Lie groups $G$ with Lie algebra $(\fg,\langle ,\rangle_\fg)\prec(\fn,\langle ,\rangle_{\fn})$ and $\dim\fg=n$,
 and all $\zeta\in U_d\subset i\fg^\ast$ we can find vectors $u_\zeta, v_\zeta \in\cH_\zeta^\infty$, with
 $\|u_\zeta\|=\|v_\zeta\|=1$ that depend measurably on $\zeta$ such that the following estimates hold:
For the matrix coefficient $m_{u_\zeta,v_\zeta}(X)\coloneq \langle \sigma_\zeta(\exp(X))u_\zeta,v_\zeta \rangle$ we have
 \begin{equation}\label{eq:regularity_matrix_coeff}
 	\|m_{u_\zeta,v_\zeta}\|_{C^1(B_{C_0^{-n}}(0))}\leq C_0^{2n} \langle \zeta \rangle
 \end{equation}
Furthermore we have for all $\eta\in U_d$ with $\|\eta-\zeta\|<\frac{1}{C_0}$ the following estimate for all non-negative $\phi\in C_c^\infty(B_{C_0^{-3^n}\langle \zeta \rangle^{-1+\varepsilon}}(0))$:
\begin{equation}\label{eq:lower_bound}
	\re\left( \int_\fg m_{u_\zeta,v_\zeta}(X) \phi(X) e^{-2\pi\eta(X)}~dX \right) \geq C_0^{-3^n}\langle\zeta\rangle^{ -\varepsilon2^n}\int_\fg \phi(X)~dX \geq 0.
\end{equation}

\end{proposition}
\begin{proof}

We prove this statement by induction on $n=\dim\fg$. If $n=1,2$, the group is abelian. In this case the irreducible unitary representations are one-dimensional, i.e. $\sigma_\zeta(g)=e^{2\pi\zeta(\log g)}$, $\cH_\zeta=\CC$. We choose $u_\zeta=v_\zeta=1$ and compute $|d_X m_{u_\zeta,v_\zeta}(X)|=2\pi\|\zeta\|$, thus we have proven \eqref{eq:regularity_matrix_coeff} for $n=1,2$ since $C_0>10\pi$ the estimate is largely fulfilled.\\
For the estimate of the integral we have
\begin{align*}
\re&\left( \int_\fg \langle \sigma_\zeta(\exp(X))u_\zeta,v_\zeta \rangle \phi(X) e^{-2\pi\eta(X)}~dX \right)
	= \re\left( \int_\fg e^{2\pi(\zeta-\eta)(X)}\phi(X)~dX \right) \\
	&= \int_\fg \re\left(e^{2\pi(\zeta-\eta)(X)}\right)\phi(X)~dX
	= \int_\fg \cos(2\pi i(\eta-\zeta)(X))\phi(X)~dX
	 \geq \frac{1}{C_0} \int_\fg \phi(X)~dX
\end{align*}
since $|i((\eta-\zeta)(X)|\leq \|\eta-\zeta\|\cdot\|X\|\leq \frac{1}{C_0^4}$ for $X\in \supp \phi$ and since $C_0>2\pi$. We have thus shown \eqref{eq:lower_bound} for $n=1,2$.

Now we assume $n=\dim\fg\geq 3$. We will distinguish between the two cases following Theorem~\ref{thm:sigmaOg0}.

\textbf{Case I: $\zeta(Z)=0$ for an $Z\in\fz(\fg)$.} Without loss of generality we may assume $\|Z\|=1$. We can choose the orthogonal complement $W<\fg$ such that $\fg=W\oplus\RR Z$.
Then $\overline{\fg}=\fg/(\RR\cdot Z)$ is isomorphic to $W$ and has a well-defined Lie algebra structure given by $\lbrack v+\RR Z, w+\RR Z\rbrack~=~\lbrack v,w\rbrack_\fg +\RR Z$ since $Z\in\fz(\fg)$.

On $\overline{\fg}$ we use the inner product induced from the one we fixed on $\fg$.
Using the corresponding inner products on $i\fg^\ast$ and $i\overline{\fg}^\ast$ we also obtain an orthogonal decomposition $i\fg^\ast=iW^\ast\oplus i\RR Z^* \cong i\overline{\fg}^\ast \oplus i\RR Z^*$ with $\|Z^*\|=1$.

Note that $i\overline{\fg}^\ast$ is $\Ad^\ast(G)$-invariant (again due to  $Z\in\fz(\fg)$). As we assumed $\zeta(Z)=0$, we can identify $\zeta$ with an element $\overline{\zeta}\in i\overline{\fg}^\ast$.
Let $\eta=\overline{\eta}+irZ^*\in i\fg^\ast=i\overline{\fg}^\ast\oplus i\RR Z^*$. By assumption $|r|=|(\eta-\zeta)(Z)|\leq \frac{1}{C_0}$.

The induction hypothesis also gives us normalized vectors $u_{\overline{\zeta}}, v_{\overline{\zeta}}\in\cH_{\overline{\zeta}}^\infty$.
By Theorem~\ref{thm:sigmaOg0}~(i) $\cH_{\overline{\zeta}}\cong \cH_\zeta$ and $\sigma_{\overline{\zeta}}\circ P\cong \sigma_\zeta$  with the projection $P:G\to \overline{G}$.
Thus, we obtain corresponding vectors $u_\zeta=u_{\overline{\zeta}}\in\cH_\zeta^\infty$, $v_\zeta=v_{\overline{\zeta}}\in\cH^\infty_\zeta$ and compute
\begin{align*}
&d_{t}m_{u_\zeta,v_\zeta}(\overline{X}+tZ)=0, \\
&|\partial_{\overline{X}_0} m_{u_\zeta,v_\zeta}(\overline{X}+tZ)|
= |\partial_{\overline{X}_0} m_{u_{\overline{\zeta}},v_{\overline{\zeta}}}(\overline{X})|
\leq C_0^{2(n-1)}\langle \overline{\zeta}\rangle
\leq C_0^{2n}\langle \zeta \rangle, \quad \text{ for } \overline{X}_0\in\overline{\fg} \text{ with } \|\overline{X}_0\|=1,
\end{align*}
and we have proven the induction step for the regularity estimate \eqref{eq:regularity_matrix_coeff}.

For the lower bounds \eqref{eq:lower_bound} we have
\begin{align*}
	R &:= \re\left( \int_\fg \langle \sigma_\zeta(\exp(X))u_\zeta,v_\zeta \rangle \phi(X) e^{-2\pi\eta(X)}~dX \right) \\
	&= \re\left( \int_{\overline{\fg}} \int_\RR \langle \sigma_\zeta(\exp(\overline{X}+tZ))u_\zeta,v_\zeta \rangle \phi(\overline{X}+tZ) e^{-2\pi\eta(\overline{X}+tZ)}~d\overline{X}~dt \right) \\
	&= \re\left( \int_{\overline{\fg}} \int_\RR \langle \sigma_\zeta(\exp(\overline{X})\exp(tZ))u_\zeta,v_\zeta \rangle \phi(\overline{X}+tZ) e^{-2\pi(\overline{\eta}(\overline{X})+irZ^*(tZ))}~d\overline{X}~dt \right) \\
	&= \re\left( \int_{\overline{\fg}} \int_\RR \langle \sigma_{\overline{\zeta}}(\exp(\overline{X}))u_{\overline{\zeta}},v_{\overline{\zeta}} \rangle \phi(\overline{X}+tZ) e^{-2\pi(\overline{\eta}(\overline{X})+irt)}~d\overline{X}~dt \right) \\
	&= \int_\RR \cos(-2\pi rt) \re\left( \int_{\overline{\fg}} \langle \sigma_{\overline{\zeta}}(\exp(\overline{X}))u_{\overline{\zeta}},v_{\overline{\zeta}} \rangle \phi(\overline{X}+tZ) e^{-2\pi\overline{\eta}(\overline{X})}~d\overline{X}\right) \\
	&  \quad \quad - \sin(-2\pi rt)\im\left( \int_{\overline{\fg}} \langle \sigma_{\overline{\zeta}}(\exp(\overline{X}))u_{\overline{\zeta}},v_{\overline{\zeta}} \rangle \phi(\overline{X}+tZ) e^{-2\pi\overline{\eta}(\overline{X})}~d\overline{X}\right)~dt\\
	& \geq  \int_\RR \frac 12\re\left( \int_{\overline{\fg}} \langle \sigma_{\overline{\zeta}}(\exp(\overline{X}))u_{\overline{\zeta}},v_{\overline{\zeta}} \rangle \phi(\overline{X}+tZ) e^{-2\pi\overline{\eta}(\overline{X})}~d\overline{X}\right) \\
	&\quad \quad - \frac{2\pi}{C_0}\cdot C_0^{-3^n}\langle \zeta\rangle^{-1+\varepsilon} \left| \int_{\overline{\fg}} \langle \sigma_{\overline{\zeta}}(\exp(\overline{X}))u_{\overline{\zeta}},v_{\overline{\zeta}} \rangle \phi(\overline{X}+tZ) e^{-2\pi\overline{\eta}(\overline{X})}~d\overline{X} \right| ~dt.
\end{align*}
Since $|2\pi rt|\leq \frac{2\pi}{C_0}\cdot C_0^{-3^n}\langle \zeta\rangle^{-1+\varepsilon}$
for $\overline{X}+tZ\in\supp (\phi)$  and since $\cos(2\pi rt)>1/2$ by the choice of $C_0$. Using the induction
hypothesis as well as $\left|\langle \sigma_{\overline{\zeta}}
(\exp(\overline{X}))u_{\overline{\zeta}},v_{\overline{\zeta}} \rangle e^{-2\pi\overline{\eta}(\overline{X})} \right|=1$ we get
\begin{align*}
	R & \geq  \int_\RR \left(\frac 12 C_0^{-3^{n-1}}\langle \zeta\rangle^{-\varepsilon 2^{n-1}} \int_{\overline \fg} \phi(\overline{X}+tZ) ~d\overline{X} - \frac{2\pi}{C_0}\cdot C_0^{-3^n}\langle \zeta\rangle^{-1+\varepsilon}   \int_{\overline{\fg}} \phi(\overline{X}+tZ) ~d\overline{X} \right) ~dt\\
	&\geq \left(\frac {C_0^3}{2}  - \frac{2\pi}{C_0} \right)C_0^{-3^n}\langle \zeta\rangle^{-\varepsilon 2^{n-1}}\int_\fg \phi(X)dX\\
	&\geq C_0^{-3^n}\langle \zeta\rangle^{-\varepsilon 2^{n}}\int_\fg \phi(X)dX
\end{align*}
Here, we used crucially that $-1+\varepsilon < -\varepsilon 2^n$ for all $n\leq \dim \fn$. We have thus shown the induction step for the lower bound \eqref{eq:lower_bound} and completed the induction for the case I.

\textbf{Case II: $\fz(\fg)=\RR\cdot Z$ and $\zeta(Z)\neq 0$.}
Kirillov's Lemma \ref{lem:KirillovL} gives us $X,Y\in\fg$ and an ideal $\fg_0\subset\fg$ with $\fg=\RR X \oplus \fg_0$ and $[X,Y]=Z$. We may choose $X$ such that the decomposition is orthogonal and $\|X\| = 1$.
Furthermore, $X\notin\fr_l$ and we are in Case II of Proposition~\ref{prop:cases} and Theorem \ref{thm:sigmaOg0} with $G_0=\exp(\fg_0)\subset G$ a normal subgroup.  We define a chart around $e\in G$ via
\begin{align}\label{eq:defbeta}
\beta:\fg=\fg_0 \oplus\RR X \to G, \quad X_0+tX\mapsto\exp(X_0)\exp(tX).
\end{align}
Let $p:i\fg^\ast\to i\fg_0^\ast$ be the canonical projection and $X^*\in\fg^\ast$ defined by $X^\ast(X_0+tX) = t$. Let us write $\zeta=\zeta_0+izX^\ast,\eta=\eta_0+irX^\ast \in \ker(p)^\perp\oplus \ker(p)$. Then by assumption $|z-r|= |(\zeta-\eta)(X)|\leq \frac{1}{C_0}$.

By Theorem \ref{thm:sigmaOg0}, we know $\sigma_\zeta\cong \Ind_{G_0}^G(\sigma_{\zeta_0})$ with $\cH_\zeta\cong L^2(A,\cH_{\zeta_0})$, where $A=\exp(\RR\cdot X)$.
Thus, if we regard $u$ and $v$ as elements of $ L^2(A,\cH_{\zeta_0})$ and $\tilde{u},\tilde{v}:G\to\cH_{\zeta_0}$ the corresponding left-$G_0$-equivariant functions we have for $g_0\in G_0$ and $a\in A$:

\begin{align*}
	& \langle \sigma_\zeta(g_0 a)u,v\rangle_{\cH_\zeta} = \int_A \langle \lbrack\sigma_\zeta(g_0 a)u\rbrack (b), v(b) \rangle_{\cH_{\zeta_0}}~db \quad \text{ and } \\
	& \lbrack\sigma_\zeta(g_0a)\tilde{u}\rbrack (b) = \tilde{u}(bg_0 a)=\tilde{u}(bg_0b^{-1}b a)=\sigma_{\zeta_0}(bg_0b^{-1})\tilde{u}(ba)
	\end{align*}
since $b^{-1}g_0b\in G_0$ as $\fg_0$ is an ideal. This gives us $\lbrack\sigma_\zeta(g_0a)u\rbrack (b) = \sigma_{\zeta_0}(bg_0b^{-1})u(ba)$.

Furthermore, the induction hypothesis gives us, normalized vectors $u_{\zeta_0}, v_{\zeta_0} \in \cH_{\zeta_0}^\infty$, depending measurably on $\zeta_0$.
In order to find the suitable vectors $u_\zeta, v_\zeta\in\cH^\infty_\zeta$ we begin with a cut-off function
$\chi\in C_c^\infty(\RR)$ with $0\leq\chi\leq 1$, $\chi=1$ on $[-1/4,1/4]$ and $\supp\chi \subset[-1,1]$ and $\|\chi\|_{L^2(\RR)} = 1$ and $\|\chi'\|_\infty\leq 2$. Define for $k_1, k_2>1$
\begin{align*}
 u_{\zeta, k_1}(a) &:= \sqrt{k_1}\chi(k_1 X^*(\log(a))) e^{2\pi i z X^*(\log(a))} \otimes u_{\zeta_0} \in C_c^\infty(A,\cH_{\zeta_0}^\infty)\\
 v_{\zeta, k_2}(a) &:= \sqrt{k_2} \chi(k_2 X^*(\log(a))) \otimes v_{\zeta_0}\in   C_c^\infty(A,\cH_{\zeta_0}^\infty)
\end{align*}
With these definitions we have $\|u_{\zeta,k_1}\|_{\mathcal H_\zeta} =\|v_{\zeta, k_2}\|_{\mathcal H_\zeta} = 1$ for all $k_1,k_2>1$. We think about these two families of vectors as smooth approximations of a vector that has a Dirac delta at $\log(a)=0$ and we will late choose $k_1,k_2$ large enough to get the good lower bounds \eqref{eq:lower_bound} but not too large  in order to still have the regularity estimates \eqref{eq:regularity_matrix_coeff}.

Let us introduce the notation
\[
M_{u_{\zeta,k_1},v_{\zeta,k_2}}(\tilde X) :=\langle\sigma_\zeta(\beta(\tilde X))u_{\zeta,k_1},v_{\zeta,k_2} \rangle
\]
and compute for $\tilde X = X_0 +tX\in \fg$
\begin{align*}
 M_{u_{\zeta,k_1},v_{\zeta,k_2}}(\tilde X)&:=\langle\sigma_\zeta(\beta(\tilde X))u_{\zeta,k_1},v_{\zeta,k_2} \rangle\\
 &=\int_A \langle \sigma_{\zeta_0}(b\exp(X_0)b^{-1})u_{\zeta,k_1}(be^{tX}),v_{\zeta,k_2}(b) \rangle~db\\
 &= \sqrt{k_1k_2}\int_A \langle \sigma_{\zeta_0}(b\exp(X_0)b^{-1})u_{\zeta_0},v_{\zeta_0} \rangle \chi(k_1 X^*(\log(be^{tX})))e^{2\pi izX^*(\log(be^{tX}))}\cdot \\
 &\qquad\qquad\chi(k_2 X^*\log b) ~db\\
 &= \sqrt{k_1k_2} \int_\RR \langle \sigma_{\zeta_0}(e^{sX}\exp(X_0)e^{-sX})u_{\zeta_0},v_{\zeta_0} \rangle \chi(k_1(s+t))e^{2\pi iz(s+t)}  \chi(k_2 s) ~ds\\
 &= \sqrt{k_1k_2} \int_\RR  m_{u_{\zeta_0}, v_{\zeta_0}}(\Ad(e^{s X})X_0) \chi(k_1(s+t))e^{2\pi iz(s+t)}  \chi(k_2 s) ~ds.
\end{align*}
In order to prove the lower bounds in the sense of \eqref{eq:lower_bound}, note that by definition of $\chi$, $\phi_{k_2}(s):= \frac{k_2}{\|\chi\|_{L^1}}\chi(k_2 s)$ is a sequence of smooth function converging towards the Dirac distribution at $s=0$ for $k_2\to\infty$. More quantitatively, by the mean value theorem we get for any $f \in C^\infty(\RR)$
\[
 \left|\int_\RR f(s)\phi_{k_2}(s) ds - f(0)\right|\leq \frac{2}{k_2} \sup_{s\in[-1/k_2,1/k_2]} | f'(s)|
\]
Using once more the induction hypothesis on the regularity of $m_{u_{\zeta_0},v_{\zeta_0}}$ we deduce that for $s\in[-1,1]$ and $X_0+tX\in B_{C_0^{-3^n}\langle\zeta\rangle^{-1+\varepsilon}}(0)$
\[
 \left|\frac{d}{ds} m_{u_{\zeta_0}, v_{\zeta_0}}(\Ad(e^{s X})X_0) \chi(k_1(s+t))e^{2\pi iz(s+t)}\right|\leq C_{\Ad, \fg} C_0^{2(n-1)} \langle\zeta\rangle + k_1\|\chi'\|_\infty + 2\pi |z| \leq C_0^{2n-1}\langle\zeta\rangle + 2k_1
\]
as $C_{\Ad,\fg}\leq C_{\Ad,\fn}\leq C_0$ and $|z|\leq \langle \zeta\rangle$. Consequently we get
\begin{align*}
 \left|\sqrt{\frac{k_2}{k_1}}\frac{1}{\|\chi\|_{L^1}} M_{u_{\zeta,k_1},v_{\zeta,k_2}}(X_0+tX) - m_{u_{\zeta_0}, v_{\zeta_0}}(X_0) \chi(k_1 t)e^{2\pi izt} \right| \leq 2\frac{C_0^{2n-1}\langle\zeta\rangle + 2 k_1}{k_2}
\end{align*}
Recall that since $0\leq \chi\leq 1$, that $\|\chi\|_{L^1} \geq \|\chi\|^2_{L^2}= 1$ and we compute
\begin{align*}
 R&:= \re\left( \int_\fg M_{v_{\zeta,k_1},u_{\zeta,k_2}}(\tilde X) \phi(\tilde X) e^{-2\pi\eta(\tilde X)}~d\tilde X \right) \\
 &\geq \sqrt{\frac{k_1}{k_2}}\re\left( \int_{\fg_0} \int_\RR \frac{1}{\|\chi\|_{L^1}}\sqrt{\frac{k_2}{k_1}} M_{v_{\zeta,k_1},u_{\zeta,k_2}}(X_0+tX) \phi(X_0+tX) e^{-2\pi\eta(X_0+tX)}~dX_0~dt \right)  \\
 &\geq \sqrt{\frac{k_1}{k_2}}\bigg[\re\left( \int_{\fg_0} \int_\RR m_{u_{\zeta_0}, v_{\zeta_0}}(X_0) \chi(k_1 t)e^{2\pi izt} \phi(X_0+tX) e^{-2\pi\eta(X_0+tX)}~dX_0~dt \right) \\
 &\quad\quad\quad -2\frac{C_0^{2n-1}\langle\zeta\rangle + 2k_1}{k_2}\int_{\fg} \phi(\tilde X)d\tilde X\bigg] \\
 \end{align*}
Let us only consider the first term in the brackets and recall that $\eta= \eta_0+irX^*$ and that $\chi(k_1 t)=1$ for $X_0+t X \in \supp \phi$, provided that $k_1 C_0^{-3^n}\langle\zeta\rangle^{-1+\varepsilon} \leq 1/4$. We get
 \begin{align*}
 &\re\left( \int_{\fg_0} \int_\RR m_{u_{\zeta_0}, v_{\zeta_0}}(X_0) \chi(k_1t)e^{2\pi izt} \phi(X_0+tX) e^{-2\pi(\eta_0X_0+irt)}~dX_0~dt \right) \\
& = \int_\RR \cos(2\pi (z-r)t) \re\left( \int_{\fg_0}  m_{u_{\zeta_0}, v_{\zeta_0}}(X_0)  \phi(X_0+tX) e^{-2\pi\eta_0(X_0)}~dX_0\right)  \\
&\quad  -\sin(2\pi (z-r)t) \im\left( \int_{\fg_0}  m_{u_{\zeta_0}, v_{\zeta_0}}(X_0) \phi(X_0+tX) e^{-2\pi\eta_0(X_0)}~dX_0\right)~dt\\
& \geq \int_\RR \frac 12  \re\left( \int_{\fg_0}  \langle \sigma_{\zeta_0}(\exp(X_0))u_{\zeta_0},v_{\zeta_0} \rangle  \phi(X_0+tX) e^{-2\pi\eta_0(X_0)}~dX_0\right)  \\
&\quad  -|2\pi (z-r)t| \left|\int_{\fg_0}  \langle \sigma_{\zeta_0}(\exp(X_0))u_{\zeta_0},v_{\zeta_0} \rangle  \phi(X_0+tX) e^{-2\pi\eta_0(X_0)}~dX_0\right|~dt\\
\end{align*}
Using the lower bound \eqref{eq:lower_bound} for the integral over $\fg_0$ from the induction hypothesis as well as the estimates $|z-r|\leq \|\eta-\zeta\|\leq\frac{1}{C_0}$ as well as $|t|\leq C_0^{-3^n}\langle\zeta\rangle^{-1+\varepsilon}$ for $X_0+tX\in\supp\phi$ we can estimate
\begin{align*}
 &\re\left( \int_{\fg_0} \int_\RR m_{u_{\zeta_0}, v_{\zeta_0}}(X_0) \chi(k_1 t)e^{2\pi izt} \phi(X_0+tX) e^{-2\pi(\eta_0X_0+irt)}~dX_0~dt \right) \\
 &\geq \left(\frac{1}{2}C_0^{-3^{n-1}}\langle\zeta \rangle^{-\varepsilon 2^{n-1}}  - 2\pi C_0^{-3^n-1} \langle\zeta\rangle^{-1+\varepsilon}\right) \int_{\fg_0} \int_\RR \phi(X_0+tX) dX_0 dt.
\end{align*}
Plugging this estimate in the above estimate for $R$ we get
\begin{align*}
 R & \geq \sqrt{\frac{k_1}{k_2}} \left(\frac{1}{2}C_0^{-3^{n-1}}\langle\zeta \rangle^{-\varepsilon2^{n-1}}  - 2\pi C_0^{-3^n-1} \langle\zeta\rangle^{-1+\varepsilon}-2\frac{C_0^{2n-1}\langle\zeta\rangle + 2k_1}{k_2}\right) \int_{\fg}\phi(\tilde X) ~d\tilde X \\
\end{align*}
If we now set the values $k_1 = \frac 14 C_0^{2(n-1)}\langle\zeta\rangle^{1-\varepsilon}$ and $k_2= C_0^{3^n}\langle \zeta\rangle^{1+\varepsilon2^{n-1}}$ we get
\[
 \frac{C_0^{2n-1}\langle\zeta\rangle + 2k_1}{k_2} \leq 2C_0^{-3^{n-1}-1}\langle\zeta\rangle^{-\varepsilon2^{n-1}} \text{ and }\sqrt{\frac{k_1}{k_2}}\geq \frac 12 C_0^{-\frac 12 3^{n} }\langle\zeta\rangle^{-\varepsilon2^{n-1}}
\]
and thus setting $u_\zeta:= u_{\zeta,k_1}$ and $v_\zeta:=v_{\zeta,k_2}$ we have thus shown (recall once more that $-\varepsilon 2^n>-1+\varepsilon$)
\begin{align}
 \re\left( \int_\fg M_{v_\zeta,u_{\zeta}}(\tilde X) \phi(\tilde X) e^{-2\pi\eta(\tilde X)}~d\tilde X   \right) & \geq C_0^{-\frac 52 3^{n-1}}\langle\zeta\rangle^{-\varepsilon2^{n}} \frac{1}{2}\left( \frac{1}{2} - \frac{2\pi}{C_0^{2\cdot 3^{n-1}}} - \frac{2}{C_0} \right)   \int_{\fg}\phi(\tilde X) ~d\tilde X \nonumber\\
 &\geq C_0^{-\frac 52 3^{n-1}-1}\langle\zeta\rangle^{-\varepsilon2^{n}}\int_{\fg}\phi(\tilde X) ~d\tilde X\label{eq:M_lower_bound}
\end{align}

We also compute a bound for the $C^1$-norm  of $M_{u_\zeta, v_{\zeta}}$: For $X_0\in\fg_0$  with $\|X_0\|=1$ as well as for arbitrary $Y_0+t_0 X\in B_{C_0^{-(n-1)}}(0)\subset  \fg_0\oplus \RR X= \fg$ we get
\begin{align*}
\left|\frac{\partial}{\partial{r}}\Big|_{r=0}  M_{u_\zeta,v_\zeta}(Y_0+rX_0+t_0X) \right|
&\leq  \int_\RR  \left|\frac{\partial}{\partial{r}}_{|r=0} m_{u_{\zeta_0},v_{\zeta_0}}\left((\Ad\left(e^{sX}\right)Y_0)+r(\Ad(e^{sX})X_0)\right)\right|\\
&\qquad\qquad\sqrt{k_1 k_2}\chi(k_1(s+t_0))\chi(k_2s)~ds \\
&\leq e^{C_{\Ad,\fg}} \|m_{u_{\zeta_0},v_{\zeta_0}}\|_{C^1(B_{C_0^{-(n-1)}}(0))} \cdot\\
&\qquad\qquad\sqrt{k_1}\|\chi(k_1(\bullet+t_0) )\|_{L^2(\RR)}\| \sqrt{k_2}\chi(k_2\bullet )\|_{L^2(\RR)} \\
&\leq e^{C_{\Ad,\fg}} C_0^{2(n-1)} \langle\zeta\rangle
\end{align*}
Here we applied the induction hypothesis for the regularity of $m_{u_{\zeta_0},v_{\zeta_0}}$. In the remaining direction we have:
\begin{align*}
\Big| \frac{\partial}{\partial{r}}\Big|_{r=0}  &M_{u_\zeta,v_{\zeta,k}}(Y_0+(t_0+r)X) \Big|\\
&\leq \bigg| \int_\RR M_{u_{\zeta_0},v_{\zeta_0}}\left(\Ad\left(e^{sX}\right)Y_0\right) \left(k_1\chi'\left(k_1(s+t_0) \right)e^{2\pi iz(s+t_0)} + \chi\left(k_1(s+t_0\right))2\pi iz e^{2\pi iz(s+t_0)}  \right)\cdot\\
&\qquad\qquad \sqrt{ k_1k_2}\chi(k_2s)ds\bigg| \\
&\leq k_1\|\sqrt{k_1} \chi'(k_1(\bullet+t_0))\|_{L^2} \|\sqrt{k_2}\chi(k_2\bullet)\|_{L^2} +2\pi|z| \|\sqrt{k_1}\chi(k_1(\bullet +t_0 )) \|_{L^2}\|\sqrt{k_2}\chi(k_2\bullet)\|_{L^2} \\
&\leq 4k_1 + 2\pi|z|\\
&\leq C_0^{2(n-1)} \langle\zeta\rangle +2\pi\left(\|\zeta\|+\frac{1}{C_0}\right).
\end{align*}
Putting these estimates together we get since $C_0$ had been chosen large enough:
\begin{align*}
\|M_{u_\zeta,v_{\zeta}}\|_{C^1(B_{C_0^{-(n-1)}}(0))} \leq C_0^{2n-1} \langle \zeta\rangle.
\end{align*}

Now, recall that the matrix coefficients $M_{u_\zeta,v_\zeta}$ are defined via the chart $\beta$ from \eqref{eq:defbeta}, so it remains to transform this back to a matrix coefficient defined with the exponential map in order to complete the induction step.
Thus, we define the transition map $\kappa=\beta^{-1}\circ\exp:\fg\to\fg$ to replace the matrix coefficient $M_{u_\zeta,v_\zeta}(\tilde X)$ by the matrix coefficients $m_{u_\zeta,v_\zeta}(\tilde X)=M_{u_\zeta,v_\zeta}(\kappa(\tilde X))$.
For the $C^1$-norm of these matrix coefficients note that by the mean value theorem and as $C_{\fn, 1}\leq C_0$ we know that $\kappa(B_{C_0^{-n}}(0))\subset B_{C_0^{-(n-1)}}(0)$ and we immediately see with Lemma~\ref{lem:Ckappa} that
\begin{align*}
\|m_{u_\zeta,v_\zeta}\|_{C^1(B_{C_0^{-n}}(0))}\leq \|D\kappa\|_{L^\infty(B_1(0))}  \|M_{u_\zeta,v_\zeta}\|_{C^1(B_{C_0^{-(n-1)}}(0))} \leq C_{\fg,1}\|M_{u_\zeta,v_\zeta}\|_{C^1} \leq C_0^{2n}\langle \zeta \rangle.
\end{align*}

In order to estimate the Fourier transform  we look at the following difference in $X\in \supp\phi$:
\begin{align*}
|m_{u_\zeta,v_\zeta}(\tilde{X} )-M_{u_\zeta,v_\zeta}(\tilde X)|
= |M_{u_\zeta,v_\zeta}(\kappa(\tilde{X})) - M_{u_\zeta,v_\zeta}(\tilde{X})|
\leq \|M_{u_\zeta,v_\zeta}\|_{C^1(B_1(0))} \|\kappa(\tilde{X})-\tilde{X}\|
\end{align*}
by the mean value theorem. If we use the Taylor expansion of $\kappa$ in $0$ we have since $D_0\kappa=\Id_{\fg}$:
$$\|\kappa(\tilde{X})-\tilde{X}\|\leq \|\kappa\|_{C^2}\|\tilde{X}\|^2\leq C_{\fn,2} \operatorname{diam}(\supp(\phi))^2,$$
using Lemma \ref{lem:Ckappa} again. Therefore, we have for all $\tilde{X} \in \supp(\phi)$:
\begin{align*}
|m_{u_\zeta,v_\zeta}(\tilde{X})-M_{u_\zeta,v_\zeta}(\tilde{X})|
&\leq \|M_{u_\zeta,v_\zeta}\|_{C^1}C_{\fn,2} \operatorname{diam}(\supp(\phi))^2\\
&\leq C_0^{2n}\langle\zeta\rangle C_{\fn,2} (C_0^{-3^n}\langle\zeta\rangle^{-1+\varepsilon})^{2}\\
&\leq C_0^{2n -2\cdot3^n +1}\langle\zeta\rangle^{-1 + 2 \varepsilon}
\end{align*}
by our choice of $\supp\phi$. With this we can estimate
\begin{align*}
&\re\left( \int_\fg m_{u_\zeta,v_\zeta}(\tilde{X}) \phi(X) e^{-2\pi\eta(\tilde{X})}~d\tilde{X} \right) \\
&= \re\left( \int_\fg M_{u_\zeta,v_\zeta}(\tilde{X}) \phi(\tilde{X}) e^{-2\pi\eta(\tilde{X})}~d\tilde{X} \right)  + \re\left(  \int_\fg \left(m_{u_\zeta,v_\zeta}(\tilde{X})- M_{u_\zeta,v_\zeta}(\tilde{X})\right) \phi(\tilde{X}) e^{-2\pi\eta(\tilde{X})}~dX\right) \\
&\geq   C_0^{-\frac 52 3^{n-1}-1}\langle\zeta\rangle^{-\varepsilon2^{n}} \int_{\fg}\phi(\tilde X) ~d\tilde X  -  C_0^{2n -2\cdot3^n+1 }\langle\zeta\rangle^{-1 + 2 \varepsilon}\int_\fg \phi(\tilde{X})~d\tilde{X}\\
&\geq \left(C_0^{-\frac 52 3^{n-1} +  3^n -1} - C_0^{2n-3^n+1}\right) C_0^{-3^n}\langle\zeta\rangle^{-\varepsilon2^n}\int_\fg \phi(\tilde{X})~d\tilde{X}\\
&\geq C_0^{-3^n}\langle\zeta\rangle^{-\varepsilon2^n}\int_\fg \phi(\tilde{X})~d\tilde{X}\\
\end{align*}
Here we again use that $-1+\varepsilon < -\varepsilon 2^{n-1}$ for all $n\leq \dim\mathfrak n$ and furthermore that $\left(C_0^{-\frac 52 3^{n-1} +  3^n -1} - C_0^{2n-3^n+1}\right)\geq 1$ since in the induction process we have $n\geq 3$. This is the desired estimate.
\end{proof}

Now we can turn to the desired statement:

\begin{theorem}\label{thm:ACsubWF}
Let $G$ be a nilpotent, connected, simply connected Lie group with Lie algebra $\fg$ and $(\pi,\cH_\pi)$ a unitary representation of $G$. Then
	\begin{eqnarray*}
		\ac(\cO-\supp \pi)\subset \wf(\pi).
	\end{eqnarray*}
\end{theorem}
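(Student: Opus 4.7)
The plan is to argue by contradiction via the sharpened Howe criterion of Lemma~\ref{lem:HoweKritsPhi}: given $\xi_0\in\ac(\cO-\supp\pi)\setminus\{0\}$, I will exhibit a family of rank-one trace-class operators $T_k\in J_1(\cH_\pi)$, scales $\tau_k\to\infty$, directions $\xi_k\to\xi_0$ and bump functions $\phi_k$ for which the lower bound supplied by Proposition~\ref{prop:ReMcoeffsgeqSimpl} is incompatible with the decay rate $\|\phi_k\|_{W^{N+n,1}}\tau_k^{-N}$ allowed by Lemma~\ref{lem:HoweKritsPhi} once $N>n$, forcing $\xi_0\in\wf(\pi)$. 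Using Proposition~\ref{prop:wfoplus} and the distributivity of $\ac$ over finite unions, I reduce at once to a single orbit type $d\in D$ and assume $\supp\mu_\pi\subset\Sigma_d$, with all orbits contained in $U_d$.

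By definition of the asymptotic cone I choose $\zeta_k\in\cO-\supp\pi$ with $\|\zeta_k\|\to\infty$ and $\zeta_k/\|\zeta_k\|\to\xi_0/\|\xi_0\|$, set $\tau_k=\|\zeta_k\|/\|\xi_0\|$ and $\xi_k=\zeta_k/\tau_k$, so $\xi_k\to\xi_0$ lies eventually in any prescribed neighborhood $V$ of $\xi_0$ and $\tau_k\xi_k=\zeta_k$ exactly. Each $\zeta_k$ lies on a unique coadjoint orbit $\cO_{l_k}$ with $l_k\in\Sigma_d\cap\supp\mu_\pi$. Via a Borel selection of conjugators on $U_d$ I produce a measurable family $l\mapsto\zeta(l)\in\cO_l$ on a neighborhood $E_k\subset\Sigma_d$ of $l_k$ with $\zeta(l_k)=\zeta_k$ and $\|\zeta(l)-\zeta_k\|<\varepsilon^{-1}\delta$ for every $l\in E_k$. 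Applying Proposition~\ref{prop:ReMcoeffsgeqSimpl} at each $\zeta(l)$ and pulling the resulting unit vectors $u_{\zeta(l)},v_{\zeta(l)}$ back through the Kirillov intertwiner $\sigma_{\zeta(l)}\cong\sigma_l$ yields measurably varying unit vectors $\hat u_l,\hat v_l\in\cH_l$ with the crucial property that the matrix coefficients $\langle\sigma_l(g)\hat u_l,\hat v_l\rangle$ and $\langle\sigma_{\zeta(l)}(g)u_{\zeta(l)},v_{\zeta(l)}\rangle$ coincide as functions on $G$. Assembling the normalized direct integrals $u=\mu_\pi(E_k)^{-1/2}\int^\oplus_{E_k}\hat u_l\,d\mu_\pi(l)$ and analogously $v$ gives unit vectors in $\cH_\pi$, and $T_k:=u\otimes v^\ast$ is a rank-one operator with $\|T_k\|_1=1$.

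For $\phi_k$ I take the pull-back through $\exp$ of a non-negative standard bump rescaled to $B_{r_k}(0)\subset\fg$ with $r_k=\varepsilon\langle\|\zeta_k\|\rangle^{-1/2}$; by a standard scaling count one has $\int\phi_k\circ\exp\gtrsim r_k^n$ and $\|\phi_k\|_{W^{N+n,1}}\lesssim r_k^{-N}$ for every fixed $N$. For $k$ large, $\mathrm{supp}(\phi_k)\subset U$ and $\xi_k\in V$. Since $\tr_\pi(T_k)(g)=\langle\pi(g)u,v\rangle$ is exactly the $\mu_\pi(E_k)^{-1}$-weighted average over $E_k$ of the individual matrix coefficients $\langle\sigma_l(g)\hat u_l,\hat v_l\rangle$, and each of these satisfies the pointwise lower bound of Proposition~\ref{prop:ReMcoeffsgeqSimpl} at $\eta=\zeta_k$ (admissible by the choice of $E_k$), the factors of $\mu_\pi(E_k)$ cancel and one obtains $\re\int_G\tr_\pi(T_k)(g)e^{-2\pi\tau_k\xi_k(\log g)}\phi_k(g)\,dg\gtrsim r_k^n$. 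Comparison with the bound from Lemma~\ref{lem:HoweKritsPhi} under the contrary assumption $\xi_0\notin\wf(\pi)$ gives $r_k^n\lesssim C_N r_k^{-N}\tau_k^{-N}$, i.e. $\|\zeta_k\|^{(N-n)/2}\lesssim 1$. Fixing any $N>n$ and sending $k\to\infty$ produces the desired contradiction.

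The main technical obstacle will be the rigorous construction of the measurable family $\{\hat u_l,\hat v_l\}_{l\in E_k}$: I need a Borel section of conjugators $g(l)$ over a neighborhood of $l_k$ in $\Sigma_d$ together with a compatible measurable trivialization of the bundle of Kirillov intertwiners $\sigma_{\zeta(l)}\cong\sigma_l$, so that the transferred vectors in $\cH_l\cong L^2(\RR^{d_n/2})$ depend measurably on $l$ and inherit the pointwise estimate of Proposition~\ref{prop:ReMcoeffsgeqSimpl}. The decisive structural feature that makes the whole argument go through is that the proposition bounds the \emph{real part} of a quantity whose sign is controlled, so no cancellation arises under the subsequent $\mu_\pi$-averaging; this is what allows the single-representation lower bound to survive the passage to the direct integral $\pi$.
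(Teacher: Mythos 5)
Your proposal is correct and follows essentially the same route as the paper's proof: points of $\cO-\supp\pi$ escaping to infinity in the direction of $\xi_0$, a positive-measure neighborhood in the cross-section $\Sigma_d$ over which the lower-bound proposition (Proposition~\ref{prop:ReMcoeffsgeqSimpl}, in the paper its detailed version Proposition~\ref{prop:ReMcoeffsgeq}) supplies measurably varying unit vectors, normalized direct-integral vectors in $\cH_\pi$, non-negative bumps at scale $\langle\|\zeta_k\|\rangle^{-1/2}$ so that the real parts add without cancellation, and a contradiction with the Sobolev-refined Howe criterion of Lemma~\ref{lem:HoweKritsPhi} via the exponent count $N>n$. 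The one technical obstacle you flag — a measurable family $\zeta(l)\in\cO_l$ near $\zeta_k$ together with compatible intertwiners $\sigma_{\zeta(l)}\cong\sigma_l$ — is handled in the paper without any Borel selection of conjugators, simply by setting $\zeta_l=\psi_d(l,v_m)$ with $v_m$ fixed, using the parametrization $\psi_d$ of Theorem~\ref{thm:AbkParamO}.
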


\begin{proof}
Let $\xi\in \ac(\cO-\supp \pi)$. We may assume without loss of generality that $\|\xi\|=1$. Defining the cones $\cC_\varepsilon := \{\eta \in i\fg^\ast\mid \exists\; t>0:\;\|\xi-t\eta\|<\varepsilon\}$, then for all $\varepsilon >0$ there exists a sequence $(t_m\eta_m)_m\subset \cC_\varepsilon\cap \cO-\supp(\pi)$ with $t_m\to \infty$ and $\eta_m\in B_\varepsilon(\xi)$, $\|\eta_m\|=1$. \\
We now use Theorem \ref{thm:AbkParamO}: For all $m\in\NN$ let $l_m\in\Sigma_d$ be the corresponding element in the cross-section of all orbits of type $d$, i.e. $\cO_{l_m}=\cO_{t_m\eta_m}$.  Then there exists $v_m\in V_{S(d)}$ with $t_m\eta_m=\psi_d(l_m,v_m)$. For $l\in\Sigma_d$ near $l_m$ we define $\zeta_l:=\psi_d(l,v_m)\in\cO_l$ which depends continuously on $l$ (see Figure \ref{fig:Nm}). \\
Now let $C,\varepsilon>0$ be as in Proposition \ref{prop:ReMcoeffsgeqSimpl}and set $\delta=\frac{1}{C}$. Then there exists a neighborhood $N_m\subset\Sigma_d$ of $l_m$ such that $\psi_d(N_m,v_m)\subset B_{\delta}(t_m\eta_m)$ and $\mu_\pi(N_m)>0$ since $l_m\in \cO-\supp(\pi)$ (see also Figure \ref{fig:Nm}).

\begin{figure}[ht] \centering
\includegraphics[width=.5\textwidth]{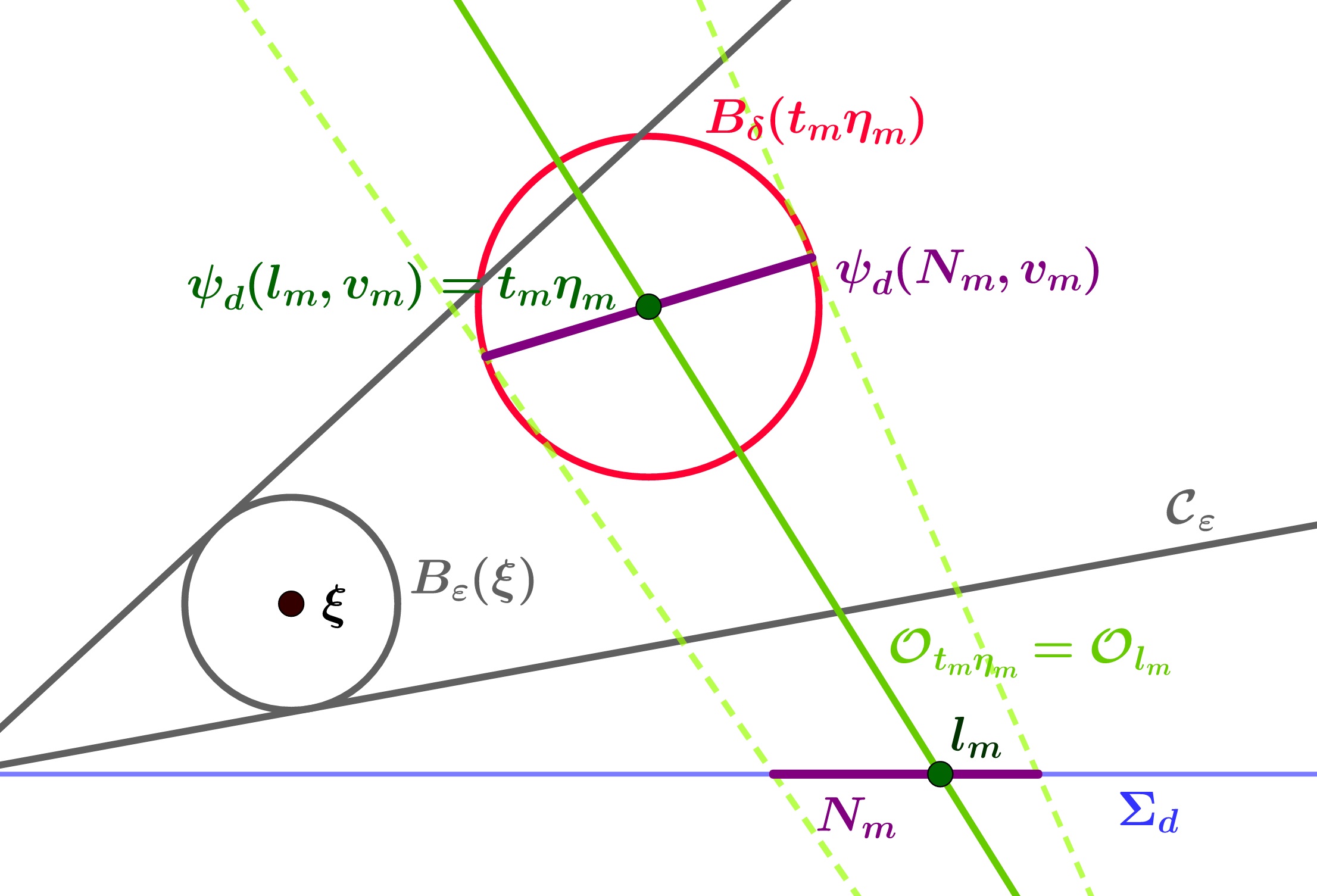}
\caption{The choice of $l_m$ and $N_m$.}
\label{fig:Nm}
\end{figure}

Applying the above Proposition \ref{prop:ReMcoeffsgeqSimpl} to  $\zeta_l$, $l\in N_m$, we obtain measurable, normalized vectors $u_{\zeta_l},v_{\zeta_l}\in\cH_{\zeta_l}$. Since $\sigma_l\cong\sigma_{\zeta_l}$ and $\cH_l\cong \cH_{\zeta_l}$ we have corresponding measurable, normalized vectors $u_{l},v_{l}\in\cH_{l}$. With these we define
\begin{align*}
u^{(m)}:= (\mu_\pi(N_m))^{-\frac 12} \int_{\Sigma_d} \mathds 1_{N_m}(l)u_l~d\mu_\pi(l) \; \in \cH_\pi,
\end{align*}
since the $u_l$ are measurable in $l$ and $\|u^{(m)}\|^2_{\cH_\pi}= (\mu_\pi(N_m))^{-1} \int_{\Sigma_d} \mathds 1_{N_m}(l)\|u_l\|^2~d\mu_\pi(l)=1$. We define $v^{(m)}\in\cH_\pi$ analogously. \\
Recall that Proposition \ref{prop:ReMcoeffsgeq} only gives us a lower bound for large $\|\zeta\|$ for functions $\phi$ with a small support, more precisely the support of $\phi$ shrinks proportional to $\langle\zeta\rangle^{-1+\varepsilon}$.
Thus, let $\phi\in C_c^\infty(B_{\frac{1}{C}}(0))$ be non-negative and $\varphi=\phi\circ\log$.
To adapt its support we define $\phi_m(X)\coloneq\langle t_m\rangle^{(1-\varepsilon)n}\phi(\langle t_m\rangle^{1-\varepsilon}X)\in C_c^\infty(B_{\frac 1C \langle\zeta\rangle^{-1+\varepsilon}}(0))$, $\varphi_m=\phi_m\circ\log$. With this choice $\|\phi_m\|_{L^1}=\|\phi\|_{L^1}$ and $\|\phi_m\|_{W^{K,1}}\leq \langle t_m\rangle^{K(1-\varepsilon)} \|\phi\|_{W^{K,1}}$
Then, by definition of $N_m$:
\begin{align}
|\cF(\langle & \pi(\bullet)u^{(m)},v^{(m)}\rangle\varphi_m)(t_m\eta_m)| \nonumber\\
		&= \left|\int_G \int_{N_m} (\mu_\pi(N_m))^{-1}  \langle\sigma_l(g)u_l,v_l\rangle \varphi_m(g)e^{-2\pi t_m\eta_m(\log g)}~dg~d\mu_\pi(l)  \right| \nonumber\\
		&\geq \left| \re\left(\int_G \int_{N_m} (\mu_\pi(N_m))^{-1}  \langle\sigma_l(g)u_l,v_l\rangle \varphi_m(g)e^{-2\pi t_m\eta_m(\log g)}~dg~d\mu_\pi(l)  \right)\right| \nonumber\\
		&= (\mu_\pi(N_m))^{-1} \left| \int_{N_m}\re\left(\int_\fg   \langle\sigma_l(\exp(X))u_l,v_l\rangle \phi_m(X)e^{-2\pi t_m\eta_m(X)}~dX\right)~d\mu_\pi(l)  \right| \nonumber\\
		&\overset{\text{Prop. }\ref{prop:ReMcoeffsgeqSimpl}}{\geq} (\mu_\pi(N_m))^{-1} \int_{N_m} \frac{1}{C} \langle \psi_d(l,v_m)\rangle^{-1/2} \|\phi_m\|_{L^1}~d\mu_\pi(l)\nonumber\\
		 &= \frac{1}{C}(\langle t_m\rangle -\delta)^{-1/2} \|\phi\|_{L^1}\|u^{(m)}\| \|v^{(m)}\|. \label{eq:final_estimate}
\end{align}
We can use this to show that $\xi\in \wf(\pi)$:
If  we assume that $\xi\notin \wf(\pi)$ we can employ  Lemma~\ref{lem:HoweKritsPhi} (see also (\ref{eq:HKritM})). It states that  there exist $\varepsilon_1,\varepsilon_2 >0$ such that for all $\varphi\in C_c^\infty(\exp(B_{\varepsilon_2}(0)))$ and all $N>n$:
\begin{align*}
|\cF(\langle & \pi(\bullet)u,v\rangle\varphi)(t\eta)| \leq C_{N}\|\varphi\|_{W^{N+n,1}} \|u\|\|v\| t^{-N} \quad \forall\; u,v\in\cH_\pi, \; \eta\in B_{\varepsilon_1}(\xi), \; t> t_0.
\end{align*}
For our sequence chosen above this means we would have 
\begin{align*}
|\cF(\langle \pi(\bullet)u^{(m)}&,v^{(m)}\rangle\varphi_m)(t_m\eta_m)| 
\leq C_{N}\|\varphi_m\|_{W^{N+n,1}} \|u^{(m)}\|\|v^{(m)}\| t_m^{-N} \\
&\leq C_{N}\|\varphi\|_{W^{N+n,1}} \|u^{(m)}\|\|v^{(m)}\| \langle t_m\rangle^{(N+n)(1-\varepsilon)} t_m^{-N}.
\end{align*}
Since $\langle t_m\rangle^{(N+n)(1-\varepsilon)} t_m^{-N}\in\cO\left(t_m^{n-\varepsilon N}\right)$ our estimations above show that this contradicts \eqref{eq:final_estimate} if $N$ is chosen big enough such that $n-N\varepsilon < -1/2$.

\end{proof}

\subsection{Proof of the Inclusion \texorpdfstring{$\wf(\pi)\subset \ac(\cO-\supp(\pi))$}{WF subset AC}}\label{subsec:2ndIncl}

For the proof of this inclusion we again find explicit microlocal estimates of individual matrix coefficients which we again obtain via induction over the dimension of $\fg$. For the formulation we use the notation introduced in Definition \ref{def:gprec} once more.
\begin{proposition}\label{prop:detaO}
Let $N$ be a nilpotent, connected, simply connected Lie group with  Lie algebra $\fn$ and fix an inner product on $\fn$ such that \eqref{eq:kappa_X_0} is defined. Then for any  $n,N\in\NN$ with $N>n$ there exists a constant $C_{n,N}>0$ such that for all nilpotent, connected, simply connected Lie groups $G$ with Lie algebra $(\fg,\langle ,\rangle_\fg)\prec(\fn,\langle ,\rangle_{\fn})$, $\dim\fg=n$, and  for all $1>\varepsilon >0$ there exists a neighborhood $U\subset \fg$ of $0$ such that the following estimate holds for all $\phi\in C_c^\infty(U)$, $l,\eta\in i\fg^\ast$ and all $u,v\in\cH_l$:
\begin{align*}
	\left| \int_{\fg} \langle \sigma_l(\exp(X))u,v\rangle_{\cH_l} \phi(X) e^{-2\pi\eta(X)}~dX  \right| 
	\leq C_{n,N}  \|u\|_{\cH_l} \|v\|_{\cH_l} \|\phi\|_{W^{N+n,1}(\fg)} \langle d(B_{\varepsilon\|\eta\|}(\eta),\cO_l)\rangle^{-N},
\end{align*}
where $dX$ the measure associated to the inner product on $\fg$.
\end{proposition}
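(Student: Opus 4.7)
The plan is to proceed by induction on $n = \dim\fg$, producing constants $C_{n,N}$ depending only on $(\fn,\langle,\rangle_\fn)$ through the chain $(\fg,\langle,\rangle_\fg)\prec(\fn,\langle,\rangle_\fn)$. The base case $n\leq 2$ is the abelian one: $\sigma_l$ is one-dimensional, and the integral collapses to $\cF(\phi)(\eta-l)$, which by $N$-fold integration by parts is bounded by $C\|\phi\|_{W^{N,1}}\|\eta-l\|^{-N}$; for $\varepsilon<1$ the triangle inequality gives $\|\eta-l\|\geq d(B_{\varepsilon\|\eta\|}(\eta),\{l\})$, which is all that is needed since $\cO_l=\{l\}$.

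For the inductive step we split into the two cases of Theorem~\ref{thm:sigmaOg0}, parallel to Proposition~\ref{prop:ReMcoeffsgeq}. In \textbf{Case I}, where $l$ vanishes on some nonzero $Z\in\fz(\fg)$, the matrix coefficient is independent of the $Z$-direction since $\sigma_l(\exp(\overline X+tZ))=\sigma_{\overline l}(\exp\overline X)$ with $\overline l$ the induced functional on $\overline\fg=\fg/\RR Z$. Splitting $\eta=\overline\eta+r\eta_Z$ with $\|\eta_Z\|=1$, the $Z$-direction integral against $\phi$ is a one-variable Fourier transform, whose $N$-fold integration by parts yields a factor $\langle r\rangle^{-N}$ at the cost of $\|\phi\|_{W^{N,1}}$; the remaining integral over $\overline\fg$ is controlled by the inductive hypothesis applied to $\sigma_{\overline l}$. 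Since $\cO_l\subset(\RR Z)^\perp$, the two bounds combine via
\[
d(B_{\varepsilon\|\eta\|}(\eta),\cO_l)^2\lesssim r^2+d(B_{\varepsilon\|\overline\eta\|}(\overline\eta),\cO_{\overline l})^2
\]
to yield the required estimate.

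In \textbf{Case II}, where $\fz(\fg)=\RR Z$ and $l(Z)\neq 0$, Kirillov's Lemma~\ref{lem:KirillovL} yields $\fg=\RR X\oplus\fg_0$ with $\fg_0$ a codimension-$1$ ideal, and Theorem~\ref{thm:sigmaOg0}(ii) gives $\sigma_l\cong\Ind_{G_0}^G\sigma_{l_0}$ realised on $L^2(A,\cH_{l_0})$ with $A=\exp\RR X$. Writing $\eta=\eta_0+r\zeta_X$ with $\zeta_X|_{\fg_0}=0$, $\zeta_X(X)=1$, and working first in the chart $\beta$ from \eqref{eq:defbeta}, the operator $\sigma_l(F)$ for $F=\phi(\beta^{-1}\cdot)e^{-2\pi\eta(\beta^{-1}\cdot)}$ is an integral operator on $L^2(A,\cH_{l_0})$ with operator-valued kernel $K(e^{sX},e^{s'X})=\sigma_{l_0}(F_{s,s'})$, where after the change of variables $Y_0=\Ad(e^{sX})X_0$ (Jacobian $1$ by unimodularity of $G$)
\[
F_{s,s'}(Y_0)=e^{-2\pi r(s'-s)}\,\phi\bigl(\Ad(e^{-sX})Y_0+(s'-s)X\bigr)\,e^{-2\pi[\Ad^\ast(e^{sX})\eta_0](Y_0)}.
\]
For fixed $(s,s')$, the inductive hypothesis bounds $\|\sigma_{l_0}(F_{s,s'})\|_{\op}$ in terms of $d(B_{\varepsilon\|\Ad^\ast(e^{sX})\eta_0\|}(\Ad^\ast(e^{sX})\eta_0),\cO_{l_0})^{-N}$, and an operator-valued Schur test on $K$ then controls $\|\sigma_l(F)\|_{\op}=\sup_{u,v}|\langle\sigma_l(F)u,v\rangle|/(\|u\|\|v\|)$, which is exactly the matrix-coefficient estimate claimed. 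The decisive geometric input is that $\cO_l=p^{-1}(p(\cO_l))$ with $p(\cO_l)=\bigcup_s\Ad^\ast(e^{sX})\cO_{l_0}$ is cylindrical in the $\zeta_X$-direction, so that
\[
d(B_{\varepsilon\|\eta\|}(\eta),\cO_l)=d(B_{\varepsilon\|\eta\|}(\eta_0),p(\cO_l)),
\]
which is precisely the quantity produced by the $s$-integration of the Schur bound. Finally the estimate is transferred from the $\beta$-chart to the $\exp$-chart via Lemma~\ref{lem:Ckappa}, shrinking $U$ so that the transition map $\kappa$ distorts $\phi$ in $W^{N+n,1}$-norm by a controlled amount, exactly as in the final part of the proof of Proposition~\ref{prop:ReMcoeffsgeq}.

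The principal obstacle is Case II, specifically the combinatorial bookkeeping in the Schur test: the inductive distance bound depends on $s$ through the twisted frequency $\Ad^\ast(e^{sX})\eta_0$, and the $W^{N+n-1,1}$-norm of the twisted cutoff $F_{s,s'}$ grows as $\|\Ad(e^{-sX})\|^{N+n-1}$, polynomially in $s$. The compact support of $\phi$ restricts $s'-s$ to a bounded interval but does \emph{not} localise $s$ itself, so one must exploit the orbit cylindricity together with the polynomial structure of the coadjoint curve $s\mapsto\Ad^\ast(e^{sX})\eta_0$ to convert the $s$-integrated Schur bound into the intrinsic bound by $d(B_{\varepsilon\|\eta\|}(\eta),\cO_l)^{-N}$; this is delicate because for fixed $\eta_0$ the polynomial curve may approach $\cO_{l_0}$ arbitrarily closely, and one must carefully track both where and how tangentially this occurs.
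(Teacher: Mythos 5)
Your base case and Case I follow the paper's argument in substance (the paper formalizes your ``combine'' step as a dichotomy: either $r\geq\tfrac12 d(B_{\varepsilon\|\eta\|}(\eta),\cO_l)$, in which case one integrates by parts in the $Z$-direction, or $d(B_{\varepsilon\|\overline\eta\|}(\overline\eta),\cO_{\overline l})\geq\tfrac12 d(B_{\varepsilon\|\eta\|}(\eta),\cO_l)$, in which case one invokes the induction hypothesis -- note you cannot use both bounds multiplicatively, since integrating by parts $N$ times \emph{and} applying the inductive hypothesis would cost $\|\phi\|_{W^{2N+n,1}}$, exceeding the allowed $\|\phi\|_{W^{N+n,1}}$ budget). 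The genuine gaps are in Case II. First, the difficulty you yourself flag in the Schur-test route is real and unresolved: after the change of variables $Y_0=\Ad(e^{sX})X_0$ you twist the \emph{cut-off} and the \emph{frequency}, so the Sobolev norms of $F_{s,s'}$ grow polynomially in $s$ while $s$ ranges over all of $\RR$ (only $s'-s$ is localized), and nothing in your sketch converts the $s$-integrated bound into the claimed one. The paper avoids this entirely by twisting the \emph{representation} instead of the test function: one keeps $\phi(\cdot+tX)$ untouched and observes that $\sigma_{l_0}(b\,\cdot\,b^{-1})\cong\sigma_{\Ad^\ast(b)l_0}$ (via \cite[Lemma 2.1.3]{corgre}), so the induction hypothesis -- whose constant is uniform in the functional $l$ -- is applied to $\Ad^\ast(b^{-1})l_0$; the $b$-dependence then enters only through $d(B_{\varepsilon\|\eta_0\|}(\eta_0),\cO^{G_0}_{\Ad^\ast(b^{-1})l_0})$, which is bounded below uniformly in $b$ by the cylindricity $\cO_l=p^{-1}(p(\cO_l))$, and the remaining $b$-integral is handled by Cauchy--Schwarz, with no growing factors.

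Second, your chart transfer is not correct as stated. The argument ``exactly as in the final part of the proof of Proposition~\ref{prop:ReMcoeffsgeq}'' relied crucially on the support of $\phi$ shrinking like $\langle\|\zeta\|\rangle^{-1/2}$, so that $\|\kappa(X)-X\|\lesssim\langle\|\zeta\|\rangle^{-1}$ cancels the $C^1$-growth of the matrix coefficient. In Proposition~\ref{prop:detaO} the neighborhood $U$ is \emph{fixed}, independent of $l$ and $\eta$, and the relevant object is the oscillatory factor $e^{-2\pi\eta(X)}$ with $\|\eta\|$ unbounded: replacing $X$ by $\kappa(X)$ perturbs the phase by $\|\eta\|\cdot O(\|X\|^2)$, which is not small on a fixed $U$, and composing $\phi$ with $\kappa$ does not reduce matters to the $\beta$-chart estimate because the phase $\eta\circ\kappa^{-1}$ is no longer linear. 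This is why the paper's proof writes the $\exp$-chart integral via Fourier inversion as $\int_{i\fg^\ast}J(\chi,\xi)\,I(\phi,\xi,\eta)\,d\xi$, estimates $I(\phi,\xi,\eta)$ by non-stationary phase (\cite[Theorem 7.7.1]{hoerm}, with $\|D\kappa-1\|$ small on a shrunken $U$), and splits the $\xi$-integration into a region near $\eta$ (controlled by the $\beta$-chart orbit bound $J(\chi,\xi)$) and a far region (controlled by the non-stationary-phase decay). Without an argument of this kind, your Case II does not close, so the proposal as written has a genuine gap both in the core Schur-test estimate and in the passage from the chart $\beta$ to the exponential chart.
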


For the proof of Proposition~\ref{prop:detaO} we distinguish the same two cases as in the proof of Proposition \ref{prop:ReMcoeffsgeq}. We want to outline our approach in each case:
\begin{itemize}
\item[i)] If $l(Z)=0$ for an $Z\in\fz(\fg)$, we consider $\overline{\fg}=\fg/(\RR\cdot Z)$, $\overline{l}=\operatorname{pr}_{i\overline{\fg}^\ast}(l)$ and find that $\sigma_l|_{\overline{G}}\cong \sigma_{\overline{l}}$ analogously to Case I of Theorem~\ref{thm:sigmaOg0}. Thus, we can express the Fourier transform of the matrix coefficient of $\sigma_l$ in terms of the Fourier transform of the corresponding matrix coefficient of $\sigma_{\overline{l}}$ and apply the induction hypothesis. 
To find the desired estimate we use the orbit structure $\operatorname{pr}_{i\overline{\fg}^\ast}(\cO_l)=\cO_{\overline{l}}$.

\item[ii)] If $\fz(\fg)=\RR\cdot Z$ and $l(Z)\neq 0$, Kirillov's Lemma \ref{lem:KirillovL} gives us a subalgebra $\fg_0$, $l_0=\operatorname{pr}_{i\fg_0^\ast}(l)$. Since we are in Case II of Theorem \ref{thm:sigmaOg0} we know that $\sigma_l\cong\operatorname{Ind}_{G_0}^G(\sigma_{l_0})$. Thus, we can express the Fourier transform of the matrix coefficient of $\sigma_l$ using the Fourier transform of the corresponding matrix coefficient of $\sigma_{l_0}$, apply the induction hypothesis and use the orbit picture $\operatorname{pr}_{i\fg_0^\ast}(\cO_\zeta)=\bigsqcup_{t\in\RR} (\Ad^\ast\exp tX)\cO_{\zeta_0}$ and $\cO_\zeta=\operatorname{pr}_{i\fg_0^\ast}^{-1}(\operatorname{pr}_{i\fg_0^\ast}(\cO_\zeta))$ in the estimates.
However, we again face some difficulties: 
In order to express the Fourier transform of the matrix coefficient of $\sigma_l$ using the Fourier transform of the corresponding matrix coefficient of $\sigma_{l_0}$ we use a chart $\fg\to G$ resulting from the decomposition $\fg=\fg_0\oplus \RR X$ given by the Kirillov Lemma. In order to switch to the desired chart $\exp:\fg\to G$ we
apply the Fourier inversion formula and use non-stationary phase. Due to the latter we have to consider neighborhoods whose radius grows proportional to the norm of its center. But this is no problem for us and actually matches the conical property of the wave front set and the asymptotic cone.
\end{itemize}

\begin{proof}
We prove this statement by induction on $\dim \fg$. If $n=\dim \fg=1$ or 2, the group is abelian. In this case the irreducible unitary representations are one-dimensional, $\sigma_l(g)=e^{2\pi l(\log g)}$, and have a zero-dimensional orbit $\cO_l=\{l\}$. 
We compute
\begin{align*}
	\left| \int_\fg \langle \sigma_l(\exp X)u,v\rangle_{\CC} \phi(X) e^{-2\pi\eta(X)}~dX  \right| 
	= \left| \int_\fg  \phi(g) e^{2\pi (l-\eta)(X)}u\overline{v}~dX  \right| 
	=  |\hat{\phi}(\eta-l)|\cdot |u|\cdot |v|.
\end{align*}
Fixing an inner product on $\fg$ we obtain a corresponding one on $i\fg^\ast$. Now let $\{X_i\}_{i=1}^n$ be an orthogonal basis for $\fg$ and pick $j\in\{1,\ldots, n\}$ such that $|(l-\eta)(X_j)|$ is maximal. \\
With this choice we have for $N\in\NN$ and $l\neq \eta$
\begin{align*}
 |\hat{\phi}(\eta-l)| &= \left| (2\pi (l-\eta)(X_j))^{-N} \int_\fg \phi(X)\partial_{X_j}^N e^{2\pi (l-\eta)(X)}dX \right|  \\
 &\leq (2\pi)^{-N} |(l-\eta)(X_j))|^{-N} \int_\fg |\partial_{X_j}^N\phi(X)|dX \\
 &\leq (2\pi)^{-N} \sqrt{n}^N \|l-\eta\|^{-N} \|\phi\|_{W^{N,1}(\fg)} \quad \leq C_{n,N}\langle d(\eta,l) \rangle^{-N} \|\phi\|_{W^{N+n,1}(\fg)}.
\end{align*}
The claim now follows with $U=\fg$ since $d(l,\eta)\geq d(B_{\varepsilon\|\eta\|}(\eta),l)$ for all $\varepsilon>0$.

Now we assume $n=\dim \fg\geq 3$. We will distinguish between the two cases: 

\textbf{Case I: $\boldsymbol{l(Z)=0}$ for an $\boldsymbol{Z\in\fz(\fg)}$.}
Given the inner product on $\fg$ let $W<\fg$ be the subspace such that $\fg=W\oplus\RR Z$ is an orthogonal decomposition. Then $\overline{\fg}=\fg/(\RR\cdot Z)$ is isomorphic to $W$ and has a well-defined Lie algebra structure $\lbrack v+\RR Z, w+\RR Z\rbrack =\lbrack v,w\rbrack_\fg +\RR Z$ since $Z\in\fz(\fg)$. \\
Given an inner product on $\overline{\fg}$ we choose one on $\fg$ such that the decomposition above is orthogonal. Furthermore, without loss of generality we may assume $\|Z\|=1$.
Using the corresponding inner product on $i\fg^\ast$ we also obtain an orthogonal decomposition $i\fg^\ast=iW^\ast\oplus i\RR Z^* \cong i\overline{\fg}^\ast \oplus i \RR Z^*$ with $\|Z^*\|=1$.\\
Note that $i\overline{\fg}^\ast$ is $\Ad^\ast(G)$-invariant (again due to  $Z\in\fz(\fg)$). We can identify $l$ and its orbit $\cO_l^G\subset i\fg^\ast$ with an element $\overline{l}\in i\overline{\fg}^\ast$ and its orbit $\cO_{\overline{l}}^{\overline{G}}\subset i\overline{\fg}^\ast$, respectively.\\
Let $\eta=\overline{\eta}+ir Z^*\in i\fg^\ast=i\overline{\fg}^\ast\oplus i\RR Z^*$. Then by the choice of the inner product we know $d(\eta,\cO_l^G)^2 = d(\overline{\eta},\cO_{\overline{l}}^{\overline{G}})^2+r^2$ and assuming $d(B_{\varepsilon\|\eta\|}(\eta),\cO_l)>0$ we can estimate
\begin{align*}
d(B_{\varepsilon\|\eta\|}(\eta),\cO_l)
&=d(\eta,\cO_l)-\varepsilon\|\eta\|
=\sqrt{d(\overline{\eta},\cO_{\overline{l}})^2+r^2}-\varepsilon\|\eta\|
\leq d(\overline{\eta},\cO_{\overline{l}})+r-\varepsilon\|\eta\|  \\
& \leq d(B_{\varepsilon\|\overline{\eta}\|}(\overline{\eta}),\cO_{\overline{l}})+\varepsilon\|\overline{\eta}\|+r-\varepsilon\|\eta\| 
\leq d(B_{\varepsilon\|\overline{\eta}\|}(\overline{\eta}),\cO_{\overline{l}}) +r,
\end{align*}
since $\|\overline{\eta}\|-\|\eta\|\leq 0$.
This implies that we are either in the case 
\begin{align}
	\text{a) } r \geq \frac 12 d(B_{\varepsilon\|\eta\|}(\eta),\cO_l^G) \quad \text{ or } \quad 
	\text{b) } d(B_{\varepsilon\|\overline{\eta}\|}(\overline{\eta}),\cO_{\overline{l}}^{\overline{G}}) \geq  \frac 12 d(B_{\varepsilon\|\eta\|}(\eta),\cO_l^G). \label{eq:casesabd}
\end{align}
Turning to the integral we want to estimate:
 \begin{align*}
 	J & :=\left| \int_{\fg} \langle \sigma_l(\exp(X))u,v\rangle_{\cH_l} \phi(X) e^{-2\pi\eta(X)}~dX  \right|  \\
 	& =\left|\int_{\overline{\fg}} \int_\RR \langle\sigma_l(\exp(\overline{X}+tZ))u,v\rangle_{\cH_l} \phi(\overline{X}+tZ)e^{-2\pi\eta(\overline{X}+tZ)}d\overline{X}~dt\right| \\
 	& =\left|\int_{\overline{\fg}} \int_\RR \langle\sigma_l(\exp(\overline{X})\exp(tZ))u,v\rangle_{\cH_l} \phi(\overline{X}+tZ)e^{-2\pi(\overline{\eta}(\overline{X})+ir Z^*(tZ))}d\overline{X}~dt\right|  \\
 	& = \left|\int_{\overline{\fg}} \int_\RR \langle\sigma_l(\exp(\overline{X}))u,v\rangle_{\cH_l} \phi(\overline{X}+tZ)e^{-2\pi(\overline{\eta}(\overline{X})+irt)}d\overline{X}~dt\right|
 \end{align*}
The last equality is due to $l(Z)=0$ which implies $\sigma_l(g\exp(tZ))=\sigma_l(g)$ for all $g\in G$, $t\in\RR$.

We start with case a) of (\ref{eq:casesabd}) and define
\begin{align*}
	\tilde{\phi}(t):= \int_{\overline{\fg}} \langle\sigma_l(\exp(\overline{X}))u,v\rangle_{\cH_l} \phi(\overline{X}+tZ)e^{-2\pi\overline{\eta}(\overline{X})}~d\overline{X} \; \in C_c^\infty(\RR).
\end{align*}
Then by integration by parts (as in the abelian case with $l=0$ and $u=v=1$) we obtain
\begin{align*}
	J  = \left| \int_\RR  \tilde{\phi}(t)e^{-2\pi rt}~dt\right|
	\leq C \|\tilde{\phi}\|_{W^{N,1}(\RR)} r^{-N} 
	\overset{(\ref{eq:casesabd}) a)}{\leq}  C_{N}  \|\tilde{\phi}\|_{W^{N+n,1}(\RR)} \langle d(B_{\varepsilon\|\eta\|}(\eta),\cO_l^G) \rangle^{-N}.
\end{align*}
The claim now follows in this case with the following estimation:
\begin{align*}
	\|\tilde{\phi}\|_{W^{N+n,1}(\RR)} & = \sum_{k=1}^{N+n} \|\partial_t^k \tilde{\phi}\|_{L^1(\RR,dt)} \\
	& \leq \sum_{k=1}^{N+n} \int_\RR \int_{\overline{\fg}} \left| \langle	\sigma_l(\exp(\overline{X}))u,v\rangle_{\cH_l} \partial_t^k\phi(\overline{X}+tZ)e^{-2\pi\overline{\eta}(\overline{X})}\right|~d\overline{X}~dt \\
	 \leq\quad \|u\| \|v\|  \|\phi\|_{W^{N+n,1}(\fg)}.
\end{align*}
Now let's turn to case b) of  (\ref{eq:casesabd}). Note that by Theorem \ref{thm:sigmaOg0} (i) we know $\cH_l\cong\cH_{\overline{l}}$ and $\sigma_{\overline{l}}\circ P\cong \sigma_l$  with the projection $P:G\to \overline{G}$. \\
Thus, we have
\begin{align*}
J &= \left|\int_{\overline{\fg}} \int_\RR \langle\sigma_{\overline{l}}(\exp(\overline{X}))u,v\rangle_{\cH_{\overline{l}}} \phi(\overline{X}+tZ)e^{-2\pi(\overline{\eta}(\overline{X})+irt)}d\overline{X}~dt\right|.
\end{align*}
Now define
\begin{align*}
	\check{\phi}(\overline{X}):= \int_\RR \phi(\overline{X}+tZ)e^{-2\pi irt}~dt \; \in C_c^\infty(\overline{\fg}),
\end{align*}
and choose the neighborhood $0\in U\subset \fg$ such that $\supp\check{\phi}\subset \overline{U}\subset \overline{\fg}$ given by the induction hypothesis applied to $\overline{\fg}$.
Then 
\begin{align*}
	J &= \left|\int_{\overline{\fg}}  \langle\sigma_{\overline{l}}(\exp(\overline{X}))u,v\rangle_{\cH_l} \check{\phi}(\overline{X})e^{-2\pi\overline{\eta}(\overline{X})}d\overline{X} \right| \\
	& \overset{\text{(IH)}}{\leq} C_{n-1,N} \|u\| \|v\| \|\check{\phi}\|_{W^{N+n-1,1}(\overline{\fg})} \langle d(B_{\varepsilon\|\overline{\eta}\|}(\overline{\eta}),\cO_{\overline{l}}^{\overline{G}}) \rangle^{-N} \\
	& \overset{(\ref{eq:casesabd}) b)}{\leq} C_{n,N} \|u\| \|v\| \|\check{\phi}\|_{W^{N+n,1}(\overline{\fg})} \langle d(B_{\varepsilon\|\eta\|}(\eta),\cO_l^G) \rangle^{-N}.
\end{align*}
The claim now follows in this case with the following estimation:
\begin{align*}
	\|\check{\phi}\|_{W^{N+n,1}(\overline{\fg})} & = \sum_{|\alpha|<N+n} \|\partial^\alpha\check{\phi}\|_{L^1(\overline{\fg},dv)} \\
	& = \sum_\alpha \int_{\overline{\fg}} \left| \int_\RR \partial_{\overline{X}}^\alpha\phi(\overline{X}+tZ)e^{-2\pi irt}~dt \right|~d\overline{X} \\
	& \leq \sum_\alpha \int_{\overline{\fg}} \int_\RR \left| \partial_{\overline{X}}^\alpha\phi(\overline{X}+tZ)\right|~dt~d\overline{X} \quad \leq \quad  \|\phi\|_{W^{N+n,1}(\fg)}.
\end{align*}

\textbf{Case II: $\boldsymbol{\fz(\fg)=\RR\cdot Z}$ and $\boldsymbol{l(Z)\neq 0}$.} 
Kirillov's Lemma \ref{lem:KirillovL} gives us $X,Y\in\fg$ and an ideal $\fg_0\subset\fg$ with $\fg=\RR X \oplus \fg_0$ and $[X,Y]=Z$. We may choose $X$ such that this decomposition is orthogonal and $\|X\| = 1$.
Since $\dim(\fz(\fg_0))>1$ as $Z,Y\in\fz(\fg_0)$
we are in Case I in the induction hypotheses for $G_0$
. We define a chart for $G$ via 
 $$\beta:\fg=\fg_0 \oplus\RR X \to G, \quad X_0+tX\mapsto\exp(X_0)\exp(tX).$$
Since $X\notin\fr_l$ and we are in Case II of Proposition \ref{prop:cases} and Theorem \ref{thm:sigmaOg0}:
\begin{align*}
	& p:i\fg^\ast\to i\fg^\ast_0, \quad l_0:=p(l), \eta_0:=p(\eta),\;  \cO_{l_0}^{G_0}:=\Ad^\ast(G_0)l_0, \\
	& p(\cO_l^G)=\bigsqcup_{t\in\RR} (\Ad^\ast\exp tX)\cO_{l_0}^{G_0},\quad \cO_l^G=p^{-1}(p(\cO_l^G)).
\end{align*}
where $G_0=\exp(\fg_0)\subset G$ is a normal subgroup.
Note that we also have an orthogonal decomposition $i\fg^\ast=i\RR X^* \oplus i\fg^\ast_0$, $X^*(X)=1$, which gives us for all $a\in A=\exp(\RR X)$:
\begin{align*}
	d(\eta_0,\cO_{\Ad^\ast(a)l_0}^{G_0})=d(\eta_0,\Ad^\ast(a)\cO_{l_0}^{G_0})\geq d(\eta_0,p(\cO_l^G)) = d(\eta, \cO_l^G).
\end{align*}
Assuming $d(B_{\varepsilon\|\eta\|}(\eta),\cO_l^G)>0$ we can estimate 
\begin{align}\label{eq:d0}
d(B_{\varepsilon\|\eta\|}(\eta),\cO_l^G) &= d(\eta,\cO_l^G) -\varepsilon\|\eta\|
\leq d(\eta_0,\cO_{\Ad^\ast(a)l_0}^{G_0})-\varepsilon\|\eta\|  \\
&= d(B_{\varepsilon\|\eta_0\|}(\eta_0),\cO_{\Ad^\ast(a)l_0}^{G_0})+\varepsilon\|\eta_0\|-\varepsilon\|\eta\| \leq d(B_{\varepsilon\|\eta_0\|}(\eta_0),\cO_{\Ad^\ast(a)l_0}^{G_0}), \nonumber
\end{align}
since $\|\eta_0\|-\|\eta\|\leq 0$.
In addition to that we have $\eta=\eta_0+irX^*$ with $iX^* \in\ker(p)$.

We start by estimating the following integral and deal with the transition from the chart $\beta$ to the exponential chart later on.
\begin{align*}
	J(\phi,\eta) &:=\left| \int_{\fg} \langle \sigma_l(\beta(X))u,v\rangle_{\cH_l} \phi(X) e^{-2\pi\eta(X)}~dX  \right|  \\
 	& =\left|\int_{\fg_0} \int_\RR \langle\sigma_l(\exp(X_0)\exp(tX))u,v\rangle_{\cH_l} \phi(X_0+tX)e^{-2\pi(\eta_0(X_0)+ irX^*(tX))}dX_0~dt\right|.
\end{align*}
By Theorem \ref{thm:sigmaOg0}, we also know $\sigma_l\cong \operatorname{Ind}_{G_0}^G(\sigma_{l_0})$. Note that $\cH_l\cong  L^2(A,\cH_{l_0})$. If we regard $u$ and $v$ as elements of $ L^2(A,\cH_{l_0})$ and $\tilde{u},\tilde{v}:G\to\cH_{l_0}$ the corresponding functions in the 'standard model' we have again
\begin{align*}
	& \langle \sigma_l(g_0 a)u,v\rangle_{\cH_l} = \int_A \langle \lbrack\sigma_l(g_0 a)u\rbrack (b), v(b) \rangle_{\cH_{l_0}}~db \quad \text{ and } \\
	& \lbrack\sigma_l(g_0a)\tilde{u}\rbrack (b) = \tilde{u}(bg_0 a)=\tilde{u}(bg_0b^{-1}b a)=\sigma_{l_0}(bg_0b^{-1})\tilde{u}(ba) 
\end{align*}
since $b^{-1}g_0b\in G_0$ as $\fg_0$ is an ideal. This gives us $\lbrack\sigma_l(g_0a)u\rbrack (b) = \sigma_{l_0}(bg_0b^{-1})u(ba)$.\\
We deduce that
\begin{align*}
		J(\phi,\eta) &= \bigg|\int_{\fg_0} \int_\RR \left(\int_A \langle\sigma_{l_0}(b\exp(X_0)b^{-1})u(be^{tX}),v(b)\rangle_{\cH_{l_0}}db\right)\cdot \\
		& \qquad\qquad\qquad\qquad\qquad\qquad\qquad\qquad\qquad\quad \phi(X_0+tX)e^{-2\pi(\eta_0(X_0)+ir X^*(tX))}dX_0~dt\bigg| \\
		& \leq \int_\RR \int_A \left|\int_{\fg_0} \langle\sigma_{l_0}(b\exp(X_0)b^{-1})u(be^{tX}),v(b)\rangle_{\cH_{l_0}} \phi(X_0+tX)e^{-2\pi\eta_0(X_0)} dX_0\right|\cdot \\
		& \qquad\qquad\qquad\qquad\qquad\qquad\qquad\qquad\qquad\qquad\qquad\qquad\qquad\qquad \left| e^{-2\pi i rt} \right| db~dt.
\end{align*}
The conjugation $C_b:G_0\to G_0, g_0\mapsto b^{-1}g_0b$ is a group automorphism and we know that $\chi_{l_0}\circ C_b = \chi_{\Ad^\ast(b)l_0}$ for the character $\chi_{l_0}$ such that $\sigma_{l_0}=\Ind_M^{G_0}(\chi_{l_0})$, $M=\exp(\fm)$ for a polarizing subalgebra $\fm\subset \fg_0$.
Now, $\Ad(b)\fm$ is a polarizing subalgebra for $\Ad^\ast(b)l_0$ and $C_b^{-1}M = \exp(\Ad(b)\fm)$.
Thus, \cite[Lemma 2.1.3]{corgre} gives us
\begin{align*}
\sigma_{\Ad^\ast(b)l_0}=\Ind_{C_b^{-1}M}^{G_0}(\chi_{l_0}\circ C_b) \cong \Ind_M^{G_0}(\chi_{l_0})\circ C_b = \sigma_{l_0}\circ C_b.
\end{align*}
We choose $\tilde U \subset \fg$ such that for all $\phi\in C_c^\infty(\tilde U)$ and $X_0+tX\in \tilde U$ we have $\supp(\phi(\bullet+tX))\subset U_0\subset\fg_0$, where $0\in U_0\subset\fg_0$ is given by the induction hypothesis for $G_0$. We apply the induction hypothesis for $G_0$ on the decay of the Fourier transform to $\sigma_{\Ad^*(b^{-1})l_0}$:
\begin{align*}
		J&(\phi,\eta) \leq \int_\RR \int_A \left|\int_{\fg_0} \langle\sigma_{l_0}(b\exp(X_0)b^{-1})u(be^{tX}),v(b)\rangle_{\cH_{l_0}} \phi(X_0+tX)e^{-2\pi\eta_0(X_0)} dX_0\right| db~dt \\
		& \overset{\text{(IH)}}{\leq} \int_\RR \int_A C_{n-1,N} \|\phi(\bullet+tX)\|_{W^{N+n-1,1}(\fg_0)}\|u(be^{tX})\|_{\cH_{l_0}} \|v(b)\|_{\cH_{l_0}} \cdot\\
		& \qquad\qquad\qquad\qquad\qquad\qquad\qquad\qquad\qquad\qquad\qquad\qquad \langle d(B_{\varepsilon\|\eta_0\|}(\eta_0),\cO_{\Ad^\ast(b^{-1})l_0}^{G_0})\rangle^{-N} ~db~dt  \\
		&\overset{(\ref{eq:d0})}{\leq} C_{n-1,N} \langle d(B_{\varepsilon\|\eta\|}(\eta),\cO_{l})\rangle^{-N} \int_\RR \left(\int_A  \|T_{\exp(tX)}u(b)\|_{\cH_{l_0}} \|v(b)\|_{\cH_{l_0}}~db\right)\|\phi(\bullet+tX)\|_{W^{N+n-1,1}}~dt  \\
		&\leq\; C_{n-1,N} \langle d(B_{\varepsilon\|\eta\|}(\eta),\cO_{l})\rangle^{-N} \int_\RR \|T_{\exp(tX)}u\|_{\cH_l} \|v\|_{\cH_l}\|\phi(\bullet+tX)\|_{W^{N+n,1}(\fg_0)}~dt,
	\end{align*}
	where $T_{\exp(tX)}$ is the translation by $\exp(tX)\in A$ which is an isometry on $\cH_l\cong L^2(A,\cH_{l_0})$. This gives us
	\begin{align*}
		J(\phi,\eta) & \leq\; C_{n,N} \langle d(B_{\varepsilon\|\eta\|}(\eta),\cO_{l})\rangle^{-N} \|u\|_{\cH_l}\|v\|_{\cH_l} \int_\RR \|\phi(\bullet+tX)\|_{W^{N+n,1}(\fg_0)}~dt \\
		& =\; C_{n,N} \langle d(B_{\varepsilon\|\eta\|}(\eta),\cO_{l})\rangle^{-N} \|u\|_{\cH_l}\|v\|_{\cH_l} \int_\RR \sum_{|\alpha|\leq N+n} \int_{\fg_0} |\partial_{X_0}^N \phi(X_0+tX)|dX_0~dt\\
		& \leq \; C_{n,N} \langle d(B_{\varepsilon\|\eta\|}(\eta),\cO_{l})\rangle^{-N} \|u\|_{\cH_l}\|v\|_{\cH_l} \|\phi\|_{W^{N+n,1}(\fg)}.
\end{align*}
Now let $\kappa=\beta^{-1}\circ \exp:B_1(0)\to\fg$ be the transition map. Then the integral we are interested in can be written as
\begin{align*}
F&:= \left| \int_{\fg} \langle \sigma_l(\exp(X))u,v\rangle_{\cH_l} \phi(X) e^{-2\pi\eta(X)}~dX  \right| \\
&= \left| \int_{\fg} \langle \sigma_l(\beta(\kappa(X)))u,v\rangle_{\cH_l} \chi(\kappa(X))\phi(X) e^{-2\pi\eta(X)}~dX  \right|,
\end{align*}
where $\chi\in C_c^\infty(\fg)$ is a cut-off function with $\chi=1$ on $\kappa(\supp(\phi))$.
The Fourier inversion formula yields
\begin{align*}
F &= \left| \int_{i\fg^\ast} \cF\left(\langle \sigma_l(\beta(\bullet))u,v\rangle_{\cH_l} \chi(\bullet)\right)(\xi)\int_{\fg}\phi(X)e^{-2\pi(\eta(X)-\xi(\kappa(X))}~dX ~d\xi\right|.
\end{align*}
Now, $\cF\left(\sigma_l(\beta(\bullet))u,v\rangle_{\cH_l} \chi(\bullet)\right)(\xi)= J(\chi,\xi)$ from above. We later want to apply the estimates for $J$ from above to $J(\chi, \xi)$ and thus require $\supp(\chi)\subset \tilde U$, where $\tilde U\subset \fg$ has be defined above. In order to make this possible we choose $U\subset \fg$ such that
\begin{itemize}
 \item $U \subset B_1(0)$ (this assures that $\kappa$ is well defined)
 \item $\kappa(\overline U) \subset \tilde U$
\end{itemize}

Furthermore, we can use non-stationary phase to estimate the inner integral
\begin{align*}
I(\phi, \xi,\eta):=\int_{\fg}\phi(X)e^{-2\pi(\eta(X)-\xi(\kappa(X))}~dX = \int_{\fg}\phi(X)e^{-2\pi d_\varepsilon(\eta(X)-\xi(\kappa(X))/d_\varepsilon}~dX,
\end{align*}
where $d_\varepsilon=d(B_{\varepsilon\|\xi\|}(\xi),\eta)>0$ is assumed.
With the phase function $f_{\xi,\eta}(X):=\frac 1{d_\varepsilon} (\eta(X)-\xi(\kappa(X))$ we have 
$d_X f_{\xi,\eta}(X):=\frac 1{d_\varepsilon} (\eta-\xi\circ D\kappa(X)))$ where $D\kappa(X):\fg\to\fg$ is the differential of $\kappa$ in $X$. Since $D\kappa(0)=1$ we have  
\begin{align*}
\|\xi\circ D\kappa(X)-\xi\|\leq \sup_{X\in U} \|D\kappa(X)-1\|\|\xi\|\leq \varepsilon\|\xi\|,
\end{align*}
after possibly shrinking the neighborhood $0\in U\subset \fg$. 
This gives us
\begin{align*}
\|\eta-\xi\circ D\kappa(X)\|\geq \|\eta-\xi\|-\varepsilon\|\xi\|=d_\varepsilon \quad \Rightarrow \quad
\left| d_X f_{\xi,\eta}(X) \right| \geq 1.
\end{align*}
With \cite[Theorem 7.7.1]{hoerm} we can estimate
\begin{align*}
|I(\phi, \xi,\eta)|\leq C_N\|\kappa\|_{C^{N+1}(B_1(0))} \|\phi\|_{W^{N,1}(\fg)} d(B_{\varepsilon\|\xi\|}(\xi),\eta)^{-N}.
\end{align*}
Note that Hörmander uses on the right hand side instead of the Sobolev norm of $\phi$ the term $\sum_{|\alpha|\leq N} \sup_X |D^\alpha\phi(X)|$. But when you take a closer look at his proof one finds that these suprema occur as an estimate of the integral of $\phi$. Hence, they can be replaced by the Sobolev norm.
Furthermore, by Lemma \ref{lem:Ckappa} we have $\|\kappa\|_{C^{N+1}}\leq C_{\fg,N}\leq C_{\fn,N}$ and therefore can be absorbed into the constant $C_N$ (since this may depend on $\fn$ in our statement).\\
In order to prove the desired estimate it suffices to prove it in the case that $d(B_{\varepsilon\|\eta\|}(\eta), \cO_l)>0$ which is equal to 
\begin{align}\label{eq:epseta<d}
\varepsilon\|\eta\|< d(\eta, \cO_l)
\end{align}
 and implies that $\frac 12 d:=\frac 12 d(\eta, \cO_l)<d(B_{\varepsilon/3\|\eta\|}(\eta), \cO_l)=:d_{\varepsilon/3}$.
 Now, we split up the integral:
\begin{align*}
F_I:= \int_{B_{1/2d_{\varepsilon/3}}(\eta)} J(\chi,\xi) \left|I(\phi,\xi,\eta)\right|~d\xi  , \quad
F_{II}:=  \int_{i\fg^\ast\setminus B_{d/4}(\eta)} J(\chi,\xi) \left| I(\phi,\xi,\eta)\right|~d\xi  .
\end{align*}
Since the two domains of integration are overlapping we have $F\leq F_I + F_{II}$.\\
With the estimates above (with $\frac \varepsilon 9$ instead of $\varepsilon$) we obtain
\begin{align*}
F_I &\leq C_{n,N}\sup_{\xi\in B_{1/2d_{\varepsilon/3}}(\eta)}\langle d(B_{\varepsilon/9\|\xi\|}(\xi),\cO_{l})\rangle^{-N} \|u\|_{\cH_l}\|v\|_{\cH_l} \|\phi\|_{W^{N,1}(\fg)} \|\chi\|_{W^{N+n,1}(\fg)} d_{\varepsilon/3}^n.
\end{align*}
For all $\xi\in B_R(\eta)$, $R=\frac 12 d_{\varepsilon/3}\leq \frac 12 d(\eta, \cO_l)$, we can estimate 
\begin{align}
d(B_{\varepsilon/9\|\xi\|}(\xi),\cO_{l}) &\geq d(\eta, \cO_l) -R -\frac{\varepsilon}9\|\xi\| \geq d(\eta, \cO_l)-(1+\varepsilon/9)R-\frac{\varepsilon}9\|\eta\| \\
&\geq \frac 13 (d(\eta, \cO_l)-\varepsilon/3\|\eta\|)=\frac 13 d(B_{\varepsilon/3\|\eta\|}(\eta), \cO_l). \nonumber
\end{align}
This gives us 
\begin{align*}
F_I &\leq C_{n,N}\langle d(B_{\varepsilon/3\|\eta\|}(\eta), \cO_l)\rangle^{-N+n} \|u\|_{\cH_l}\|v\|_{\cH_l} \|\phi\|_{W^{N,1}(\fg)} \\
&\leq C_{n,N}\langle d(B_{\varepsilon\|\eta\|}(\eta), \cO_l)\rangle^{-N+n} \|u\|_{\cH_l}\|v\|_{\cH_l} \|\phi\|_{W^{N,1}(\fg)},
\end{align*}
since $d(B_{\varepsilon\|\eta\|}(\eta), \cO_l)\leq d(B_{\varepsilon/3\|\eta\|}(\eta), \cO_l)$ and $-N+n<0$.\\
For the second part we use the above estimates again with $\frac \varepsilon 9$ instead of $\varepsilon$:
\begin{align*}
F_{II} &\leq C_N \|\phi\|_{W^{N,1}(\fg)} \|\chi\|_{W^{N,1}(\fg)}\|u\|_{\cH_l}\|v\|_{\cH_l}\int_{i\fg^\ast\setminus B(d/4,\eta)} d(B_{\varepsilon/9\|\xi\|}(\xi),\eta)^{-N}~d\xi.
\end{align*}
We estimate with $r=\|\xi-\eta\|\geq \frac 14 d(\eta,\cO_l)$ and $\varepsilon<1$
\begin{align*}
d(B_{\varepsilon/9\|\xi\|}(\xi),\eta) &= \|\xi-\eta\|-\frac\varepsilon 9\|\xi\| \geq \left(1-\frac\varepsilon 9\right)r-\frac\varepsilon 9 \|\eta\| \overset{\eqref{eq:epseta<d}}{\geq} \left(1-\frac\varepsilon 9\right)r - \frac 19 d(\eta,\cO_l) \\
&\geq \left( 1-\frac\varepsilon 9 -\frac 49 \right)r
\geq \frac 49 r
\end{align*}
and therefore with polar coordinates
\begin{align*}
F_{II} &\leq C_N \|\phi\|_{W^{N,1}(\fg)} \|u\|_{\cH_l}\|v\|_{\cH_l} \left(\frac 49\right)^{-N}\mathrm{vol(S^{n-1}})\int_{d/4}^\infty r^{-N+n-1}~dr \\
&=  C_N \|\phi\|_{W^{N,1}(\fg)} \|u\|_{\cH_l}\|v\|_{\cH_l} \left(\frac 49\right)^{-N}\mathrm{vol(S^{n-1}})\frac{1}{4^{N-n}} d(\eta,\cO_l)^{-N+n} \\
& \leq C_{n,N} \|\phi\|_{W^{N,1}(\fg)} \|u\|_{\cH_l}\|v\|_{\cH_l} d(B_{\varepsilon\|\eta\|}(\eta),\cO_l)^{-N+n},
\end{align*}
since $d(B_{\varepsilon\|\eta\|}(\eta),\cO_l)\leq d(\eta,\cO_l)$ and $-N+n<0$.
\end{proof}

\begin{corollary}\label{cor:deta0}
The statement of the previous Proposition \ref{prop:detaO} also holds for $u,v\in \cH_l^{\oplus m_l}$ with multiplicity $m_l\in\NN\cup\{\infty\}$.
\end{corollary}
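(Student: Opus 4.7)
The plan is to reduce the corollary to the single-copy case handled by Proposition~\ref{prop:detaO} by decomposing vectors in $\cH_l^{\oplus m_l}$ into their components. Concretely, for $u=(u_i)_i$ and $v=(v_i)_i$ in $\cH_l^{\oplus m_l}$ the matrix coefficient of $\sigma_l^{\oplus m_l}$ splits as
\begin{align*}
\langle \sigma_l^{\oplus m_l}(\exp X) u, v\rangle = \sum_i \langle \sigma_l(\exp X) u_i, v_i \rangle_{\cH_l},
\end{align*}
where the sum is at most countable. Integrating against $\phi(X)e^{-2\pi\eta(X)}$ and exchanging sum and integral (justified by absolute convergence, see below) yields
\begin{align*}
\int_\fg \langle \sigma_l^{\oplus m_l}(\exp X)u,v\rangle \phi(X)e^{-2\pi\eta(X)}\,dX = \sum_i \int_\fg \langle \sigma_l(\exp X)u_i,v_i\rangle \phi(X)e^{-2\pi\eta(X)}\,dX.
\end{align*}

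Next, I would apply Proposition~\ref{prop:detaO} to each summand with the same $\phi$, $\eta$, $l$ (all of which are independent of $i$), obtaining
\begin{align*}
\left|\int_\fg \langle \sigma_l(\exp X)u_i,v_i\rangle \phi(X)e^{-2\pi\eta(X)}\,dX\right| \leq C_{n,N}\|u_i\|\,\|v_i\|\,\|\phi\|_{W^{N+n,1}}\,\langle d(B_{\varepsilon\|\eta\|}(\eta),\cO_l)\rangle^{-N}.
\end{align*}
Summing over $i$ and applying the Cauchy--Schwarz inequality $\sum_i \|u_i\|\|v_i\| \leq \|u\|_{\cH_l^{\oplus m_l}}\|v\|_{\cH_l^{\oplus m_l}}$ yields exactly the estimate claimed in the corollary, with the same constant $C_{n,N}$ and the same neighborhood $U$. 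The Cauchy--Schwarz bound also confirms absolute convergence of the series and hence justifies the interchange of sum and integral via Fubini/dominated convergence.

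The argument requires essentially no new ideas; the main point to verify is measurability and integrability so that Fubini applies, which follows from Cauchy--Schwarz together with the finiteness of $\|u\|\|v\|$ in the direct sum. No additional obstacle arises for $m_l=\infty$ since the Hilbert-space direct sum is defined precisely by the square-summability condition that makes Cauchy--Schwarz applicable. Thus the extension from $\cH_l$ to $\cH_l^{\oplus m_l}$ is cost-free.
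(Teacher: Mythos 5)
Your argument is correct and coincides with the paper's own proof: componentwise splitting of the matrix coefficient, interchange of sum and integral justified by the bound $\|u_i\|\,\|v_i\|\,|\phi(X)|$ being integrable on $\NN\times\fg$, application of Proposition~\ref{prop:detaO} to each summand, and Cauchy--Schwarz to conclude. Nothing further is needed.
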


\begin{proof}
For $u\in \cH_l^{\oplus m_l}$ we have $u=(u_1,u_2,\ldots)$ with (finitely or infinitely many) $0\neq u_i\in\cH_l$ and $\sum_i \|u_i\|^2_{\cH_l} < \infty$, $\|u\|=\left(\sum_i \|u_i\|^2\right)^{1/2}$. Thus
\begin{align*}
	\big| \int_{\fg} \langle \sigma_l(\exp(X))u,v\rangle_{\cH_l}& \phi(X) e^{-2\pi\eta(X)}~dX  \big|
	 = \left| \int_{\fg}\sum_i \langle \sigma_l(\exp(X))u_i,v_i\rangle_{\cH_l} \phi(X) e^{-2\pi\eta(X)}~dX  \right| \\
	& = \left| \sum_i \int_{\fg} \langle \sigma_l(\exp(X))u_i,v_i\rangle_{\cH_l} \phi(X) e^{-2\pi\eta(X)}~dX  \right|  \\ 
	& \overset{\text{Prop. }\ref{prop:detaO}}{\leq}  C_{n,N} \|\phi\|_{W^{N+n,1}(\fg)} \langle d(B_{\varepsilon\|\eta\|}(\eta),\cO_l)\rangle^{-N} \sum_i \|u_i\|\cdot\|v_i\| \\
	& \leq  C_{n,N} \|\phi\|_{W^{N+n,1}(\fg)} \langle d(B_{\varepsilon\|\eta\|}(\eta),\cO_l)\rangle^{-N} \left(\sum_i \|u_i\|^2\right)^{1/2}\cdot \left(\sum_i\|v_i\|^2\right)^{1/2} \\
	& =  C_{n,N} \|\phi\|_{W^{N+n,1}(\fg)} \langle d(B_{\varepsilon\|\eta\|}(\eta),\cO_l)\rangle^{-N} \|u\|\cdot\|v\|,
\end{align*}
where the interchanging of the order of integration and summation in the second equality is possible since $|\langle \sigma_l(\exp(X))u_i,v_i\rangle \phi(X) e^{-2\pi\eta(X)}|\leq \|u_i\|\cdot\|v_i\|\cdot |\phi(X)|\in L^1(\NN\times \fg)$.
\end{proof}

This inequality whose constant is in particular independent of $l\in i\fg^\ast$ now helps us to estimate the matrix coefficients of the big unitary representation $\pi$ using its direct integral decomposition into the irreducibles $\sigma_l$.

\begin{theorem}\label{thm:WfInAcGen}
Let $G$ be a nilpotent, connected, simply connected Lie group with Lie algebra $\fg$ and $(\pi,\cH_\pi)$ a unitary representation of $G$. Then
\begin{align*}
\wf(\pi)\subset \ac(\cO-\supp\pi).
\end{align*}
\end{theorem}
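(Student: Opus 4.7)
The plan is to reduce the statement to Corollary~\ref{cor:deta0}, which already provides a microlocal decay estimate for each individual matrix coefficient $\langle \sigma_l(\exp X) u_l, v_l\rangle$ that is \emph{uniform} in $l\in i\fg^\ast$. The strategy is: take $\xi_0 \notin \ac(\cO-\supp\pi)$, verify a quantitative cone-separation property, apply Corollary~\ref{cor:deta0} fiberwise in the direct-integral decomposition \eqref{eq:Intdecomp}, and integrate using Cauchy--Schwarz. By the closedness of $\wf(\pi)$ and since $0\in\ac(\cO-\supp\pi)$, we may assume $\|\xi_0\|=1$.

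The first step is a purely geometric lemma on asymptotic cones. Since $\xi_0\notin\ac(\cO-\supp\pi)$, there exist an open cone $\mathcal{C}\ni \xi_0$ and $R>0$ with $\mathcal{C}\cap(\cO-\supp\pi)\subset B_R(0)$. A straightforward angular computation then shows that one can find $\varepsilon>0$, an open neighborhood $V\ni \xi_0$, and $c>0, \tau_0>0$, such that
\begin{equation*}
 d\bigl(B_{\varepsilon\|\tau\xi\|}(\tau\xi),\, \cO-\supp\pi\bigr)\;\geq\; c\tau
 \qquad\forall\, \xi\in V,\ \tau>\tau_0.
\end{equation*}
The argument splits the competing point $\zeta\in\cO-\supp\pi$ into two cases: either $\zeta$ lies in a slightly larger cone $\mathcal{C}$ and hence $\|\zeta\|<R$, making $\|\eta-\zeta\|$ essentially $\|\eta\|$; or $\zeta$ lies at definite angular distance from $\mathcal{C}$, in which case a standard spherical-chord estimate gives a linear-in-$\tau$ bound. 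This bound plainly implies the same bound with $\cO_l$ in place of $\cO-\supp\pi$ for each $l\in\supp(\mu_\pi)$, which is what is needed to feed into Corollary~\ref{cor:deta0}.

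The second step is to estimate the Fourier transform of the matrix coefficient $m^\pi_{u,v}$. After possibly shrinking $V$ and $\varepsilon$ to match the hypotheses of Proposition~\ref{prop:detaO}, let $U\subset\fg$ be the neighborhood of $0$ provided by that proposition (small enough that $\exp\colon U\to \exp(U)$ is a diffeomorphism). For $u,v\in\cH_\pi$ with fiberwise decompositions $u=\int^\oplus_{\Sigma_d} u_l\,d\mu_\pi(l)$, $v=\int^\oplus_{\Sigma_d} v_l\,d\mu_\pi(l)$ and $\phi\in C_c^\infty(U)$, Fubini (justified by the pointwise bound $|\langle\sigma_l(\exp X)u_l,v_l\rangle\phi(X)|\leq \|u_l\|\|v_l\||\phi(X)|$) gives
\begin{equation*}
 \mathcal F\bigl(\phi\cdot m^\pi_{u,v}\!\circ\exp\bigr)(\tau\xi)
 \;=\;\int_{\Sigma_d}\!\!\int_\fg \phi(X)\langle \sigma_l(\exp X)u_l,v_l\rangle e^{-2\pi\tau\xi(X)}dX\,d\mu_\pi(l).
\end{equation*}
Applying Corollary~\ref{cor:deta0} to the inner integral, using the geometric bound from Step~1 together with the monotonicity $\langle d(B_{\varepsilon\|\tau\xi\|}(\tau\xi),\cO_l)\rangle^{-N}\leq \langle c\tau\rangle^{-N}$, and finally Cauchy--Schwarz $\int\|u_l\|\|v_l\|\,d\mu_\pi(l)\leq \|u\|\|v\|$, one obtains
\begin{equation*}
 \bigl|\mathcal F\bigl(\phi\cdot m^\pi_{u,v}\!\circ\exp\bigr)(\tau\xi)\bigr|
 \;\leq\; C_{n,N}\,\|\phi\|_{W^{N+n,1}}\,\|u\|\|v\|\,\langle c\tau\rangle^{-N}
 \qquad \xi\in V,\ \tau>\tau_0.
\end{equation*}
By Definition~\ref{def:WFvs} (in the exponential chart) this shows $\xi_0\notin\wf_e(\langle\pi(\cdot)u,v\rangle)$; taking the union over $u,v$ and the closure yields $\xi_0\notin\wf(\pi)$.

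The main obstacle is really packaged into Corollary~\ref{cor:deta0} (and Proposition~\ref{prop:detaO}), which already does all the analytic work. What remains is the geometric Step~1, which is elementary but has to be executed carefully so that the lower bound on $d(B_{\varepsilon\|\tau\xi\|}(\tau\xi),\cO-\supp\pi)$ scales linearly in $\tau$—exactly what is needed to beat the Sobolev norm factor $\|\phi\|_{W^{N+n,1}}$ and obtain arbitrary polynomial decay; one must also be a little careful to choose the final $\varepsilon$ simultaneously small enough for the cone estimate and compatible with the hypothesis of Proposition~\ref{prop:detaO}, but this is just a matter of shrinking $\varepsilon$ at the end.
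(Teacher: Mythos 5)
Your proposal is correct and follows essentially the same route as the paper: use the definition of the asymptotic cone to get a linear-in-$\tau$ lower bound $d(B_{\varepsilon\tau}(\tau\xi),\cO_l)\gtrsim\tau$ uniformly over $l\in\supp(\mu_\pi)$, then apply Corollary~\ref{cor:deta0} fiberwise in the direct integral, interchange the integrals, and finish with Cauchy--Schwarz to bound $\int\|u_l\|\|v_l\|\,d\mu_\pi(l)\leq\|u\|\|v\|$. Your Step~1 merely spells out the uniformity in a neighborhood $V$ of $\xi_0$ more explicitly than the paper, which fixes a single direction $\eta$ and lets the ball radius $\varepsilon t$ in Proposition~\ref{prop:detaO} absorb nearby directions; this is a cosmetic, not substantive, difference.
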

\begin{proof}
Let $\eta\notin\ac(\cO-\supp\pi)$, w.l.o.g. $\|\eta\|=1$. Then there exist $\varepsilon>0$ and $t_0>0$ such that $d(t\eta,\cO-\supp\pi)\geq 2\varepsilon t$ for all $t\geq t_0$. In particular,  for all $l\in\supp\pi$ we know $d(t\eta,\cO_l)\geq 2\varepsilon t$ which implies $d(B_{\varepsilon t}(t\eta),\cO_l)\geq \varepsilon t$.\\
Again, we use $\cH_\pi=\int_{\Sigma_d} \cH_l^{\oplus m(\pi,\sigma_l)}~d\mu_\pi(l)$ for the Hilbert space of the unitary representation $\pi$.
	If $u=(u_l), v=(v_l)\in\cH$, $u_l,v_l\in\cH_l^{\oplus m(\pi,\sigma_l)}$, in this direct integral decomposition the matrix coefficient is
	\begin{eqnarray*}
		\langle \pi(g)u,v\rangle = \int_{\Sigma_d} \langle \sigma_l(g)u_l,v_l\rangle~d\mu_\pi(l).
	\end{eqnarray*}	
Let $U\subset \fg$ be the neighborhood of $0$ from Proposition~\ref{prop:detaO}/Corollary \ref{cor:deta0} with $\varepsilon$ as chosen above. We want to use Definition~\ref{def:WFvs} in order to estimate the Wavefront set and thus choose $\phi\in C_c^\infty(U)$ with $\phi(0)\neq 0$. For $t\geq t_0$ and $\varphi:=\phi\circ\log \in C_c^\infty(G)$, $\varphi(e)\neq 0$, we conclude
	\begin{align*}
		|\cF(\langle \pi(\bullet) u,v\rangle\varphi)(t\eta)| &= \left| \int_G \langle \pi( g)u,v\rangle \varphi(g) e^{-2\pi t\eta(\log g)}~dg\right| \\
			&= \left| \int_G \int_{\Sigma_d} \langle \sigma_l(g)u_l,v_l\rangle \varphi( g) e^{-2\pi t\eta(\log g)}~d\mu_\pi(l)~dg \right| \\
			&= \left| \int_{\Sigma_d} \left(\int_G \langle \sigma_l( g)u_l,v_l\rangle \phi(\log g) e^{-2\pi t\eta(\log g)}~dg\right)~d\mu_\pi(l) \right| \\
			& \leq \int_{\Sigma_d} \left| \int_{\fg} \langle \sigma_l(\exp(X))u_l,v_l\rangle \phi(X) e^{-2\pi t\eta(X)}~dX \right| ~d\mu_\pi(l) \\
			& \overset{\text{Cor. }\ref{cor:deta0}}{\leq }\int_{\Sigma_d}  C_{n,N}  \|u_l\|_{\cH_l} \|v_l\|_{\cH_l} \|\phi\|_{W^{N+n,1}(\fg)} \langle d(B_{\varepsilon t}(t\eta),\cO_l)\rangle^{-N}~d\mu_\pi(l) \\
			& \leq C_{n,N}  \|\phi\|_{W^{N,1}(\fg)} \varepsilon^{-N}t^{-N} \int_{\Sigma_d}\|u_l\|\cdot\|v_l\|~d\mu_\pi(l) \\
			& \leq C_{n,N}  \|\phi\|_{W^{N,1}(\fg)}\varepsilon^{-N}\|u\|_{\cH_{\pi}}\cdot\|v\|_{\cH_{\pi}}t^{-N}.
	\end{align*}
This implies $\eta\notin\wf_e(\langle\pi(\bullet)u,v\rangle)$.
\end{proof}
\section{Example: Restriction of discrete series of $\mathrm{SU}(1,2)$}
In this final section we want to illustrate how the main theorem can be used to obtain results for the restriction of discrete series representations of real reductive groups to nilpotent subgroups.
In order to keep the notation simple, we restrict to the simplest non-trivial case, i.e. $G=\mathrm{SU}(1,2)$ and $N\subset G$ the 3 dimensional Heisenberg group that appears as the unipotent radical of the parabolic subgroup.

Let us fix some notation that is necessary to state the result: In a first step we have to fix an Iwasawa decomposition. We consider
$\mathrm{SU}(1,2)$ as the group of complex $3\times3$ matrices preserving the Hermitian form $|z_1|^2 - |z_2|^2 - |z_3|^2$. We fix the Cartan involution on $\mathfrak{su}(1,2)$ to be $\theta: X\mapsto -X^*$ and obtain a Cartan decomposition $\mathfrak{su}(1,2) = \mathfrak k + \mathfrak p$ with
\[
 \mathfrak k = \left\{ \left(\begin{array}{cc} -\tr(C)&0\\
                             0&C
                            \end{array}\right),~~ C\in\mathfrak u (2)
\right\} \textup{ and } \mathfrak p = \left\{ \matz{0}{b^*}{b}{0}, b\in \mathbb C^2\right\}\cong \mathbb R^4
\]
and introduce the Lie algebra elements
\[
 H= \matd{0}{0}{1}{0}{0}{0}{1}{0}{0}, X_{\pm \alpha, 1} =  \matd{0}{1}{0}{1}{0}{\mp 1}{0}{\pm 1}{0} , X_{\pm \alpha, 2} =  \matd{0}{i}{0}{-i}{0}{\pm i}{0}{\pm i}{0}, X_{\pm 2\alpha} = \matd{\mp i}{0}{i}{0}{0}{0}{-i}{0}{\pm i}
\]
With this notation we fix a maximal abelian subspace $\mathfrak a = \mathbb R \cdot H\subset \mathfrak p$ and obtain the simple real roots 
$\pm \alpha (H) = \pm 1$ with two dimensional root spaces $\mathfrak g_{\pm\alpha} = \mathbb R\cdot X_{\pm\alpha, 1} +   \mathbb R\cdot X_{\pm\alpha,2}$ 
as well as the root $\pm 2\alpha(H) = \pm 2$ with one dimensional root spaces $\mathfrak g_{\pm2\alpha} = \mathbb R \cdot X_{\pm2\alpha}$. 
We thus obtain the Iwasawa decomposition $\mathfrak{su}(1,2) = \mathfrak k\oplus\mathfrak a\oplus\mathfrak n$ with $\mathfrak n = \mathbb RX_{\alpha,1}+\mathbb RX_{\alpha,2}+\mathbb R X_{2\alpha}$ being the Lie algebra of the 3 dimensional Heisenberg group with center $\mathbb RX_{2\alpha}$.
Let $\mathrm{SU}(1,2) = KAN$ be the corresponding Iwasawa decomposition on the group level with $N\subset \mathrm{SU}(1,2)$ the Heisenberg group corresponding to $\mathfrak n \subset \mathfrak {su}(1,2)$.
The irreducible representations and orbits of $N$ are then easily described:
All points in $\{\xi\in \mathfrak n^*, \xi(X_{2\alpha}) =0\}$ are zero dimensional orbits corresponding to the one-dimensional representations of the Heisenberg group and all $\mathcal O_h = \{\xi\in \mathfrak n^*, \xi(X_{2\alpha}) =h \}, h\in \mathbb R\setminus\{0\}$ are coadjoint orbits that correspond to the Schrödinger representations $\rho_h$ (cf. \cite{corgre}).

Let us introduce the necessary notation in order to describe the three different types of discrete series for $SU(1,2)$, namely the holomorphic, anti holomorphic and non holomorphic discrete series. They are distinguished by their \emph{Harish Chandra parameter} which we want to introduce next. First note that $\mathfrak{su}(1,2)$ fulfills the rank condition $\mathrm{rk} (\mathfrak u(2)) = \mathrm{rk} (\mathfrak {su}(1,2)) =2$ thus we can choose $\mathfrak h \subset \mathfrak k$ which is a Cartan subalgebra in $\mathfrak k_{\mathbb C}$ and $\mathfrak g_{\mathbb C}$ at the same time. Let us denote
\[
 T_1 = \matd{-i}{0}{0}{0}{i}{0}{0}{0}{0} \textup{ and } T_2 = \matd{-i}{0}{0}{0}{0}{0}{0}{0}{i}
\]
and fix the nondegenerate $\mathrm{Ad}$-invariant scalar product on $\mathfrak{su}(1,2)$ to be $\langle X, Y\rangle := -\tr(XY)$ such that $\langle T_{1/2},T_{1/2}\rangle = 2$ and $\langle T_1, T_2\rangle = 1$, then one calculates that the roots on $\mathfrak g_{\mathbb C}$ are (modulo a factor of i) given by:
\[
 \pm \alpha_1 = \langle T_1,\bullet\rangle, \pm \alpha_2 = \langle T_2,\bullet\rangle \text{ and } \pm \alpha_3 = \langle T_2-T_1,\bullet\rangle.
\]
Note that $i\alpha_3$ is a compact root (i.e. a root also appearing of the $\mathfrak h$ action on $\mathfrak k$) and $\Delta_+ =\{\alpha_1, \alpha_2, \alpha_3\}$ is a choice of positive roots that is called \emph{good} in the sense that compact roots are never bigger then non-compact ones (cf. \cite[Chapter VI]{Kna86}). The discrete series of semisimple Lie groups are parametrized by their Harish Chandra parameter $\Lambda \in \mathfrak h^*$ and one has the following distinction between different types of discrete series: A discrete series is called \emph{holomorphic discrete series} if $\langle \Lambda, \alpha_i\rangle\geq 0$ for all non-compact roots, it is called \emph{anti-holomorphic} if $\langle \Lambda, \alpha_i\rangle\leq 0$  for all noncompact roots and \emph{non-holomorphic discrete series} else \cite[Chapter VI\&IX]{Kna86} see Figure~\ref{fig:roots} for a visualization).

\begin{figure}[ht] \centering
\includegraphics[width=.3\textwidth]{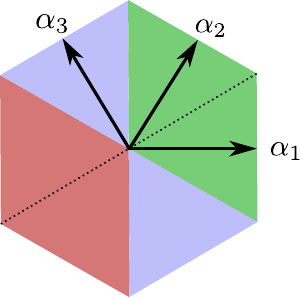}
\caption{Visualization of the Cartan subalgebra $\mathfrak h$: The compact root is given by $\alpha_3$ and the dashed line represents the reflection hyperplane of the compact Weyl group. The Harish Chandra parameter of the holomorphic discrete series lie in the green region, those of the anti-holomorphic discrete series in the red region and the non-holomorphic ones in the blue region.}
\label{fig:roots}
\end{figure}
We can now state the result on the restriction of discrete series.
\begin{proposition}
 Let $\pi$ be a discrete series of $SU(1,2)$ then one has
 \begin{itemize}
  \item $\{\xi\in\mathfrak n^*, \xi(X_{2\alpha})\geq 0 \}\subset \ac(\cO-\supp \pi_{|N}) $ if $\pi$ is a holomorphic discrete series
  \item $\{\xi\in\mathfrak n^*, \xi(X_{2\alpha})\leq 0 \}\subset\ac(\cO-\supp \pi_{|N}) $ if $\pi$ is a anti-holomorphic discrete series
  \item $\ac(\cO-\supp \pi_{|N}) = \mathfrak n^*$ if $\pi$ is a non-holomorphic discrete series
 \end{itemize}
\end{proposition}
Note that with completely different methods the restriction of discrete series of $SU(2,1)$ to maximal solvable groups $AN$ has been studied in \cite{Liu12}.
\begin{proof}
The Rossmann orbit of a discrete series representation is precisely the coadjoint orbit $\mathrm{Ad}^*(G)\Lambda\subset \mathfrak g^*$ through the Harish-Chandra parameter $\Lambda \in \mathfrak h^*$ \cite{Ros78}, i.e. by \cite[Theorem 1.2]{harrisheolaf} we know $\mathrm{WF}(\pi) = \mathrm{AC}(\mathcal \mathrm{Ad}^*(G)\Lambda)$. 
Furthermore, in complete generality, we know that for a closed subgroup $H\subset G$ one has $\mathrm{WF}(\pi_{|H}) \supset \mathrm{pr}_{\mathfrak g^*\to\mathfrak h^*} WF(\pi)$ \cite[Proposition 1.5]{howe}. 
One is thus left with the exercise of calculating $\mathrm{pr}_{\mathfrak g\to\mathfrak n} (\mathrm{Ad}^*(G)\Lambda)$ or (as the coadjoint orbits in $N$ are determined by the evaluation at $X_{2\alpha}$ that spans the center of $N$) it is enough to calculate the range of $G\ni g\mapsto \langle \mathrm{Ad}^*(g) \Lambda,X_{2\alpha}\rangle \subset \mathbb R$. 
The latter can easily be achieved by considering the Iwasawa decomposition $G=NAK$. 
As $X_{2\alpha}$ is in the center of $N$ and by definition of the root spaces we have 
$\langle\mathrm{Ad}^*(nak)\Lambda,X_{2\alpha}\rangle =e^{2\alpha\log a} \langle\mathrm{Ad}^*(k)\Lambda,X_{2\alpha}\rangle$, thus it remains to calculate the sign of $\langle\mathrm{Ad}^*(k)\Lambda,X_{2\alpha}\rangle$.
Using the $\theta$ invariance of $\mathfrak k$ and $\langle\bullet,\bullet\rangle$ and the definition of $\alpha_2$ we get
\[
\langle\mathrm{Ad}^*(k)\Lambda,X_{2\alpha}\rangle = \langle\mathrm{Ad}^*(k)\Lambda,\frac12(X_{2\alpha}+\theta X_{2\alpha})\rangle=\langle\mathrm{Ad}^*(k)\Lambda,T_2\rangle
\]
Now note that the Weyl group of $K$ precisely swaps $\alpha_1$ and $\alpha_2$ and it follows from Kostants convexity theorem \cite{Kos73} that the projection of a coadjoint orbit in a compact Lie algebra to a Cartan subgroup is a Weyl group invariant polytope.
Thus $\langle\mathrm{Ad}^*(k)\Lambda,X_{2\alpha}\rangle\geq 0$ iff $\langle\Lambda, \alpha_1\rangle\geq 0$ and $\langle\Lambda, \alpha_2\rangle \geq 0 $ and
$\langle\mathrm{Ad}^*(k)\Lambda,X_{2\alpha}\rangle\leq 0$ iff $\langle\Lambda, \alpha_1\rangle\leq 0$ and $\langle\Lambda, \alpha_2\rangle\leq 0$.
This shows, that
\begin{itemize}
 \item $\overline{\mathrm{pr}_{\mathfrak g\to\mathfrak n} (\mathrm{Ad}^*(G)\Lambda)} = \{\xi\in\mathfrak n^*, \xi(X_{2\alpha})\geq 0 \}$ if $\pi$ is a holomorphic discrete series
 \item $\overline{\mathrm{pr}_{\mathfrak g\to\mathfrak n} (\mathrm{Ad}^*(G)\Lambda)} = \{\xi\in\mathfrak n^*, \xi(X_{2\alpha})\leq 0 \}$ if $\pi$ is a anti-holomorphic discrete
 series
 \item $\overline{\mathrm{pr}_{\mathfrak g\to\mathfrak n} (\mathrm{Ad}^*(G)\Lambda)} = \mathfrak n$ if $\pi$ is a non-holomorphic discrete series.
\end{itemize}

\end{proof}
\providecommand{\bysame}{\leavevmode\hbox to3em{\hrulefill}\thinspace}
\providecommand{\MR}{\relax\ifhmode\unskip\space\fi MR }
\providecommand{\MRhref}[2]{%
  \href{http://www.ams.org/mathscinet-getitem?mr=#1}{#2}
}
\providecommand{\href}[2]{#2}

\end{document}